\numberwithin{equation}{section}
\titleformat{\section}[block]{\bfseries\filcenter}% centred title 
{{\upshape\thesection\enspace}}{.5em}{}
\titleformat{\subsection}[block]{\filcenter}% upshape means that is not in italics
{{\upshape\thesubsection\enspace}}{.5em}{} %%^: These reduce the amount of space taken up by section and subsection
\setlist{nosep}  % nosep= no vertical separation between items.
\newcommand{\N}{\mathbb{N}}     % Natural numbers
\newcommand{\R}{\mathbb{R}}     % Real numbers
\newcommand{\C}{\mathbb{C}}     % Complex numbers
\newcommand{\Prob}{\mathbb{P}}  % Probability measure
\newcommand{\Exp}{\mathbb{E}}   % Expectation 
\newcommand{\st}{\,:\,}         % Symbol for "such that"
\newcommand{\goth}[1]{\mathfrak{#1}} % Gothic letters 
\newcommand{\ind}[2]{\mathbbm{1}_{#1}\left( #2 \right)}          % Indicator function, #1=set, #2=variable
\newcommand{\norm}[1]{\left|\left|#1\right|\right|}              % Vector norm
\newcommand{\triplet}[3]{\left( #1, #2, #3 \right) }             % General triplet e.g. a probability space
\newcommand{\ProbSpace}{\triplet{\Omega}{\mathscr{F}}{\Prob}}    % Triplet of a Probability Space
\newcommand{\abs}[1]{\left| #1 \right|}                          % Absolute value  
\renewcommand{\qedsymbol}{$\square$}                       % For a black square at the end of a proof
\newcommand{\defeq}{\mathrel{\mathop:}=}                         % Defined as equal to symbol
\newcommand\restr[2]{{% we make the whole thing an ordinary symbol
  \left.\kern-\nulldelimiterspace % automatically resize the bar with \right
  #1 % the function
  \vphantom{\big|} % pretend it's a little taller at normal size
  \right|_{#2} % this is the delimiter
  }}
\theoremstyle{plain} 
\newtheorem{theo}{Theorem}[section]    
\newtheorem{prop}[theo]{Proposition} 
\newtheorem{coro}[theo]{Corollary}
\newtheorem{lemm}[theo]{Lemma}
\newtheorem{assu}[theo]{Assumption}
\theoremstyle{definition} 
\newtheorem{defi}[theo]{Definition}
\newtheorem{note}[theo]{Note}
\newtheorem{rema}[theo]{Remark}
\declaretheoremstyle[%
  spaceabove=-5pt,%
  spacebelow=6pt,%
  headfont=\normalfont\itshape,%
  postheadspace=1em,%
  qed=\qedsymbol%
]{mystyle} 
\declaretheorem[name={Proof},style=mystyle,unnumbered,
]{prf}
\begin{document}

 \title{Tightness and Weak Convergence of Probabilities on the Skorokhod Space on the Dual of a Nuclear Space and Applications.}
 
\author{C. A. Fonseca-Mora\\
Escuela de Matem\'{a}tica\\
Universidad de Costa Rica\\
San Jos\'{e}, 11501-2060, Costa Rica\\
E-mail:  christianandres.fonseca@ucr.ac.cr }

\date{}

\maketitle

\emph{2010 Mathematics Subject Classification:} 60B10, 60B12, 60F17, 60G17.\\
\emph{Key words and phrases:} Skorokhod topology; uniform tightness; weak convergence; cylindrical measures; duals of nuclear spaces.

\begin{abstract}
Let $\Phi'_{\beta}$ denote the strong dual of a nuclear space $\Phi$ and let  $D_{T}(\Phi'_{\beta})$ be the Skorokhod space of right-continuous with left limits (c\`{a}dl\`{a}g) functions from $[0,T]$ into $\Phi'_{\beta}$. In this article we  introduce the concepts of cylindrical random variables and cylindrical measures on $D_{T}(\Phi'_{\beta})$, and prove analogues of the regularization theorem and Minlos theorem for extensions of these objects to bona fide random variables and probability measures on $D_{T}(\Phi'_{\beta})$. Later, we establish analogues of L\'{e}vy's continuity theorem to provide necessary and sufficient conditions for tightness of a family of probability measures on $D_{T}(\Phi'_{\beta})$ and sufficient conditions for weak convergence of a sequence of probability measures on $D_{T}(\Phi'_{\beta})$. Extensions of the above results to the space $D_{\infty}(\Phi'_{\beta})$ of c\`{a}dl\`{a}g functions from $[0,\infty)$ into $\Phi'_{\beta}$  are also given.  Afterwards, we apply our results to  the study of weak convergence of $\Phi'_{\beta}$-valued c\`{a}dl\`{a}g processes and in particular to L\'{e}vy processes. We finalize with an application of our theory to the study of tightness and weak convergence of probability measures on the Skorokhod space $D_{\infty}(H)$ where $H$ is a Hilbert space.
\end{abstract}

\section{Introduction}

Let $E$ be a topological space and let  $D_{T}(E)$ denote the collection of all right-continuous with left limits (c\`{a}dl\`{a}g) maps $x: [0,T] \rightarrow E$. For the case of $E$ being a separable metric space, Skorokhod introduced in \cite{Skorokhod:1956} four topologies on the space $D_{T}(E)$, being the $J1$ topology the most widely used. 

Under the assumption that $\Phi$ is a Fr\'{e}chet nuclear space with strong dual $\Phi'_{\beta}$, Mitoma \cite{Mitoma:1983} introduced the Skorokhod $J1$ topology on $D_{T}(\Phi'_{\beta})$ and provided characterizations for compact subsets on it. Mitoma also introduced sufficient conditions for uniform tightness and weak convergence of sequences of probability measures on $D_{T}(\Phi'_{\beta})$ in terms of uniform tightness and weak convergence of their finite dimensional projections. The work of Mitoma was latter extended by Fouque \cite{Fouque:1984} to the cases when $\Phi$ is either a countable inductive limit of Fr\'{e}chet nuclear spaces or the strong dual of a Fr\'{e}chet nuclear space. 

A further extension of the work of Mitoma to $D_{T}(E)$, where $E$ is a completely regular space was carried out by Jakubowski \cite{Jakubowski:1986}. In this work, Jakubowski assume that $E$ has metrizable compacts and that  $\{ \mu_{i}\}$ is a family of probability measures on $D_{T}(E)$ satisfying the compact containtment condition and  such that $\mu_{i} \circ f^{-1}$ is uniformly tight on $D_{1}(\R)$ for a set $\mathbbm{F}$ of continuous functions $f: D_{1}(E) \rightarrow D_{1}(\R)$ that satisfy certain conditions. In a recent work and under the same assumptions on $E$, Kouritzin \cite{Kouritzin:2016} introduces  characterizations of uniform tightness under the compact containment condition and under (several equivalent) modulus of continuity conditions. 

The main objective of this article is to provide sufficient and necessary conditions for tightness and weak convergence of random objects on $D_{T}(\Phi'_{\beta})$, where  $\Phi'_{\beta}$ is the strong dual of a general nuclear space $\Phi$, or more generally when $\Phi$ is a Hausdorff locally convex space. We do this by studying properties of the Fourier transforms of these random objects and by proving analogues of Minlos theorem and L\'{e}vy's continuity theorem on $D_{T}(\Phi'_{\beta})$. 

Our motivation is twofold. First, since the pionering work of Mitoma many applications emerged, as are for example  \cite{DawsonVaillancourtWang:2000, DeMasiGalvesLocherbachPresutti:2015, FernandezGorostiza:1992, HaademProske:2014, KaspiRamanan:2013, PerezAbreuTudor:1992, ReedTalreja:2015}, just to cite some of them. Since we are considering general nuclear spaces, we hope that with our work more applications will appear, especially for modelling of random phenomena taking values on other examples of nuclear spaces not covered by the works of Mitoma and Fouque (see Sect.  \ref{sectionExampCommen}). Second, in \cite{FonsecaMora:2018-1} a new theory of stochastic integration and stochastic PDE's in $\Phi'_{\beta}$ driven by L\'{e}vy noise has been introduced. Much of the work on this article is motivated to show convergence of solutions of these stochastic PDE's. The results will appear elsewhere.  

We now give a description of our work. Our first task is to characterize the compact subsets of $D_{T}(\Phi'_{\beta})$. We show that under the assumption that $\Phi$ is a barrelled nuclear space, then for a set $A \subseteq D_{T}(\Phi'_{\beta})$  compactness of finite dimensional projections of $A$ implies compactness of $A$ in $D_{T}(\Phi'_{\beta})$ (Theorem \ref{theoCharacCompacSets}). This extends previous results obtained by Mitoma \cite{Mitoma:1983}. 

Later, we introduce the concepts of cylindrical measures and cylindrical random variables on $D_{T}(\Phi'_{\beta})$ by considering the space-time algebra of cylindrical subsets of $D_{T}(\Phi'_{\beta})$. Here it is important to stress the fact that our definitions are not a particular case of the usual theory of cylindrical measures and cylindrical random variables on locally convex spaces as it is well-known that  $D_{T}(\Phi'_{\beta})$ is not a topological vector space. Here, we show an extended version of the regularization theorem given in \cite{FonsecaMora:2018} that says that if $\{ X_{t} \}_{t \in [0,T]}$ is a cylindrical process in $\Phi'$ wherein  the maps $X_{t}: \Phi \rightarrow L^{0} \ProbSpace$ are equicontinuous at the origin, then this cylindrical process has an extension to a $D_{T}(\Phi'_{\beta})$-valued random variable with a Radon probability distribution (Theorem \ref{theoRegulaTheoSkorokSpace}). We also show an extension of Minlos theorem (Theorem \ref{theoMinlosSkorokhodSpace}) that states that a cylindrical measure on $D_{T}(\Phi'_{\beta})$ that has equicontinuous Fourier transforms for its time projections has a Radon measure extension on $D_{T}(\Phi'_{\beta})$. 

Afterwards, we move to the core of this article that consists in establishing necessary and sufficient conditions for a family of probability measures $\{ \mu_{\alpha}: \alpha \in A\}$ on $D_{T}(\Phi'_{\beta})$ to be uniformly tight (Theorem \ref{theoThighnessMeasures}). In particular, we show that if the finite dimensional projections of the measures are uniformly tight and if the Fourier transforms of the time projections of the measures are equicontinuous at the origin, then $\{ \mu_{\alpha}: \alpha \in A\}$ is uniformly tight on $D_{T}(\Phi'_{\beta})$. Observe that contrary to  
\cite{Jakubowski:1986, Kouritzin:2016} we do not assume that the compact subsets of $\Phi'_{\beta}$ are metrizable nor that the compact containtment conditions holds. Furthermore, we show that if the space $\Phi$ is also ultrabornological then only the uniform tightness of finite dimensional projections needs to be assumed. We extend our results to the space 
$D_{\infty}(\Phi'_{\beta})$ of c\`{a}dl\`{a}g mappings from $[0,\infty)$ into $\Phi'_{\beta}$. Our results generalize those obtained by  Mitoma \cite{Mitoma:1983} and Fouque \cite{Fouque:1984}. 

At the center of our arguments is the idea of using equicontinuity of Fourier transforms of the time projections of the measures $\{\mu_{\alpha}: \alpha \in A\}$ on $D_{T}(\Phi'_{\beta})$ to set the problem on the space $D_{T}( (\widetilde{\Phi_{\theta}})'_{\beta})$ equipped with its Skorokhod topology, where 
$\widetilde{\Phi_{\theta}}$ denote the completion of the space $\Phi$ equipped with a weaker (with respect to the nuclear topology on $\Phi$) countably Hilbertian topology $\theta$. The advantage of using this methodology is that the space $\widetilde{\Phi_{\theta}}$ is a complete, separable, pseudo-metrizable space (not necessarily nuclear), hence linear operators and measures defined on $\widetilde{\Phi_{\theta}}$ have better properties than on $\Phi$. 
Previously, we have used this tool in \cite{FonsecaMora:2018} to prove existence of continuous or c\`{a}dl\`{a}g versions to cylindrical processes in $\Phi'$. 

Our next goal is to provide sufficient conditions for weak convergence of probability measures on $D_{\infty}(\Phi'_{\beta})$ and of  $\Phi'_{\beta}$-valued c\`{a}dl\`{a}g processes (Theorems \ref{theoWeakConveMeasures} and \ref{theoWeakConvProcesses}). Again our results generalize those obtained in \cite{Fouque:1984, Mitoma:1983}, and furthermore we have considered the completely new case of convergence of cylindrical processes in $\Phi'$. Applications are then given to weak convergence in $D_{\infty}(\Phi'_{\beta})$ for a sequence of $\Phi'_{\beta}$-valued L\'{e}vy processes in terms of properties of the characteristics of their L\'{e}vy-Khintchine formula (Theorem \ref{theoWeakConvLevyProcess}). 

Finally, under the assumption that $\Phi$ is a (Hausdorff) locally convex space and by considering its Sazonov topology, we indicate how our methods for the nuclear space setting extends to provide sufficient conditions for uniform tightness and weak convergence of probability measures on $D_{\infty}(\Phi'_{\beta})$ (Theorems \ref{theoUniformTightnessInftylocalConvex} and \ref{theoWeakConveMeasureslocallyConvex}). A particular case of great importance is when $H$ is a Hilbert space because in that case our result represents an extension of Sazonov's theorem and L\'{e}vy's continuity theorem  to the space $D_{\infty}(H)$ (Theorems \ref{theoSazonovSkorokhodHilbertSpace} and \ref{theoLevyTheoremSkorokhodHilbertSpace}). We hope that these results could generate new applications, especially for the Hilbert space setting. 

The organization of the paper is the following. In Sect. \ref{sectionPrelim} we list some important notions on nuclear spaces and their duals, and also properties of cylindrical measures and cylindrical processes in duals of nuclear spaces. 
The Skorokhod topology on $D_{T}(\Phi'_{\beta})$ is introduced in Sect. \ref{sectionSkoSpac} and  characterizations for its compact subsets are given.
In Sect. \ref{sectionMeRVSkoSpac} we introduce the concepts of cylindrical measures and cylindrical random variables in $D_{T}(\Phi'_{\beta})$ and show the regularization and Minlos theorems in $D_{T}(\Phi'_{\beta})$. Later, in Sect. \ref{sectionTightSkorSpa} we study the uniform tightness of probability measures on $D_{T}(\Phi'_{\beta})$ and on $D_{\infty}(\Phi'_{\beta})$. In Sect. \ref{sectionWeaConvSkoSpa} we prove a L\'{e}vy's continuity theorem for the weak convergence of probability measures and stochastic processes in $D_{\infty}(\Phi'_{\beta})$. Afterwards, in Sect. \ref{sectionWeakConvLevy} we apply our results to characterize weak convergence in $D_{\infty}(\Phi'_{\beta})$ of a sequence of L\'{e}vy processes. In  Sect. \ref{sectionApplSkoLocalConv} we show how our results for the dual of a nuclear space setting extends to the case when $\Phi$ is a locally convex space. Finally, in Sect.  \ref{sectionExampCommen} we consider concrete examples of nuclear spaces,  give some remarks, and compare our results with those on the literature.

%A criteria for weak convergence of a tight sequence of $\mathscr{S}'$-valued c\`{a}dl\`{a}g processes not in terms of weak convergence of finite dimensional distributions \cite{BojdeckiGorostizaRamaswamy:1986}.

\section{Preliminaries}\label{sectionPrelim}

\subsection{Nuclear Spaces And Their Strong Duals} \label{subsectionNuclSpace}

In this section we introduce our notation and review some of the key concepts on nuclear spaces and their dual spaces that we will need throughout this paper. For more information see \cite{Schaefer, Treves}. Only vector spaces over $\R$ will be considered.  

A locally convex space is called \emph{quasi-complete} if each of its bounded and closed subsets are complete. A \emph{barrelled} space is a locally convex space for which  every lower semicontinuous seminorm on it is continuous. A locally convex space that is the inductive limit of a family of normed (respectively Banach) spaces is called a \emph{bornological} (respectively \emph{ultrabornological}) space. 

Let $\Phi$ be a locally convex space. If $p$ is a continuous seminorm on $\Phi$ and $r>0$, the closed ball of radius $r$ of $p$ given by $B_{p}(r) = \left\{ \phi \in \Phi: p(\phi) \leq r \right\}$ is a closed, convex, balanced neighborhood of zero in $\Phi$. A continuous seminorm (respectively a norm) $p$ on $\Phi$ is called \emph{Hilbertian} if $p(\phi)^{2}=Q(\phi,\phi)$, for all $\phi \in \Phi$, where $Q$ is a symmetric, non-negative bilinear form (respectively inner product) on $\Phi \times \Phi$. Let $\Phi_{p}$ be the Hilbert space that corresponds to the completion of the pre-Hilbert space $(\Phi / \mbox{ker}(p), \tilde{p})$, where $\tilde{p}(\phi+\mbox{ker}(p))=p(\phi)$ for each $\phi \in \Phi$. The quotient map $\Phi \rightarrow \Phi / \mbox{ker}(p)$ has an unique continuous linear extension $i_{p}:\Phi \rightarrow \Phi_{p}$.   

Let $q$ be another continuous Hilbertian seminorm on $\Phi$ for which $p \leq q$. In this case, $\mbox{ker}(q) \subseteq \mbox{ker}(p)$. Moreover, the inclusion map from $\Phi / \mbox{ker}(q)$ into $\Phi / \mbox{ker}(p)$ is linear and continuous, and therefore it has a unique continuous extension $i_{p,q}:\Phi_{q} \rightarrow \Phi_{p}$. Furthermore, we have the following relation: $i_{p}=i_{p,q} \circ i_{q}$. 

We denote by $\Phi'$ the topological dual of $\Phi$ and by $f[\phi]$ the canonical pairing of elements $f \in \Phi'$, $\phi \in \Phi$. We denote by $\Phi'_{\beta}$ the dual space $\Phi'$ equipped with its \emph{strong topology} $\beta$, i.e. $\beta$ is the topology on $\Phi'$ generated by the family of seminorms $\{ \eta_{B} \}$, where for each $B \subseteq \Phi$ bounded we have $\eta_{B}(f)=\sup \{ \abs{f[\phi]}: \phi \in B \}$ for all $f \in \Phi'$.  If $p$ is a continuous Hilbertian seminorm on $\Phi$, then we denote by $\Phi'_{p}$ the Hilbert space dual to $\Phi_{p}$. The dual norm $p'$ on $\Phi'_{p}$ is given by $p'(f)=\sup \{ \abs{f[\phi]}:  \phi \in B_{p}(1) \}$ for all $ f \in \Phi'_{p}$. Moreover, the dual operator $i_{p}'$ corresponds to the canonical inclusion from $\Phi'_{p}$ into $\Phi'_{\beta}$ and it is linear and continuous. 

Let $p$ and $q$ be continuous Hilbertian seminorms on $\Phi$ such that $p \leq q$.
The space of continuous linear operators (respectively Hilbert-Schmidt operators) from $\Phi_{q}$ into $\Phi_{p}$ is denoted by $\mathcal{L}(\Phi_{q},\Phi_{p})$ (respectively $\mathcal{L}_{2}(\Phi_{q},\Phi_{p})$) and the operator norm (respectively Hilbert-Schmidt norm) is denoted by $\norm{\cdot}_{\mathcal{L}(\Phi_{q},\Phi_{p})}$ (respectively $\norm{\cdot}_{\mathcal{L}_{2}(\Phi_{q},\Phi_{p})}$). We employ an analogous notation for operators between the dual spaces $\Phi'_{p}$ and $\Phi'_{q}$. 
  
Let us recall that a (Hausdorff) locally convex space $(\Phi,\mathcal{T})$ is called \emph{nuclear} if its topology $\mathcal{T}$ is generated by a family $P$ of Hilbertian seminorms such that for each $p \in P$ there exists $q \in P$, satisfying $p \leq q$ and the canonical inclusion $i_{p,q}: \Phi_{q} \rightarrow \Phi_{p}$ is Hilbert-Schmidt. Other equivalent definitions of nuclear spaces can be found in \cite{Pietsch, Treves}. 

Let $\Phi$ be a nuclear space. If $p$ is a continuous Hilbertian seminorm  on $\Phi$, then the Hilbert space $\Phi_{p}$ is separable (see \cite{Pietsch}, Proposition 4.4.9 and Theorem 4.4.10, p.82). Now, let $\{ p_{n} \}_{n \in \N}$ be an increasing sequence of continuous Hilbertian seminorms on $\Phi$. We denote by $\theta$ the locally convex topology on $\Phi$ generated by the family $\{ p_{n} \}_{n \in \N}$. The topology $\theta$ is weaker than the nuclear topology on $\Phi$. We  will call $\theta$ a \emph{weaker countably Hilbertian topology} on $\Phi$ and we denote by $\Phi_{\theta}$ the space $(\Phi,\theta)$ and by $\widetilde{\Phi_{\theta}}$ its completion. The space $\widetilde{\Phi_{\theta}}$ is a separable, complete, pseudo-metrizable (hence Baire) locally convex space (see \cite{FonsecaMora:2018}, Proposition 2.4). Moreover, the space $\widetilde{\Phi_{\theta}}$ is ultrabornological because is bornological and complete (see Example 13.2.8(b) and Theorem 13.2.12 in \cite{NariciBeckenstein}, p.445, 449).

\subsection{Cylindrical and Stochastic Processes} \label{subSectionCylAndStocProcess}

Let $E$ be a topological space and denote by $\mathcal{B}(E)$ its Borel $\sigma$-algebra. Recall that a Borel measure $\mu$ on $E$ is called a \emph{Radon measure} if for every $\Gamma \in \mathcal{B}(E)$ and $\epsilon >0$, there exist a compact set $K \subseteq \Gamma$ such that $\mu(\Gamma \backslash K) < \epsilon$. In general not every Borel measure on $E$ is Radon.
We denote by $\goth{M}_{R}^{b}(E)$ and by $\goth{M}_{R}^{1}(E)$ the spaces of all bounded Radon measures and of all Radon probability measures on $E$.  A subset $M \subseteq \goth{M}_{R}^{b}(E)$ is called \emph{uniformly tight} if \begin{inparaenum}[(i)] 
\item $\sup \{ \mu(E) \st \mu \in M \} < \infty$, and \item for every $\epsilon >0$ there exists a compact set $K \subseteq E$ such that $\mu (K^{c})< \epsilon$ for all $\mu \in M$. 
\end{inparaenum} A sequence $(\mu_{n}:n \in \N) \subseteq \goth{M}_{R}^{1}(E)$ \emph{converges weakly} to $\mu \in \goth{M}_{R}^{1}(E)$ if $\int_{E} f d\mu_{n} \rightarrow \int_{E} f d\mu$ for every $f \in C_{b}(E)$; we write $\mu_{n} \Rightarrow \mu$.  

Let $\Phi$ be locally convex. Given $M \subseteq \Phi$, the cylindrical algebra on $\Phi'$ based on $M$ is the collection $\mathcal{Z}(\Phi',M)$ of all the \emph{cylindrical sets} of the form $\mathcal{Z}\left(\phi_{1}, \dots, \phi_{n}; A \right) = \left\{ f \in \Phi'\st \left(f[\phi_{1}], \dots, f[\phi_{n}]\right) \in A \right\}$
where $n \in \N$, $\phi_{1}, \dots, \phi_{n} \in M$ and $A \in \mathcal{B}\left(\R^{n}\right)$. 
The $\sigma$-algebra generated by $\mathcal{Z}(\Phi',M)$ is denoted by $\mathcal{C}(\Phi',M)$. If  $M$ is finite we have $\mathcal{C}(\Phi',M)=\mathcal{Z}(\Phi',M)$. Moreover, we always have $\mathcal{C}(\Phi')\defeq \mathcal{C}(\Phi',\Phi) \subseteq \mathcal{B}(\Phi'_{\beta})$, but equality is not true in general.
A function $\mu: \mathcal{Z}(\Phi',\Phi) \rightarrow [0,\infty]$ is called a \emph{cylindrical measure} on $\Phi'$ if for each finite subset $M \subseteq \Phi$ the restriction of $\mu$ to $\mathcal{C}(\Phi',M)$ is a measure. A cylindrical measure $\mu$ is said to be \emph{finite} if $\mu(\Phi')< \infty$ and a \emph{cylindrical probability measure} if $\mu(\Phi')=1$. The \emph{Fourier transform}\index{characteristic function} of $\mu$ is the function $\widehat{\mu}: \Phi \rightarrow \C$ defined by 
$$ \widehat{\mu}(\phi)= \int_{\Phi'} e^{i f[\phi]} \mu(df), \quad \forall \, \phi \in \Phi. $$

Let $\ProbSpace$ be a (complete) probability space. Denote by $L^{0} \ProbSpace$ the space of equivalence classes of real-valued random variables  defined on $\ProbSpace$.  We always consider the space $L^{0} \ProbSpace$ equipped with the topology of convergence in probability and in this case it is a complete, metrizable, topological vector space. 

A cylindrical random variable in $\Phi'$ is a linear map $X: \Phi \rightarrow L^{0} \ProbSpace$. 
If $Z=\mathcal{Z}\left(\phi_{1}, \dots, \phi_{n}; A \right)$ is a cylindrical set, for $\phi_{1}, \dots, \phi_{n} \in \Phi$ and $A \in \mathcal{B}\left(\R^{n}\right)$, let 
\begin{equation*} 
\mu_{X}(Z) \defeq \Prob \left( ( X(\phi_{1}), \dots, X(\phi_{n})) \in A  \right).
\end{equation*}
The map $\mu_{X}$ is a cylindrical probability measure on $\Phi'$ and it is called the \emph{cylindrical distribution} of $X$. The \emph{Fourier transform} of $X$ is defined to be the Fourier transform $\widehat{\mu}_{X}: \Phi \rightarrow \C$ of its cylindrical distribution $\mu_{X}$. 

Let $X$ be a $\Phi'_{\beta}$-valued random variable, i.e. $X:\Omega \rightarrow \Phi'_{\beta}$ is a $\mathscr{F}/\mathcal{B}(\Phi'_{\beta})$-measurable map. We denote by $\mu_{X}$ the probability distribution of $X$, i.e. $\mu_{X}(\Gamma)=\Prob \left( X \in  \Gamma \right)$, $\forall \, \Gamma \in \mathcal{B}(\Phi'_{\beta})$; it is a Borel probability measure on $\Phi'_{\beta}$. For each $\phi \in \Phi$ we denote by $X[\phi]$ the real-valued random variable defined by $X[\phi](\omega) \defeq X(\omega)[\phi]$, for all $\omega \in \Omega$. It is clear that the mapping $\phi \mapsto X[\phi]$ defines a cylindrical random variable. 

If $X$ is a cylindrical random variable in $\Phi'$, a $\Phi'_{\beta}$-valued random variable $Y$ is a called a \emph{version} of $X$ if for every $\phi \in \Phi$, $X(\phi)=Y[\phi]$ $\Prob$-a.e. A $\Phi'_{\beta}$-valued random variable $X$ is called \emph{regular} if there exists a weaker countably Hilbertian topology $\theta$ on $\Phi$ such that $\Prob( X \in (\Phi_{\theta})')=1$. 

Let $J=[0,\infty)$ or $J=[0,T]$ for some $T>0$. We say that $X=\{ X_{t} \}_{t \in J}$ is a \emph{cylindrical process} in $\Phi'$ if $X_{t}$ is a cylindrical random variable, for each $t \in J$. Clearly, any $\Phi'_{\beta}$-valued stochastic processes $X=\{ X_{t} \}_{t \in J}$ defines a cylindrical process under the prescription: $X[\phi]=\{ X_{t}[\phi] \}_{t \in J}$, for each $\phi \in \Phi$. We will say that it is the \emph{cylindrical process determined/induced} by $X$.
A $\Phi'_{\beta}$-valued processes $Y=\{Y_{t}\}_{t \in J}$ is said to be a $\Phi'_{\beta}$-valued \emph{version} of the cylindrical process $X=\{X_{t}\}_{t \in J}$ on $\Phi'$ if for each $t \in J$, $Y_{t}$ is a $\Phi'_{\beta}$-valued version of $X_{t}$.  

Let $X=\{ X_{t} \}_{t \in J}$ be a $\Phi'_{\beta}$-valued process.
We say that $X$ is \emph{continuous} (respectively \emph{c\`{a}dl\`{a}g}) if for $\Prob$-a.e. $\omega \in \Omega$, the \emph{sample paths} $t \mapsto X_{t}(w) \in \Phi'_{\beta}$ of $X$ are continuous (respectively right-continuous with left limits). We say that the process $X$ is \emph{regular} if for every $t \in J$, $X_{t}$ is a regular random variable. 

A result of fundamental importance in this work is the following: 

\begin{theo}[Regularization Theorem; \cite{FonsecaMora:2018}, Theorem 3.2]\label{theoRegularizationTheoremCadlagContinuousVersion}
Let $\Phi$ be a nuclear space. Let $X=\{X_{t} \}_{t \geq 0}$ be a cylindrical process in $\Phi'$ satisfying:
\begin{enumerate}
\item For each $\phi \in \Phi$, the real-valued process $X(\phi)=\{ X_{t}(\phi) \}_{t \geq 0}$ has a continuous (respectively c\`{a}dl\`{a}g) version.
\item For every $T > 0$, the family $\{ X_{t}: t \in [0,T] \}$ of linear maps from $\Phi$ into $L^{0} \ProbSpace$ is equicontinuous.  
\end{enumerate}
Then there exist a weaker countably Hilbertian topology $\theta$ on $\Phi$ 
and a $(\widetilde{\Phi_{\theta}})'_{\beta}$-valued continuous (respectively c\`{a}dl\`{a}g) process $Y= \{ Y_{t} \}_{t \geq 0}$, such that for every $\phi \in \Phi$, $Y[\phi]= \{ Y_{t}[\phi] \}_{t \geq 0}$ is a version of $X(\phi)= \{ X_{t}(\phi) \}_{t \geq 0}$. Moreover $Y$ is a $\Phi'_{\beta}$-valued, regular, continuous (respectively c\`{a}dl\`{a}g) version of $X$ that is unique up to indistinguishable versions. 
\end{theo}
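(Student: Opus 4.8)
The plan is to reduce the construction to a process taking values in a fixed separable dual Hilbert space and then carry out a Hilbert-space regularization, the topology $\theta$ being chosen so that the relevant inclusions are Hilbert--Schmidt. First I would use hypothesis (2) together with nuclearity to build $\theta$. For each $n \in \N$ the equicontinuity of $\{ X_{t} : t \in [0,n] \}$ as maps into $L^{0} \ProbSpace$ yields a continuous Hilbertian seminorm $\rho_{n}$ on $\Phi$ controlling the whole family uniformly in $t$: $\rho_{n}(\phi) \to 0$ forces $\sup_{t \in [0,n]} \Prob( \abs{X_{t}(\phi)} > \epsilon ) \to 0$ for every $\epsilon > 0$. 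By nuclearity I can then choose an increasing sequence of continuous Hilbertian seminorms $\{ p_{n} \}_{n \in \N}$ with $\rho_{n} \leq p_{n}$ and with each canonical inclusion $i_{p_{n},p_{n+1}} : \Phi_{p_{n+1}} \to \Phi_{p_{n}}$ Hilbert--Schmidt. Letting $\theta$ be the topology generated by $\{ p_{n} \}$, this is a weaker countably Hilbertian topology, so $\widetilde{\Phi_{\theta}}$ is separable, complete and pseudo-metrizable, and $(\widetilde{\Phi_{\theta}})'$ is the increasing union of the separable dual Hilbert spaces $\Phi'_{p_{n}}$.

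Second, for a fixed $t$ I would show that $X_{t}$ extends to a Radon $\Phi'_{p_{n+1}}$-valued random variable. The Fourier transform of the cylindrical distribution of $X_{t}$ is continuous with respect to $\rho_{n} \leq p_{n}$, and since $i_{p_{n},p_{n+1}}$ is Hilbert--Schmidt, a Minlos/Sazonov-type argument on the Hilbert space $\Phi_{p_{n+1}}$ produces a Radon extension concentrated on $\Phi'_{p_{n+1}} \subseteq (\widetilde{\Phi_{\theta}})'$.

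Third, and this is the technical heart, I would pass from pathwise regularity tested against individual $\phi$ to genuine regularity of the dual-valued path. Fix an orthonormal basis $\{ e_{k} \}$ of $\Phi_{p_{n+1}}$ with representatives in $\Phi$; the Hilbert--Schmidt property of $i_{p_{n},p_{n+1}}$ gives $\sum_{k} p_{n}(e_{k})^{2} < \infty$. Using hypothesis (1) I select continuous (respectively c\`{a}dl\`{a}g) real versions of all the scalar processes $X(e_{k})$ on a single set of full probability. Equicontinuity relative to $\rho_{n} \leq p_{n}$ controls $X_{t}(e_{k})$ in probability by $p_{n}(e_{k})$ uniformly in $t$; combined with the summability above, a Fubini/Borel--Cantelli argument yields that $\sum_{k} X_{t}(e_{k})^{2}$ is finite almost surely and uniformly over $t$ in compact intervals, so the partial sums $\sum_{k} X_{t}(e_{k}) f_{k}$, with $\{ f_{k} \}$ the dual basis of $\Phi'_{p_{n+1}}$, converge there and define $Y_{t}(\omega)$. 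The uniformity of this convergence transports the scalar path regularity to strong-dual regularity of $t \mapsto Y_{t}(\omega)$; the separability and Baire property of $\widetilde{\Phi_{\theta}}$ are what make this step work. I expect precisely this passage, from in-probability control to almost-sure uniform summability using only equicontinuity and no moment bounds, to be the main obstacle.

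Finally, composing with the continuous inclusion $(\widetilde{\Phi_{\theta}})'_{\beta} \hookrightarrow \Phi'_{\beta}$ yields the $\Phi'_{\beta}$-valued regular version $Y$. The identity $Y[\phi] = X(\phi)$ holds for $\phi$ in the total family $\{ e_{k} \}$ and extends to every $\phi \in \Phi$ by continuity in $\phi$ of both sides, and uniqueness up to indistinguishability follows because two c\`{a}dl\`{a}g (respectively continuous) $\Phi'_{\beta}$-valued processes agreeing cylindrically for each $\phi$ must coincide on a countable dense set of times and, by right-continuity together with the separating functionals, everywhere.
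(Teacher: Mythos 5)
This statement is not proved in the paper at all: it is quoted verbatim as Theorem 3.2 of \cite{FonsecaMora:2018}, so there is no in-paper proof to compare against. Judged on its own merits, your architecture (build $\theta$ from the equicontinuity hypothesis, reduce to a chain of separable Hilbert duals $\Phi'_{p_{n}}$ with Hilbert--Schmidt links, regularize each $X_{t}$ by Minlos--Sazonov, then sum $\sum_{k} X_{t}(e_{k}) f_{k}$) is the right one and matches the strategy of the cited proof and of the related arguments that do appear in this paper (Lemma \ref{lemmPseudoSeminormMeasures}, the use of \cite{FonsecaMora:2018} Proposition 3.10 in Corollary \ref{coroConditionRandVariSkoSpaceUltrab}). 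But the step you yourself flag as the technical heart is not closed by the mechanism you name.

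The gap is the passage from pointwise-in-$t$ control to control of the supremum over $t$. Hypothesis (2) gives $\sup_{t \in [0,T]} \Prob(\abs{X_{t}(\phi)} > \epsilon) \to 0$ as $p_{n}(\phi) \to 0$, i.e.\ a bound on $\Prob(\abs{X_{t}(\phi)}>\epsilon)$ uniform in $t$ but still evaluated at each fixed $t$. A Fubini/Borel--Cantelli argument run from this gives, for each \emph{fixed} $t$, a.s.\ convergence of $\sum_{k} X_{t}(e_{k})^{2}$, with an exceptional null set depending on $t$; since $[0,T]$ is uncountable, this does not yield the uniform-in-$t$ summability you need to define $Y_{t}(\omega)$ as a single c\`{a}dl\`{a}g $\Phi'_{p_{n+1}}$-valued path. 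What is actually required is an estimate on $\Prob\left(\sup_{t \in [0,T]} \abs{\tilde{X}_{t}(\phi)} > \epsilon\right)$ in terms of a continuous Hilbertian seminorm, where $\tilde{X}(\phi)$ are the c\`{a}dl\`{a}g versions from hypothesis (1). This is obtained by a separate argument: the functional $\phi \mapsto \Exp\bigl[\sup_{t \leq T}\abs{\tilde{X}_{t}(\phi)}/(1+\sup_{t \leq T}\abs{\tilde{X}_{t}(\phi)})\bigr]$ is a pseudo-seminorm (linearity of $\phi \mapsto \tilde{X}(\phi)$ as a $D_{T}(\R)$-valued map must itself be checked, via indistinguishability of c\`{a}dl\`{a}g processes agreeing a.s.\ at each time), it is sequentially lower semicontinuous because the c\`{a}dl\`{a}g property reduces the supremum to a countable one and Fatou applies, and it is then continuous by a Baire category argument on the complete pseudo-metrizable space $\widetilde{\Phi_{\theta}}$ --- exactly the pattern of Lemma \ref{lemmPseudoSeminormMeasures} in this paper. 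You gesture at the Baire property of $\widetilde{\Phi_{\theta}}$ as ``what makes this step work,'' which is the correct intuition, but the proposal as written substitutes Borel--Cantelli for this argument, and that substitution fails. Once the supremum estimate is in hand, your Hilbert--Schmidt summation and the remaining steps (identification $Y[\phi]=X(\phi)$, uniqueness via a countable dense set of times and a countable separating set of test functions) go through as you describe.
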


\section{The Skorokhod topology in $D_{T}(\Phi'_{\beta})$} \label{sectionSkoSpac}

Let $\Phi$ be a (Hausdorff) locally convex space and let $\{ q_{\gamma}(\cdot): \gamma \in \Gamma \}$ be a family of seminorms generating the strong topology $\beta$ on $\Phi'$. Fix $T>0$ and denote by $D_{T}(\Phi'_{\beta})$ the collection of all c\`{a}dl\`{a}g (i.e. right-continuous with left limits) maps from $[0,T]$ into $\Phi'_{\beta}$. 

Following \cite{Jakubowski:1986} (see also \cite{Mitoma:1983}), for a given $ \gamma \in \Gamma$ we consider the pseudometric $d_{\gamma}$ on $D_{T}(\Phi'_{\beta})$ given by  
\begin{equation}\label{defSkorokhodPseudometrics}
d_{\gamma}(x,y)=\inf_{\lambda \in \Lambda}  \left\{ \sup_{t \in [0,T]} q_{\gamma}(x(t)-y(\lambda(t))) + \sup_{0 \leq s< t \leq T} \abs{\log \frac{\lambda(t)-\lambda(s)}{t-s}} \right\},
\end{equation} 
for all $x, y \in D_{T}(\Phi'_{\beta})$, where $\Lambda$ denotes the set of all the strictly increasing continuous maps $\lambda$ from $[0,T]$ onto itself. % with $\sup_{0 \leq s< t \leq T} \abs{\log \frac{\lambda(t)-\lambda(s)}{t-s}} < \infty$.  

The family of seminorms $\{d_{\gamma}: \gamma \in \Gamma\}$ generates a completely regular topology on $D_{T}(\Phi'_{\beta})$ that is known as the \emph{Skorokhod  topology} (also known as the $J1$ topology). This topology does not depend on the particular choice of seminorms $\{ q_{\gamma}(\cdot): \gamma \in \Gamma \}$ on $\Phi'_{\beta}$ (see \cite{Jakubowski:1986}, Theorem 1.3). 

%We can also consider the space $D_{[a,b]}(\Phi'_{\beta})$ of $\Phi'_{\beta}$-valued c\`{a}dl\`{a}g functions defined on $[a,b]$ for $-\infty<a<b<\infty$. The family of pseudometrics generating the Skorokhod topology in $D_{[a,b]}(\Phi'_{\beta})$ is defined as above with the obvious modifications. The completely regular spaces $D_{T}(\Phi'_{\beta})$ and $D_{[a,b]}(\Phi'_{\beta})$ are isometrically isomorphic (see \cite{Kouritzin:2016}, Lemma 9).

Let $\Phi$ be a nuclear space and let $q$ be a continuous seminorm on $\Phi$. Very important for our forthcoming developments is the space $D_{T}(\Phi'_{q})$. Observe that because $\Phi'_{q}$ is a separable Banach space, then the space $D_{T}(\Phi'_{q})$ is complete, separable and metrizable (see \cite{EthierKurtz, KallianpurXiong}). 

The next result characterizes the compact subsets of $D_{T}(\Phi'_{q})$. For its statement we will need the following \emph{modulus of continuity}:
\begin{enumerate}
\item If $x \in D_{T}(\Phi'_{q})$, $\delta>0$, let 
$$w'_{x}(\delta,q)=\inf_{\{t_{i}\}} \max_{1 \leq i \leq n} \sup \{ q'(x(t)-x(s)): s,t \in [t_{i-1},t_{i})\}, $$
\item If $x \in D_{T}(\Phi'_{q})$, $\phi \in \Phi_{q}$, $\delta>0$, let 
$$w'_{x}(\delta,\phi)=\inf_{\{t_{i}\}} \max_{1 \leq i \leq n} \sup \{ \abs{x(t)[\phi]-x(s)[\phi]}: s,t \in [t_{i-1},t_{i})\}, $$
\item If $x \in D_{T}(\R)$, $\delta >0$, let 
$$w'_{x}(\delta)= \inf_{\{t_{i}\}} \max_{1 \leq i \leq n} \sup \{ \abs{x(t)-x(s)}: s,t \in [t_{i-1},t_{i})\}, $$
\end{enumerate}  
where the infimum is taken over the finite partitions $0=t_{0} < t_{1}< \dots < t_{n}=T$, $t_{i}-t_{i-1}> \delta$, $i=1,2,  \dots, n$.  

\begin{prop} [\cite{KallianpurXiong}, Theorem 2.4.3] \label{propProperSkorHilbSpaces} Let $q$ be a continuous seminorm on $\Phi$. Then, $A \subseteq D_{T}(\Phi'_{q})$ is compact if and only if the following two conditions are satisfied:
\begin{enumerate}
\item there exists $\mathcal{K} \subseteq \Phi'_{q}$ compact such that $x(t) \in \mathcal{K}$  $\forall t \in [0,T], x \in A$.   
\item $\lim_{\delta \rightarrow 0+} \sup_{x \in A} w'_{x}(\delta,q )=0$.
\end{enumerate}
\end{prop}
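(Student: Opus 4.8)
The plan is to treat this as an Arzel\`a--Ascoli theorem for the Skorokhod space. Write $E \defeq \Phi'_{q}$ with its dual norm $q'$; since $\Phi$ is nuclear the Hilbert space $\Phi_{q}$ is separable, so $E$ is a separable Banach space and, as already noted, $D_{T}(E)$ is a complete separable metric space for the $J1$ metric $d$ built from $q'$ as in \eqref{defSkorokhodPseudometrics}. In a complete metric space compactness coincides with sequential compactness and with being closed and totally bounded, so it suffices to establish the equivalence of relative compactness of $A$ with the conjunction of (1) and (2), closedness of $A$ then upgrading relative compactness to compactness. Throughout I will use the standard single-function fact that every $x \in D_{T}(E)$ satisfies $w'_{x}(\delta,q) \to 0$ as $\delta \to 0^{+}$, which is exactly the statement that a c\`adl\`ag path can be partitioned into finitely many subintervals on each of which its $q'$-oscillation is arbitrarily small.

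First I would prove necessity. Assume $A$ is compact. For (1), set $\mathcal{K} \defeq \overline{\{x(t): t \in [0,T],\, x \in A\}}$ and argue that $\mathcal{K}$ is compact in $E$: given sequences $x_{n} \in A$ and $t_{n} \in [0,T]$, compactness of $A$ and of $[0,T]$ yields, along a subsequence, $x_{n} \to x$ in $d$ and $t_{n} \to t$; the defining property of $J1$ convergence then forces $x_{n}(t_{n})$ to accumulate at either $x(t)$ or its left limit $x(t^{-})$, so every sequence in the generating set has a convergent subsequence and $\mathcal{K}$ is compact. For (2), I would use total boundedness: given $\epsilon > 0$ choose a finite $\epsilon$-net $x_{1}, \dots, x_{m}$ for $A$, pick $\delta_{0}$ with $w'_{x_{i}}(\delta_{0},q) < \epsilon$ for all $i$ by the single-function fact, and then transfer this bound to an arbitrary $x \in A$ by comparing $x$ with the nearest $x_{i}$ through the time-change $\lambda$ nearly realizing $d(x,x_{i})$; this yields $\sup_{x \in A} w'_{x}(\delta,q) \to 0$.

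Next I would prove sufficiency. Assume (1) and (2) hold and show $A$ is totally bounded. Fix $\epsilon > 0$ and use (2) to choose $\delta$ with $\sup_{x \in A} w'_{x}(\delta,q) < \epsilon$. For each $x \in A$ there is then a partition $0 = t_{0} < \cdots < t_{n} = T$ with $t_{i} - t_{i-1} > \delta$ (so $n \leq T/\delta + 1$) on whose intervals the $q'$-oscillation of $x$ is below $\epsilon$; approximating $x$ by the step function equal to $x(t_{i-1})$ on $[t_{i-1},t_{i})$ gives a uniform $q'$-error $< \epsilon$. Since $\mathcal{K}$ is compact it is totally bounded, so the values may be replaced by members of a fixed finite $\epsilon$-net, and the partition points may be pushed onto a fixed finite grid of $[0,T]$; there are only finitely many step functions arising this way, and they form a finite net for $A$ in $d$ whose mesh is a fixed multiple of $\epsilon$. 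Hence $A$ is totally bounded, and by completeness of $D_{T}(E)$ its closure is compact.

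The main obstacle I anticipate is the bookkeeping linking the $J1$ time-reparametrization to the partition-based modulus $w'$. In both directions the difficulty is the same: the metric $d$ compares $x$ and $y$ only after an admissible time change $\lambda$, whereas $w'_{x}(\delta,q)$ is defined through partitions of the undeformed interval, so one must verify that $J1$-closeness of two paths implies closeness of their moduli (necessity) and that discretizing the partition points onto a finite grid does not destroy the oscillation control (sufficiency). Controlling how $\lambda$ displaces partition endpoints, using the logarithmic term in \eqref{defSkorokhodPseudometrics} to keep $\lambda$ uniformly close to the identity, is the delicate estimate on which both implications rest.
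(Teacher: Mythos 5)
The paper gives no proof of this proposition: it is imported verbatim from \cite{KallianpurXiong}, Theorem 2.4.3, and your argument is in outline the standard Arzel\`a--Ascoli-type proof of that cited result (necessity via total boundedness of $A$ together with the fact that $x_{n}(t_{n})$ accumulates at $x(t)$ or $x(t^{-})$ under $J1$ convergence; sufficiency via approximation by step functions with values in a finite net of the compact set $\mathcal{K}$ and with jump times pushed onto a finite grid). The time-change bookkeeping you defer is indeed the technical crux of both implications, but it is routine, and you have correctly located where the logarithmic term in \eqref{defSkorokhodPseudometrics} is used to keep $\lambda$ uniformly close to the identity relative to the mesh $\delta$, so I see no gap.
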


Now let $\theta$ be a weaker countably Hilbertian topology on the nuclear space $\Phi$. We proceed to study some properties of the space $D_{T}( (\widetilde{\Phi_{\theta}})'_{\beta})$ equipped with its Skorokhod topology, that we will denote temporarily by $D_{s,T}((\widetilde{\Phi_{\theta}})'_{\beta})$ to distinguish it from the inductive limit topology that we introduce below. 

Let $(p_{n}: n \in \N)$ be an increasing sequence of continuous Hilbertian seminorms on $\Phi$ generating the topology $\theta$. Because (see \cite{FonsecaMora:2018}, Proposition 2.4)
$$(\widetilde{\Phi_{\theta}})'_{\beta} = \bigcup_{n \in \N} \Phi'_{p_{n}},$$
it is a consequence of the Banach-Steinhaus theorem that (see e.g. \cite{Jakubowski:1986}, Proposition 5.3) 
\begin{equation} \label{decompSkorSpaceWeakCountHilb}
D_{T}((\widetilde{\Phi_{\theta}})'_{\beta}) = \bigcup_{n \in \N} D_{T}(\Phi'_{p_{n}} ). 
\end{equation}
Moreover because the canonical inclusion from $\Phi'_{p_{n}}$ into $(\widetilde{\Phi_{\theta}})'_{\beta}$ is continuous, then for each $n \in \N$ the inclusion from $D_{T}(\Phi'_{p_{n}})$ into $D_{s,T}((\widetilde{\Phi_{\theta}})'_{\beta})$ is continuous (see Lemma 1.5 in \cite{Jakubowski:1986}). 

In view of \eqref{decompSkorSpaceWeakCountHilb} and following an idea from P\'{e}rez-Abreu and Tudor in \cite{PerezAbreuTudor:1992}, we can also consider on $D_{T}((\widetilde{\Phi_{\theta}})'_{\beta})$ the \emph{inductive limit topology} with respect to the spaces $(D_{T}(\Phi'_{p_{n}}): n \in \N)$, i.e. the finest topology for which the inclusions from $D_{T}(\Phi'_{p_{n}})$ into $D_{T}((\widetilde{\Phi_{\theta}})'_{\beta})$ are continuous. We denote the space $D_{T}((\widetilde{\Phi_{\theta}})'_{\beta})$ equipped with this topology by $D_{i,T}((\widetilde{\Phi_{\theta}})'_{\beta})$.  
We summarize properties of $D_{i,T}((\widetilde{\Phi_{\theta}})'_{\beta})$ and $D_{s,T}((\widetilde{\Phi_{\theta}})'_{\beta})$ in the following result.

\begin{prop} \label{propProperSkoroSpaceWeakerCHT}
Let $\theta$ be a weaker countably Hilbertian topology on the nuclear space $\Phi$. Then, 
\begin{enumerate}
\item The spaces $D_{i,T}((\widetilde{\Phi_{\theta}})'_{\beta})$ and $D_{s,T}((\widetilde{\Phi_{\theta}})'_{\beta})$ are Souslin.
\item The compact subsets of $D_{s,T}((\widetilde{\Phi_{\theta}})'_{\beta})$ are metrizable. 
\item The canonical inclusion from $D_{s,T}( (\widetilde{\Phi_{\theta}})'_{\beta})$ (and hence from $D_{i,T}((\widetilde{\Phi_{\theta}})'_{\beta})$) into $D_{T}(\Phi'_{\beta})$ is continuous.  
\end{enumerate}
\end{prop}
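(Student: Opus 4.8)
The plan is to base all three parts on a single construction: a continuous injection of $D_{s,T}((\widetilde{\Phi_{\theta}})'_{\beta})$ into a metrizable space. Recall that for each $n$ the space $\Phi'_{p_{n}}$ is a separable Banach space, so $D_{T}(\Phi'_{p_{n}})$ is Polish. Fix a countable dense subset $\{ \phi_{m}: m \in \N \}$ of $\widetilde{\Phi_{\theta}}$. For each $\phi \in \widetilde{\Phi_{\theta}}$ the evaluation $f \mapsto f[\phi]$ is a continuous linear functional on $(\widetilde{\Phi_{\theta}})'_{\beta}$ (it is dominated by the strong seminorm $\eta_{\{\phi\}}$), so by the functoriality of the Skorokhod topology under continuous maps (\cite{Jakubowski:1986}, Lemma 1.5) the map $x \mapsto x[\phi]$, given by $t \mapsto x(t)[\phi]$, is continuous from $D_{s,T}((\widetilde{\Phi_{\theta}})'_{\beta})$ into the Polish space $D_{T}(\R)$. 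I would then set $J(x) = (x[\phi_{m}])_{m \in \N}$, a continuous map into the metrizable (indeed Polish) space $\prod_{m \in \N} D_{T}(\R)$. Injectivity of $J$ follows because if $x(t)[\phi_{m}] = y(t)[\phi_{m}]$ for all $t$ and all $m$, then for each $t$ the continuous functionals $x(t), y(t) \in (\widetilde{\Phi_{\theta}})'$ agree on a dense subset and hence coincide, so $x = y$. In particular $D_{s,T}((\widetilde{\Phi_{\theta}})'_{\beta})$ is Hausdorff, and since the inductive limit topology is finer (it makes the identity $D_{i,T} \to D_{s,T}$ continuous), the space $D_{i,T}((\widetilde{\Phi_{\theta}})'_{\beta})$ is Hausdorff as well.

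For (1) I would form the topological direct sum $S = \bigsqcup_{n \in \N} D_{T}(\Phi'_{p_{n}})$, a countable disjoint union of Polish spaces and hence itself Polish. By \eqref{decompSkorSpaceWeakCountHilb} the map $S \to D_{i,T}((\widetilde{\Phi_{\theta}})'_{\beta})$ acting on the $n$-th summand by the canonical inclusion $D_{T}(\Phi'_{p_{n}}) \hookrightarrow D_{i,T}((\widetilde{\Phi_{\theta}})'_{\beta})$ is surjective, and it is continuous by the universal property of the inductive limit topology. Thus $D_{i,T}((\widetilde{\Phi_{\theta}})'_{\beta})$, being a Hausdorff continuous image of a Polish space, is Souslin. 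Precomposing with the continuous identity $D_{i,T} \to D_{s,T}$ exhibits $D_{s,T}((\widetilde{\Phi_{\theta}})'_{\beta})$ as a Hausdorff continuous image of the same Polish space, so it is Souslin too.

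Part (2) is the main obstacle, because nothing forces a compact subset of $D_{s,T}((\widetilde{\Phi_{\theta}})'_{\beta})$ to lie inside a single stratum $D_{T}(\Phi'_{p_{n}})$ with the matching topology, so one cannot simply invoke metrizability of the $D_{T}(\Phi'_{p_{n}})$. The device that bypasses this is precisely the injection $J$: given $K \subseteq D_{s,T}((\widetilde{\Phi_{\theta}})'_{\beta})$ compact, the restriction $J|_{K}$ is a continuous bijection from the compact space $K$ onto the subspace $J(K)$ of the metrizable space $\prod_{m} D_{T}(\R)$. A continuous bijection from a compact space onto a Hausdorff space is a homeomorphism, so $K$ is homeomorphic to $J(K)$ and therefore metrizable. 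The two ingredients to get right here are the Skorokhod-continuity of the evaluations $x \mapsto x[\phi_{m}]$ (functoriality) and their joint injectivity (separability of $\widetilde{\Phi_{\theta}}$).

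For (3) I would first verify that the canonical inclusion $(\widetilde{\Phi_{\theta}})'_{\beta} \hookrightarrow \Phi'_{\beta}$ is continuous. Every seminorm generating $\beta$ on $\Phi'$ has the form $\eta_{B}$ for some bounded $B \subseteq \Phi$; since $\theta$ is weaker than the nuclear topology, such a $B$ is $\theta$-bounded, its image in $\widetilde{\Phi_{\theta}}$ is bounded, and hence the restriction of $\eta_{B}$ to $(\widetilde{\Phi_{\theta}})'$ is a continuous seminorm for the strong topology of $(\widetilde{\Phi_{\theta}})'_{\beta}$; this is exactly where the hypothesis that $\theta$ is weaker is essential. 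Applying once more the functoriality of the Skorokhod topology (\cite{Jakubowski:1986}, Lemma 1.5) to this continuous inclusion yields the continuity of the induced map $D_{s,T}((\widetilde{\Phi_{\theta}})'_{\beta}) \to D_{T}(\Phi'_{\beta})$, and the assertion for $D_{i,T}$ follows by precomposition with the continuous identity $D_{i,T} \to D_{s,T}$.
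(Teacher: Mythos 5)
Your proposal is correct, and for parts (1) and (3) it follows essentially the same path as the paper: part (3) is verbatim the paper's argument (continuity of the inclusion $(\widetilde{\Phi_{\theta}})'_{\beta}\hookrightarrow\Phi'_{\beta}$ plus Lemma 1.5 of \cite{Jakubowski:1986}), and your part (1) merely unpacks, via the Polish direct sum $\bigsqcup_{n}D_{T}(\Phi'_{p_{n}})$, the general fact the paper cites from \cite{Treves} (Proposition A.4(c)) that a Hausdorff inductive limit of a countable family of Souslin spaces is Souslin. Where you genuinely diverge is part (2). The paper works at the level of the state space: it uses that $\widetilde{\Phi_{\theta}}$ is ultrabornological, hence barrelled, so every compact subset of $(\widetilde{\Phi_{\theta}})'_{\beta}$ is equicontinuous and therefore sits inside some metrizable ball $B_{p'}(1)$, and then invokes Proposition 1.6.vii) of \cite{Jakubowski:1986} to transfer metrizability of compacts from the state space to the Skorokhod space. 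You instead build a single continuous injection $J(x)=(x[\phi_{m}])_{m\in\N}$ into the Polish space $\prod_{m}D_{T}(\R)$, using separability of $\widetilde{\Phi_{\theta}}$, and conclude by the compact-to-Hausdorff homeomorphism lemma; this is self-contained (it also delivers the Hausdorff property needed in part (1) for free, which the paper leaves implicit) and avoids both the barrelledness argument and the citation of Jakubowski's Proposition 1.6.vii), at the modest cost of relying on separability of $\widetilde{\Phi_{\theta}}$ and on the Skorokhod functoriality of the evaluations $\Pi_{\phi}$ --- both of which are established elsewhere in the paper, so the argument stands.
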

\begin{prf} 
\emph{(1)} The space $D_{i,T}((\widetilde{\Phi_{\theta}})'_{\beta})$, being the inductive limit of the Souslin spaces $D_{T}(\Phi'_{p_{n}})$ is again a Souslin space (see \cite{Treves}, Proposition A.4(c), p.551). Now, because the canonical inclusion from $D_{i,T}((\widetilde{\Phi_{\theta}})'_{\beta})$ into $D_{s,T}((\widetilde{\Phi_{\theta}})'_{\beta})$ is continuous, then it follows that $D_{s,T}((\widetilde{\Phi_{\theta}})'_{\beta})$ is also Souslin. 

\emph{(2)} Because $\widetilde{\Phi_{\theta}}$ is ultrabornological, hence barrelled, if $K \subseteq (\widetilde{\Phi_{\theta}})'_{\beta}$ is compact then it is equicontinuous 
(see \cite{Schaefer}, Theorem IV.5.2, p.141). Therefore, there exists a continuous Hilbertian seminorm $p$ on $\widetilde{\Phi_{\theta}}$ such that $K \subseteq B_{p'}(1)$. Consequently, the set $K$ is metrizable. Then, because each compact subset of $(\widetilde{\Phi_{\theta}})'_{\beta}$ is metrizable, the space $D_{s,T}( (\widetilde{\Phi_{\theta}})'_{\beta})$ inherits the same property (see Proposition 1.6.vii) in \cite{Jakubowski:1986}). 

\emph{(3)} The conclusion follows from the fact that the topology on $(\widetilde{\Phi_{\theta}})'_{\beta}$ is finner than the induced topology from $\Phi'_{\beta}$ and Lemma 1.5 in \cite{Jakubowski:1986}. 
\end{prf}

\begin{rema}
If $\Phi$ is a F\'{e}chet nuclear space and $\theta$ coincides with the nuclear topology on $\Phi$ (hence $\widetilde{\Phi_{\theta}}=\Phi$), it it shown in Proposition 3.1 in \cite{PerezAbreuTudor:1992} that $\mathcal{B}(D_{i,T}(\Phi'_{\beta}))= \mathcal{B}(D_{s,T}(\Phi'_{\beta}))$.  
\end{rema}

\begin{note}
From now on and unless otherwise specified we will always assume that $D_{T}( (\widetilde{\Phi_{\theta}})'_{\beta})$ and $D_{T}(\Phi'_{\beta})$ are equipped with their Skorokhod  topologies. %We will make a brief reference to the Skorokhod M1 topology in Section \ref{}.   
\end{note}

The next result gives characterizations for compact subsets of 
$D_{T}(\Phi'_{\beta})$ when $\Phi$ is a barrelled nuclear space. We will need the following definition: for each $\phi \in \Phi$, let $\Pi_{\phi}: D_{T}(\Phi'_{\beta}) \rightarrow D_{T}(\R)$ the \emph{space projection} given by $x \mapsto x[\phi]=\{ x(t)[\phi]\}_{t \in [0,T]}$. 

\begin{theo} \label{theoCharacCompacSets} Let $\Phi$ be a nuclear space and let $A \subseteq D_{T}(\Phi'_{\beta})$. Consider the following statements:
\begin{enumerate}
\item $A$ is  compact in $D_{T}(\Phi'_{\beta})$. 
\item For any $\phi \in \Phi$, the set $\Pi_{\phi}(A)=\{ x[\phi]: x \in A\}$ is compact in $D_{T}(\R)$. 
\item There exists a continuous Hilbertian seminorm $q$ on $\Phi$ such that $A$ is  compact in $D_{T}(\Phi'_{q})$.  
\end{enumerate}
Then, we have (1) $\Rightarrow$ (2), (3) $\Rightarrow$ (1),  and if $\Phi$ is a barrelled nuclear space, we have (2) $\Rightarrow$ (3). 
\end{theo}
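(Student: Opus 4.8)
The plan is to treat the three implications separately, spending almost all of the effort on $(2)\Rightarrow(3)$. For $(1)\Rightarrow(2)$ and $(3)\Rightarrow(1)$ I would invoke functoriality of the Skorokhod space under continuous linear maps (Lemma 1.5 in \cite{Jakubowski:1986}). For fixed $\phi \in \Phi$ the evaluation $f \mapsto f[\phi]$ is continuous from $\Phi'_\beta$ into $\R$, hence it induces a continuous map $\Pi_\phi : D_T(\Phi'_\beta)\to D_T(\R)$; since the continuous image of a compact set is compact, $(1)$ yields $(2)$. Likewise the canonical inclusion $\Phi'_q \hookrightarrow \Phi'_\beta$ is continuous, so it induces a continuous inclusion $D_T(\Phi'_q)\hookrightarrow D_T(\Phi'_\beta)$ which is the identity on the underlying set; thus a set $A$ compact in $D_T(\Phi'_q)$ is compact in $D_T(\Phi'_\beta)$, giving $(3)\Rightarrow(1)$.

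The substance is $(2)\Rightarrow(3)$, where barrelledness and nuclearity must both be used. From $(2)$ each $\Pi_\phi(A)$ is bounded in $D_T(\R)$, so $\sup\{\abs{x(t)[\phi]} : x\in A,\ t\in[0,T]\}<\infty$ for every $\phi$; hence $B \defeq \{x(t): x\in A,\, t\in[0,T]\}$ is a weakly bounded subset of $\Phi'$. Since $\Phi$ is barrelled, $B$ is equicontinuous, and since $\Phi$ is nuclear I may choose a continuous Hilbertian seminorm $p$ with $B\subseteq B_{p'}(1)\subseteq\Phi'_p$ and then $q\ge p$ with $i_{p,q}:\Phi_q\to\Phi_p$ Hilbert--Schmidt. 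The dual $i'_{p,q}:\Phi'_p\to\Phi'_q$ is then Hilbert--Schmidt, hence compact, so $\mathcal{K}\defeq\overline{i'_{p,q}(B_{p'}(1))}$ is compact in $\Phi'_q$ and contains every $x(t)$. This furnishes condition $(1)$ of Proposition \ref{propProperSkorHilbSpaces}.

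It remains to verify condition $(2)$ of Proposition \ref{propProperSkorHilbSpaces}, i.e. $\lim_{\delta\to0+}\sup_{x\in A}w'_x(\delta,q)=0$. Using a singular value decomposition of the Hilbert--Schmidt inclusion I would write $q'(f)^2=\sum_j\lambda_j^2\,\abs{f[\psi_j]}^2$ for $f\in\Phi'_p$, with $\sum_j\lambda_j^2<\infty$ and $\{\psi_j\}$ a suitable normalized family (approximable inside $\Phi$ by density of $\Phi$ in $\Phi_p$, which is harmless because $B$ is $p'$-bounded). Splitting the series at a level $N$ with $\sum_{j>N}\lambda_j^2<\epsilon$ and using the uniform bound $\abs{f[\psi_j]}\le2$ on the tail, the interchange of $\max_i$ and $\sum_j$ gives an estimate of the form $w'_x(\delta,q)^2\le C\,w'_{\vec{x}}(\delta)^2+C\epsilon$, where $\vec{x}=(x[\psi_1],\dots,x[\psi_N])$ and $w'_{\vec{x}}$ is the $\R^N$-valued Skorokhod modulus. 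Thus everything reduces to showing that the finite-dimensional family $\{\vec{x}:x\in A\}$ is relatively compact in $D_T(\R^N)$, equivalently that $\sup_{x\in A}w'_{\vec{x}}(\delta)\to0$.

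This last step is the crux, and it is where I expect the real difficulty, because the naive attempt fails: relative compactness of each single coordinate $\Pi_{\psi_j}(A)$ in $D_T(\R)$ does \emph{not} imply relative compactness of the vector in $D_T(\R^N)$, since nearby but unaligned jumps in different coordinates cannot be straightened by one common time change under the mesh constraint $t_i-t_{i-1}>\delta$. The point at which hypothesis $(2)$ must be exploited in full strength is precisely here: $(2)$ gives relative compactness of $\Pi_{\sum_j a_j\psi_j}(A)$ for \emph{every} $a\in\R^N$ (each linear combination lies in $\Phi$ up to the harmless approximation above), and control of all one-dimensional projections is exactly what rules out unaligned jumps and forces $w'_{\vec{x}}(\delta)$ to be uniformly small. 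I would isolate this as a lemma: a subset of $D_T(\R^N)$ all of whose one-dimensional projections are relatively compact in $D_T(\R)$ is itself relatively compact. Proving it, most cleanly by contradiction --- extracting along $\delta_n\downarrow0$ and $x_n\in A$ an $\epsilon_0$-oscillation that no admissible partition can suppress, and manufacturing a direction $a$ whose projection then violates the $D_T(\R)$ modulus criterion --- is the main obstacle of the whole theorem. Once this lemma is available, combining it with the series estimate and Proposition \ref{propProperSkorHilbSpaces} completes $(2)\Rightarrow(3)$.
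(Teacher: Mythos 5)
Your proposal is correct, and on the one step that carries all the weight it takes a genuinely different route from the paper. The implications (1)$\Rightarrow$(2) and (3)$\Rightarrow$(1), and the first half of (2)$\Rightarrow$(3) --- weak boundedness of $K=\bigcup_{x\in A}\{x(t): t \in [0,T]\}$, equicontinuity via barrelledness, the choice of $p\leq q$ with $i_{p,q}$ Hilbert--Schmidt, and compactness in $\Phi'_{q}$ of the closure of $K$ --- coincide with the paper's argument. The divergence is in verifying $\lim_{\delta\to0+}\sup_{x\in A}w'_{x}(\delta,q)=0$. The paper takes a complete orthonormal system $(\phi^{q}_{j})\subseteq\Phi$ of $\Phi_{q}$, notes that $\sup_{x\in A}w'_{x}(\delta,\phi^{q}_{j})\to0$ for each $j$ and that $\sup_{x\in A}w'_{x}(\delta,\phi^{q}_{j})^{2}\leq4p(\phi^{q}_{j})^{2}$ is summable, and concludes by dominated convergence through the term-by-term bound $w'_{x}(\delta,q)^{2}\leq\sum_{j}w'_{x}(\delta,\phi^{q}_{j})^{2}$. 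That bound is precisely the coordinatewise reduction you warn against, and your warning is well founded: for a function with two unit jumps in two distinct coordinates at times less than $\delta$ apart the left-hand side is at least $1$ while the right-hand side vanishes, because the infimum over partitions on the left must be achieved by a single partition serving all coordinates simultaneously (for a fixed partition one has $\max_{i}\sup q'(\cdot)^{2}\leq\sum_{j}\max_{i}\sup\abs{\cdot}^{2}$, but taking the infimum over partitions on both sides reverses the direction needed to chain the inequalities). So the paper's written proof elides exactly the difficulty you isolate, and indeed compactness of the projections onto an orthonormal system alone cannot suffice. Your route --- truncating the Hilbert--Schmidt expansion, using $B_{p'}(1)$-boundedness to control the tail, and then invoking hypothesis (2) in full strength (all linear combinations, not just basis vectors) to obtain relative compactness of the finite-dimensional block in $D_{T}(\R^{N})$ --- is the sound way to close this step. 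Two small remarks: you need not prove your ``crux'' lemma from scratch, since it is the deterministic form of the standard criterion that a subset of $D_{T}(\R^{N})$ is relatively compact as soon as every component and every pairwise sum of components has relatively compact image in $D_{T}(\R)$ (equivalently it can be extracted from Lemma 3.3 of \cite{Jakubowski:1986}, already used elsewhere in this paper, applied with $E=\R^{N}$ and a countable family of linear functionals closed under addition); and your approximation of the singular vectors $\psi_{j}$ by elements of $\Phi$ is indeed harmless, since $x(t)\in B_{p'}(1)$ gives $\sup_{x\in A}\sup_{t}\abs{x(t)[\psi_{j}-\tilde{\psi}_{j}]}\leq p(\psi_{j}-\tilde{\psi}_{j})$ and hence $w'_{x}(\delta,\psi_{j})\leq w'_{x}(\delta,\tilde{\psi}_{j})+2p(\psi_{j}-\tilde{\psi}_{j})$.
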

\begin{prf} First observe from Proposition \ref{propProperSkoroSpaceWeakerCHT}(3) that the canonical inclusion $D_{T}(\Phi'_{q}) \rightarrow D_{T}(\Phi'_{\beta})$ is continuous, therefore if $A$ is compact in $D_{T}(\Phi'_{q})$, it is also in $D_{T}(\Phi'_{\beta})$. This shows \emph{(3) $\Rightarrow$ (1)}. 
Similarly, (1) $\Rightarrow$ (2) is a direct consequence of the continuity of the space projection $\Pi_{\phi}$ for each $\phi \in \Phi$. 

Now assume that $\Phi$ is a barrelled nuclear space. We are going to show that \emph{(2) $\Rightarrow$ (3)}. Let $K=\bigcup_{x \in A} \{ x(t): t \in [0,T]\}$.
Clearly, $A \subseteq D_{T}((K,\beta \cap K))$; where $(K,\beta \cap K)$ denotes the subspace $K \subseteq \Phi'_{\beta}$ equipped with the subspace topology induced on $K$ by the strong topology $\beta$ on $\Phi'$.
Moreover because for each $\phi \in \Phi$ the set $\Pi_{\phi}(A)$ is compact, then 
$$\sup_{x \in A} \sup_{t \in [0,T]} \abs{x(t)[\phi]} < \infty, \quad \forall \, \phi \in \Phi.$$ 
Therefore the set $K \subseteq \Phi'_{\beta}$ is weakly bounded and because $\Phi$ is barrelled, this implies that $K$ is strongly bounded and equicontinuous
(see \cite{Schaefer}, Theorem IV.5.2, p.141). Then the polar $K^{0}$ of $K$ is a neighborhood of zero of $\Phi$. But  because $\Phi$ is nuclear, there exists a continuous Hilbertian seminorm $p$ on $\Phi$ such that $B_{p}(1) \subseteq K^{0}$. If $p_{K^{0}}$ is the continuous seminorm on $\Phi$ with unit ball $K^{0}$ (i.e. $p_{K^{0}}$ is the Minkowski functional of $K^{0}$), we then have that the inclusion $i_{p_{K^{0}},p}: \Phi_{p} \rightarrow \Phi_{p_{K^{0}}}$ is continuous, and hence its dual operator $i'_{p_{K^{0}},p}: \Phi'_{p_{K^{0}}} \rightarrow \Phi_{p}$ is continuous. Let $q$ be a continuous Hilbertian seminorm on $\Phi$ such that $p \leq q$ and $i_{p,q}: \Phi_{q} \rightarrow \Phi_{p}$ is Hilbert-Schmidt. Then $i'_{p,q}: \Phi'_{p} \rightarrow \Phi'_{q}$ is Hilbert-Schmidt. But because $K$ is the unit ball in $\Phi'_{p_{K^{0}}}$, and the map $i'_{p_{K^{0}},q}=i'_{p,q} \circ i'_{p_{K^{0}},p}$ is Hilbert-Schmidt, hence compact, the image of $K$ under $i'_{p_{K^{0}},q}$ is relatively compact in $\Phi'_{q}$. Thus if $\mathcal{K}$ denote the closure of $K$ in $\Phi'_{q}$, we then have that $\mathcal{K}$ is compact in $\Phi'_{q}$ and that $A \subseteq D_{T}((K,\beta \cap K)) \subseteq D_{T}(\Phi'_{p_{K^{0}}}) \subseteq D_{T}(\Phi'_{q})$ (the inclusion being justified because the the topology on $\Phi'_{p_{K^{0}}}$  is finner that the topology induced by $\Phi'_{q}$; see \cite{Jakubowski:1986}, Lemma 1.5). Hence $A$ is a subset of $D_{T}(\Phi'_{q})$ that satisfies the first condition in Proposition \ref{propProperSkorHilbSpaces}. 

Our next objective is to show that $A$ also satisfies the second condition in Proposition \ref{propProperSkorHilbSpaces}. We will follow some ideas from the proof of Theorem 2.4.4 in \cite{KallianpurXiong}. 

Let $(\phi^{q}_{j})_{j \in \N} \subseteq \Phi$ be a complete orthonormal system in $\Phi_{q}$. Observe that for each $j \in \N$, the map  $\Pi_{\phi^{q}_{j}}:D_{T}(\Phi'_{\beta}) \rightarrow D_{T}(\R)$ given by $x \mapsto x[\phi^{q}_{j}]=\{ x(t)[\phi^{q}_{j}]: t \in [0,T]\}$ is continuous. Then for every $j \in \N$ the set $B_{j} \defeq \Pi_{\phi^{q}_{j}}(A)= \{ x[\phi^{q}_{j}]: x \in A\}$ is compact in $D_{T}(\R)$. Therefore we have that 
$$ \lim_{\delta \rightarrow 0+} \sup_{x \in A} w'_{x}(\delta,\phi^{q}_{j} ) \leq \lim_{\delta \rightarrow 0+} \sup_{y \in B_{j}} w'_{y}(\delta)=0.$$
Now, recall from our previous arguments that $x(t) \in \mathcal{K} \subseteq B_{p'}(1)$ for every $t \in [0,T], x \in A$. Then
$$ \sup_{x \in A} w'_{x}(\delta,\phi^{q}_{j} )^{2} \leq \sup_{x \in A} \sup_{t \in [0,T]} 4 \abs{x(t)[\phi^{q}_{j}]}^{2} \leq 4 p(\phi^{q}_{j})^{2}, $$
but because $i_{p,q}$ is Hilbert-Schmidt we have that $\sum_{j=1}^{\infty} p(\phi^{q}_{j})^{2} < \infty$. Therefore from the dominated convergence theorem, 
$$ \lim_{\delta \rightarrow 0+} \sup_{x \in \mathcal{K}} w'_{x}(\delta,q )^{2} \leq \sum_{j=1}^{\infty} \lim_{\delta \rightarrow 0+} \sup_{x \in \mathcal{K}} w'_{x}(\delta,\phi^{q}_{j} )^{2}=0. $$
Thus Proposition \ref{propProperSkorHilbSpaces} shows that $A$ is compact in  $D_{T}(\Phi'_{q})$.
\end{prf}

\begin{rema}
If $\Phi$ is a barrelled nuclear space and $(q_{i}: i \in I)$ is a family of continuous Hilbertian seminorms generating the nuclear topology on $\Phi$, then similarly as we did for \eqref{decompSkorSpaceWeakCountHilb} the Banach-Steinhaus theorem shows that 
$$ D_{T}(\Phi'_{\beta})=  \bigcup_{i \in I} D_{T}(\Phi'_{q_{i}} ),$$
and hence, we can also define the \emph{inductive limit topology} on $D_{T}(\Phi'_{\beta})$ with respect to the family of  spaces $(D_{T}(\Phi'_{q_{i}}): i \in I)$. Using Theorem \ref{theoCharacCompacSets} and from similar arguments to those used in \cite{PerezAbreuTudor:1992}, one can show that the compact subsets of $D_{T}(\Phi'_{\beta})$ coincide under both the inductive limit topology and the Skorokhod topology (see Lemma 3.2 in \cite{PerezAbreuTudor:1992} for the details).  
\end{rema}

\section{Measures and  Random Variables in  $D_{T}(\Phi'_{\beta})$} \label{sectionMeRVSkoSpac}

\begin{assu}
Unless otherwise indicated, in this section $\Phi$ will denote a (Hausdorff) locally convex space.
\end{assu}

\subsection{Cylindrical measures and cylindrical random variables}

In this section we introduce the concepts of cylindrical measures and cylindrical random variables in $D_{T}(\Phi'_{\beta})$. We want to stress the fact that the standard definitions for these objects cannot be directly formulated because the space $D_{T}(\Phi'_{\beta})$ is not a topological vector space since the addition is not continuous. The main motivation for the introduction of these two concepts is because they provide an alternative approach to handle the usual measurability problems that occurs on the space $D_{T}(\Phi'_{\beta})$ (see \cite{Jakubowski:1986}). 

We start by introducing the class of cylindrical sets. Let $\phi_{1}, \dots, \phi_{m} \in \Phi$, $t_{1}, \dots, t_{m} \in [0,T]$, $m \in \N$.
We define the \emph{space-time projection map} 
$\Pi^{\phi_{1}, \dots, \phi_{m}}_{t_{1}, \dots, t_{m}}: D_{T}(\Phi'_{\beta}) \rightarrow \R^{m}$ by 
\begin{equation*}
\Pi^{\phi_{1}, \dots, \phi_{m}}_{t_{1}, \dots, t_{m}}(x)  =
(x(t_{1})[\phi_{1}], \dots, x(t_{m})[\phi_{m}]), \quad \forall \, x \in D_{T}(\Phi'_{\beta}).
\end{equation*} 

If $M=\{\phi_{1}, \dots, \phi_{m} \} \subseteq  \Phi $, $I=\{ 
t_{1}, \dots, t_{m} \} \subseteq  [0,T]$ and $B \in \mathcal{B}(\R^{m})$, the set 
\begin{eqnarray*}
\mathcal{Z}(M,I,B) 
& \defeq & \left( \Pi^{\phi_{1}, \dots, \phi_{m}}_{t_{1}, \dots, t_{m}} \right)^{-1}(B) \\
&= & \{ x \in D_{T}(\Phi'_{\beta}): (x(t_{1})[\phi_{1}], \dots, x(t_{m})[\phi_{m}]) \in B\},
\end{eqnarray*}  
is called a \emph{cylinder set} in $D_{T}(\Phi'_{\beta})$ based on $(M,I)$. Moreover the collection 
$ \mathcal{C}(D_{T}(\Phi'_{\beta});M,I)=\{ \mathcal{Z}(M,I,B): B \in \mathcal{B}(\R^{m})\}$ is a $\sigma$-algebra, called the \emph{cylindrical $\sigma$-algebra in $D_{T}(\Phi'_{\beta})$ based on} $(M,I)$. Furthermore, we denote the collection of all the cylinder sets in $D_{T}(\Phi'_{\beta})$ by $\mathcal{Z}(D_{T}(\Phi'_{\beta}))$ and the $\sigma$-algebra they generate by 
$\mathcal{C}(D_{T}(\Phi'_{\beta}))$. We call $\mathcal{C}(D_{T}(\Phi'_{\beta}))$ the \emph{cylindrical $\sigma$-algebra} in $D_{T}(\Phi'_{\beta})$. 

One can easily check that we have the inclusion $\mathcal{C}(D_{T}(\Phi'_{\beta})) \subseteq  \mathcal{B}(D_{T}(\Phi'_{\beta}))$. But the converse is not true in general.  Nevertheless, the following result shows that if we consider $\Phi$ equipped with a weaker countably Hilbertian topology then the identity  holds. 

\begin{lemm} \label{lemmCylinAndBorelSigmaAlgCoincide}
If $\theta$ is a weaker countably Hilbertian topology on  $\Phi$, then $\mathcal{C}(D_{T}((\widetilde{\Phi_{\theta}})'_{\beta})) =  \mathcal{B}(D_{T}((\widetilde{\Phi_{\theta}})'_{\beta}))$. 
\end{lemm}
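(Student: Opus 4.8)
The plan is to sandwich a countably generated, point-separating sub-$\sigma$-algebra between $\mathcal{C}(D_{T}((\widetilde{\Phi_{\theta}})'_{\beta}))$ and $\mathcal{B}(D_{T}((\widetilde{\Phi_{\theta}})'_{\beta}))$ and then exploit the Souslin structure of the space. Since the inclusion $\mathcal{C} \subseteq \mathcal{B}$ is already available, only the reverse inclusion must be produced, and it will come for free once I exhibit a countable family of space-time projections whose generated $\sigma$-algebra separates the points of $D_{s,T}((\widetilde{\Phi_{\theta}})'_{\beta})$.

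First I would fix the countable data. Let $(p_{n}:n\in\N)$ be an increasing sequence of continuous Hilbertian seminorms generating $\theta$, so that $(\widetilde{\Phi_{\theta}})'_{\beta}=\bigcup_{n\in\N}\Phi'_{p_{n}}$ as in \eqref{decompSkorSpaceWeakCountHilb}. Because $\widetilde{\Phi_{\theta}}$ is separable, I can choose a countable set $\{\phi_{k}:k\in\N\}\subseteq\Phi$ that is $\theta$-dense; by continuity of the canonical maps $i_{p_{n}}:\Phi\to\Phi_{p_{n}}$ and the density of their ranges, $\{i_{p_{n}}(\phi_{k}):k\in\N\}$ is then dense in each Hilbert space $\Phi_{p_{n}}$. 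I also fix a countable dense set $\{t_{j}:j\in\N\}\subseteq[0,T]$ that contains the endpoint $T$ (for instance the rationals of $[0,T]$ together with $T$). Set $\mathcal{A}=\sigma\left(x\mapsto x(t_{j})[\phi_{k}]:j,k\in\N\right)$. Each generator is a one-dimensional space-time projection $\Pi^{\phi_{k}}_{t_{j}}$, hence $\mathcal{C}$-measurable, so that $\mathcal{A}\subseteq\mathcal{C}\subseteq\mathcal{B}$, and $\mathcal{A}$ is manifestly countably generated.

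Next I would verify that $\mathcal{A}$ separates points, i.e. that the map $\Psi(x)=\left(x(t_{j})[\phi_{k}]\right)_{j,k\in\N}$ into $\R^{\N\times\N}$ is injective. Assume $x(t_{j})[\phi_{k}]=y(t_{j})[\phi_{k}]$ for all $j,k$. Fixing $j$, the elements $x(t_{j})$ and $y(t_{j})$ lie in a common $\Phi'_{p_{n}}$ (the inclusions $\Phi'_{p_{m}}\hookrightarrow\Phi'_{p_{n}}$ for $m\leq n$ make the union directed), and their difference is a continuous linear functional on $\Phi_{p_{n}}$ that vanishes on the dense set $\{i_{p_{n}}(\phi_{k}):k\in\N\}$; hence $x(t_{j})=y(t_{j})$ in $(\widetilde{\Phi_{\theta}})'_{\beta}$ for every $j$. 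Since $x$ and $y$ are right-continuous and agree on the dense set $\{t_{j}:j\in\N\}$, they agree on $[0,T)$, and they agree at $T$ because $T$ belongs to this set; thus $x=y$. This is the step that combines the c\`{a}dl\`{a}g property with the inclusion of the terminal time.

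Finally I would conclude through the Souslin property. By Proposition \ref{propProperSkoroSpaceWeakerCHT}(1) the space $D_{s,T}((\widetilde{\Phi_{\theta}})'_{\beta})$ is Souslin, so its Borel measurable space is analytic, and by Blackwell's theorem a countably generated sub-$\sigma$-algebra of the Borel $\sigma$-algebra of an analytic space that separates points must coincide with the entire Borel $\sigma$-algebra. Applied to $\mathcal{A}$ this gives $\mathcal{A}=\mathcal{B}(D_{T}((\widetilde{\Phi_{\theta}})'_{\beta}))$, and combined with the sandwich $\mathcal{A}\subseteq\mathcal{C}\subseteq\mathcal{B}$ it forces $\mathcal{C}=\mathcal{B}$, as required. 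I expect the separation step to be the main obstacle -- ensuring that a single fixed countable family $\{\phi_{k}\}$ simultaneously separates the duals of all the $\Phi_{p_{n}}$ and that this transfers from time points to whole paths via right-continuity -- whereas the concluding measure-theoretic input is standard once the Souslin property is in hand.
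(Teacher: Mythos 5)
Your proof is correct, but it follows a genuinely different route from the paper's. The paper's argument is much shorter: it invokes the decomposition \eqref{decompSkorSpaceWeakCountHilb}, $D_{T}((\widetilde{\Phi_{\theta}})'_{\beta}) = \bigcup_{n} D_{T}(\Phi'_{p_{n}})$, notes that each $\Phi'_{p_{n}}$ is a separable metric space so that $\mathcal{C}(D_{T}(\Phi'_{p_{n}})) = \mathcal{B}(D_{T}(\Phi'_{p_{n}}))$ by Corollary 2.4 of \cite{Jakubowski:1986}, and then transfers the equality to the countable increasing union. You instead work directly on the whole space: you build a countably generated sub-$\sigma$-algebra $\mathcal{A} \subseteq \mathcal{C}$ out of one-dimensional space-time projections, check that it separates points (correctly using the directedness of the union $\bigcup_{n}\Phi'_{p_{n}}$, the density of $\{i_{p_{n}}(\phi_{k})\}$ in each $\Phi_{p_{n}}$, right-continuity on a dense set of times, and --- a point worth flagging explicitly, since $x(T-)$ need not equal $x(T)$ --- the inclusion of the terminal time $T$ among the $t_{j}$), and then apply the Blackwell--Mackey theorem (equivalently, the theorem on injective Borel images of Souslin spaces, \cite{BogachevMT}, Theorems 6.7.3 and 6.8.9) on the Souslin space furnished by Proposition \ref{propProperSkoroSpaceWeakerCHT}(1). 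What your approach buys is that it sidesteps the paper's ``it follows that'' step, i.e.\ the transfer of the $\sigma$-algebra identity across the union, which requires checking that the traces of $\mathcal{C}$ and $\mathcal{B}$ on each piece behave as expected; the cost is that you import the heavier Souslin-space machinery where the paper gets by with the elementary separable-metric case plus Jakubowski's cited result. There is no circularity in your use of Proposition \ref{propProperSkoroSpaceWeakerCHT}(1), since that proposition is established independently of this lemma. Both proofs are sound.
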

\begin{prf}
Let $(p_{n}:n \in \N)$ be an increasing sequence of continuous Hilbertian seminorms on $\Phi$ that generates the topology $\theta$ on $\Phi$. From \eqref{decompSkorSpaceWeakCountHilb} and because for each $n \in \N$, $\mathcal{C}(D_{T}(\Phi'_{p_{n}})) =  \mathcal{B}(D_{T}(\Phi'_{p_{n}}))$ (this is a consequence of the fact that each $\Phi'_{p_{n}}$ is separable and metric; see Corollary 2.4 in \cite{Jakubowski:1986}), it follows that $\mathcal{C}(D_{T}((\widetilde{\Phi_{\theta}})'_{\beta})) =  \mathcal{B}(D_{T}((\widetilde{\Phi_{\theta}})'_{\beta}))$. 
\end{prf}

\begin{defi} A \emph{cylindrical (probability) measure} on $D_{T}(\Phi'_{\beta})$ is a map $\mu: \mathcal{Z}(D_{T}(\Phi'_{\beta})) \rightarrow [0,+\infty]$ such that for each finite $M \subseteq \Phi$ and $I \subseteq [0,T]$, the restriction of $\mu$ to  $\mathcal{C}(D_{T}(\Phi'_{\beta});M,I)$ is a (probability) measure. 
\end{defi}

Let $\mu$ be a cylindrical probability measure on $D_{T}(\Phi'_{\beta})$. For $t \in [0,T]$, consider the \emph{time projection} $\Pi_{t}: D_{T}(\Phi'_{\beta}) \rightarrow \Phi'_{\beta}$ by $x \mapsto x(t)$. We define the \emph{Fourier transform} $\widehat{\mu}_{t}$ of the measure $\mu$ at time $t$ as the Fourier transform of the measure $\mu_{t} \defeq \mu \circ \Pi_{t}^{-1}$ on $\Phi'_{\beta}$, i.e. the function $\widehat{\mu}_{t}: \Phi \rightarrow \C$ is given by 
$$ \widehat{\mu}_{t}(\phi)= \int_{D_{T}(\Phi'_{\beta})} \, e^{ix(t)[\phi]} d\mu, \quad \forall \, \phi \in \Phi. $$

Now, note that as $\mathcal{C}(D_{T}(\Phi'_{\beta})) \subseteq  \mathcal{B}(D_{T}(\Phi'_{\beta}))$, a Borel probability measure on $D_{T}(\Phi'_{\beta})$ clearly defines a cylindrical probability measure on $D_{T}(\Phi'_{\beta})$. When $\Phi$ is a nuclear space, sufficient conditions for a cylindrical probability measure on $D_{T}(\Phi'_{\beta})$ to extend to a Borel probability measure on $D_{T}(\Phi'_{\beta})$ in terms of its Fourier transforms will be given in Theorem \ref{theoMinlosSkorokhodSpace}. 

\begin{defi} A \emph{cylindrical random variable} in $D_{T}(\Phi'_{\beta})$ (defined on a probability space $\ProbSpace$) is a linear map $X: \Phi \rightarrow L^{0}(\Omega, \mathcal{F}, \Prob;D_{T}(\R))$, where $L^{0}(\Omega, \mathcal{F}, \Prob;D_{T}(\R))$ is the set of $D_{T}(\R)$-valued random variables defined on $\ProbSpace$. 
\end{defi} 

If $\mathcal{Z}(M,I,B)$ is a cylinder set in $D_{T}(\Phi'_{\beta})$ with $M=\{ \phi_{1}, \dots, \phi_{m}\} \subseteq \Phi$, $I=\{ t_{1}, \dots, t_{m}\} \subseteq [0,T]$ and $B \in \mathcal{B}(R^{m})$,  let 
\begin{eqnarray*}
\mu_{X}(\mathcal{Z}(M,I,B)) 
& \defeq & \Prob \left( ( X(\phi_{1})(t_{1}), \dots, X(\phi_{m})(t_{m}) \in B  \right) \\
& = & \Prob \circ X^{-1} \circ \left( \Pi^{\phi_{1},\dots, \phi_{m}}_{t_{1}, \dots, t_{m}}\right)^{-1}(B). 
\end{eqnarray*} 
The map $\mu_{X}$ is called the \emph{cylindrical distribution} of $X$ and it is a  cylindrical probability measure on $D_{T}(\Phi'_{\beta})$. The Fourier transform $\widehat{\mu}_{X,t}$ at time $t$ of the cylindrical random variable $X$ in $D_{T}(\Phi'_{\beta})$ is that of its cylindrical measure $\mu_{X}$ at time $t$. 

As the next results shows,  to every cylindrical probability measure on $D_{T}(\Phi'_{\beta})$ there corresponds a canonical  cylindrical random variable in $D_{T}(\Phi'_{\beta})$.  

\begin{theo}\label{theoExistCanoCylRandVariable}
Let $\mu$ be a cylindrical probability measure on $D_{T}(\Phi'_{\beta})$. Then, there exists a cylindrical random variable $X$ in $D_{T}(\Phi'_{\beta})$ defined on some probability space $\ProbSpace$ whose cylindrical distribution is $\mu$. 
\end{theo}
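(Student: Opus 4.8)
The plan is to exhibit $\mu$ as the cylindrical distribution of the canonical coordinate family produced by Kolmogorov's extension theorem, and then to upgrade the coordinate processes to genuine $D_{T}(\R)$-valued random variables. First I would observe that a cylindrical probability measure $\mu$ on $D_{T}(\Phi'_{\beta})$ is precisely a projective family of finite-dimensional laws indexed by finite subsets of $[0,T]\times\Phi$: to a finite collection of pairs $(t_{1},\phi_{1}),\dots,(t_{m},\phi_{m})$ I assign the Borel probability measure $\nu_{(t_{i},\phi_{i})}$ on $\R^{m}$ given by $\nu_{(t_{i},\phi_{i})}(B)=\mu(\mathcal{Z}(\{\phi_{i}\},\{t_{i}\},B))$. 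The requirement that the restriction of $\mu$ to every $\mathcal{C}(D_{T}(\Phi'_{\beta});M,I)$ be a genuine probability measure is exactly the Kolmogorov consistency (invariance under permutation of coordinates and under marginalization) of this family.

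Applying Kolmogorov's extension theorem over the index set $[0,T]\times\Phi$ yields a probability space $\ProbSpace$ and a family $\{\xi_{t,\phi}\}_{(t,\phi)\in[0,T]\times\Phi}$ of real random variables realizing the laws $\nu_{(t_{i},\phi_{i})}$. Setting $X(\phi)=\{\xi_{t,\phi}\}_{t\in[0,T]}$ for each $\phi$, the identity $\mu_{X}=\mu$ on cylinder sets is immediate from the construction. Linearity of $\phi\mapsto X(\phi)$ as a map into $L^{0}$ is forced by consistency: since every $x\in D_{T}(\Phi'_{\beta})$ is linear in the test function, the marginal $\nu_{(t,a\phi+b\psi),(t,\phi),(t,\psi)}$ is carried by $\{(w,u,v):w=au+bv\}$, so $\xi_{t,a\phi+b\psi}=a\,\xi_{t,\phi}+b\,\xi_{t,\psi}$ $\Prob$-a.s.\ for each $t$, which is the desired $L^{0}$ identity $X(a\phi+b\psi)=aX(\phi)+bX(\psi)$.

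The main obstacle, and the real content of the statement, is that the raw Kolmogorov coordinate process $t\mapsto\xi_{t,\phi}$ has arbitrary paths, whereas the definition of a cylindrical random variable demands that each $X(\phi)$ be a bona fide $D_{T}(\R)$-valued random variable, i.e.\ have c\`{a}dl\`{a}g paths. To secure this I would not work on $\R^{[0,T]\times\Phi}$ but build the probability space as a product of Skorokhod spaces: the key step is to show that for each finite $F\subseteq\Phi$ the restriction of $\mu$ to the $F$-cylinders extends to a Borel probability measure $P_{F}$ on the Polish space $\prod_{\phi\in F}D_{T}(\R)$ --- here one uses that $D_{T}(\R)$ is Polish and that its cylindrical and Borel $\sigma$-algebras coincide (cf.\ Lemma \ref{lemmCylinAndBorelSigmaAlgCoincide} and Corollary 2.4 in \cite{Jakubowski:1986}), the essential point being that the finite-dimensional laws of $\mu$, living inside the c\`{a}dl\`{a}g space $D_{T}(\Phi'_{\beta})$, assemble into a countably additive path measure. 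Granting this, the family $\{P_{F}\}$ is projective, Kolmogorov's extension theorem for Polish-space-valued coordinates produces $\Prob$ on $\prod_{\phi\in\Phi}D_{T}(\R)$, and the coordinate projections $X(\phi)$ are automatically $D_{T}(\R)$-valued; linearity and $\mu_{X}=\mu$ then follow exactly as above. I expect essentially all of the difficulty to be concentrated in this countable-additivity/realizability step.
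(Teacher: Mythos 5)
Your construction is essentially the paper's own proof: the paper likewise builds, for each finite $F\subseteq\Phi$, a probability measure $\mu_{F}$ on $\prod_{\phi\in F}D_{T}(\R)$ out of the $F$-cylinders of $\mu$, verifies the projective (consistency) condition on cylinder sets, applies Kolmogorov's extension theorem over the index set $\Phi$ with the Borel space $(D_{T}(\R),\mathcal{B}(D_{T}(\R)))$ as coordinate space, takes $X(\phi)$ to be the coordinate projection $\pi_{\phi}$, and obtains linearity exactly as you indicate, from the degenerate three-fold marginal for $\{\lambda_{1}\psi_{1},\lambda_{2}\psi_{2},\lambda_{1}\psi_{1}+\lambda_{2}\psi_{2}\}$ being carried by the plane $u+v-w=0$ in $D_{T}(\R)^{3}$. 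The one step you defer (``granting this'': that the $F$-cylinder data extends to a countably additive Borel measure on $\prod_{\phi\in F}D_{T}(\R)$) is the step the paper disposes of in a single sentence, by asserting that the definition of $\mu_{F}$ on the cylinder sets of $\Omega^{F}$ extends to $\mathscr{F}^{F}$ because those cylinder sets generate $\mathscr{F}^{F}$ -- so you have correctly isolated the only non-formal point of the argument, and the paper's treatment of it is no more detailed than yours.
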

\begin{prf}
For the proof we will construct a compatible family of measures that satisfies the Kolmogorov Extension Theorem. 

For each $\phi \in \Phi$, let $(\Omega_{\phi},\mathscr{F}_{\phi})$ be the Borel space $(D_{T}(\R), \mathcal{B}(D_{T}(\R)))$. For any $F \subseteq \Phi$, we write $\Omega^{F}=\times_{\phi \in F} \Omega_{\phi}$ and $\mathscr{F}^{F}=\otimes_{\phi \in F} \mathscr{F}_{\phi}$. Furthermore, we define $\pi_{F}: D_{T}(\Phi'_{\beta}) \rightarrow \Omega^{F}$ by $\pi_{F}(x)=(x[\phi])_{\phi  \in F}$ $\forall x \in D_{T}(\Phi'_{\beta})$. For finite $F=\{ \phi_{1}, \dots, \phi_{m} \} \subseteq \Phi$, we also use the notation $\pi_{\phi_{1}, \dots, \phi_{m}}$ for $\pi_{F}$. 
If $G \subseteq F \subseteq \Phi$, we denote by $\pi_{F,G}$ the map that takes $y \in \Omega^{F}$ into its restriction to $G$.
Clearly, the maps $\pi_{F}$ and $\pi_{F,G}$ are measurable.

Fix $F=\{ \phi_{1},\dots, \phi_{m} \} \subseteq \Phi$. For every $t_{1}, \dots, t_{m}  \in [0,T]$, define $\pi_{t_{1}, \dots, t_{m}}: \Omega^{F} \rightarrow \R^{m}$ by $\pi_{t_{1}, \dots, t_{m}}(y)=(y_{1}(t_{1}), \dots, y_{m}(t_{m}))$ for all $y=(y_{1}, \dots, y_{m}) \in \Omega^{F}$. It is a well-known fact that $\mathcal{B}(D_{T}(\R))= \sigma( \{ y \in D_{T}(\R): y(t) \in B\}: t \in [0,T], B \in \mathcal{B}(\R))$, hence it is clear that the family of cylinder sets $\{ (\pi_{t_{1}, \dots, t_{m}})^{-1} (B_{1} \times \dots \times B_{m}): t_{1}, \dots, t_{m} \in [0,T]$, $B_{1}, \dots, B_{m} \in \mathcal{B}(\R) \}$ in $\Omega^{F}$ generates $\mathscr{F}^{F}$. 

Now, define $\mu_{F}$ on $\Omega^{F}$ as follows: first, for $I=\{ t_{1}, \dots, t_{m} \} \subseteq [0,T]$, $B_{1}, \dots, B_{m} \in \mathcal{B}(\R)$, let 
\begin{equation} \label{defiMeasureMuFExtenTheorem}
\mu_{F} ((\pi_{t_{1}, \dots, t_{m}})^{-1} (B_{1} \times \dots \times B_{m}))=\mu(\mathcal{Z}(F,I,B_{1} \times \dots \times B_{m})).
\end{equation}
As the cylinder sets in $\Omega^{F}$ generates $\mathscr{F}^{F}$, the above definition of $\mu_{F}$ extends to a probability measure (that we denote again by $\mu_{F}$) on $(\Omega^{F}, \mathscr{F}^{F})$. 

Now, we will show that the measures $(\mu_{F}: F \subseteq \Phi, \, F \mbox{ finite})$ satisfy the consistency condition. We start by showing it for the cylinder sets. 
Let $G =\{ \varphi_{1}, \dots, \varphi_{n}\} \subseteq F=\{\phi_{1}, \dots, \phi_{m}\} \subseteq \Phi $, and consider $I=\{ t_{1}, \dots, t_{n}\} \subseteq J=\{s_{1}, \dots, s_{m} \} \subseteq [0,T]$ and $B_{1}, \dots, B_{n} \in \mathcal{B}(\R)$. For $i=1, \dots, n$, let $s_{j_{1}}, \dots, s_{j_{n}}$ given by $s_{j_{i}}=t_{i}$, and for $j=1, \dots, m$, let $A_{j} = \R$ if $s_{j} \notin \{ s_{j_{1}}, \dots, s_{j_{n}} \}$ and $A_{j} =B_{j}$ if $s_{j} \in \{ s_{j_{1}}, \dots, s_{j_{n}} \}$. Then, we have
\begin{flalign*}
& \mu_{G} ((\pi_{s_{j_{1}}, \dots, s_{j_{n}}})^{-1} (B_{1} \times \dots \times B_{n})) \\
& =  \mu (\mathcal{Z}(G,I,B_{1} \times \dots \times B_{n}))\\
& =  \mu (\mathcal{Z}(F,J,A_{1} \times \dots \times A_{m}))\\
& =  \mu_{F} ((\pi_{s_{1}, \dots, s_{m}})^{-1} (A_{1} \times \dots \times A_{m})) \\
& =  \mu_{F} (\pi_{F,G}^{-1} ((\pi_{s_{j_{1}}, \dots, s_{j_{n}}})^{-1} (B_{1} \times \dots \times B_{n}))).
\end{flalign*}
Now, because the above equality holds for any cylinder set, then it also holds for any set in $\mathscr{F}^{G}$, showing that the consistency condition $\mu_{G} = \mu_{F} \circ \pi_{F,G}^{-1} $ is satisfied for $G \subseteq F \subseteq \Phi$ with $G$ and $F$ finite. 

Therefore, by the Kolmogorov's extension theorem (see \cite{Parthasarathy}, Theorem 5.1, p.144), there exists a unique probability measure $\Prob$ on $(\Omega^{\Phi},\mathcal{F}^{\Phi})\defeq (\times_{\phi \in \Phi} D_{T}(\R), \otimes_{\phi \in  \Phi} \mathcal{B}(D_{T}(\R)))$ such that for each finite $F \subseteq \Phi$, 
\begin{equation} \label{defiProbMeasCanoCyliRV}
\mu_{F}(\Gamma)=\Prob (\pi_{F}^{-1} (\Gamma)), \quad \forall \, \Gamma \in \mathcal{F}^{F}.
\end{equation} 
In particular, for every $F=\{\phi_{1}, \dots, \phi_{n}\} \subseteq \Phi $, $I=\{ t_{1}, \dots, t_{n}\} \subseteq [0,T]$ and $B_{1}, \dots, B_{n} \in \mathcal{B}(\R)$, from the above equality and \eqref{defiMeasureMuFExtenTheorem} we have that
\begin{eqnarray}
\mu(Z(F,I,B_{1} \times \dots \times B_{n}))
& = & \mu_{F}(\pi_{t_{1},\dots, t_{n}}^{-1} (B_{1} \times \dots \times B_{n})) \nonumber \\
& = &\Prob ( \pi_{\phi_{1}, \dots, \phi_{n}}^{-1} (\pi_{t_{1},\dots, t_{n}}^{-1} (B_{1} \times \dots \times B_{n}))) \nonumber \\
& = &\Prob ( (\Pi^{\phi_{1}, \dots, \phi_{n}}_{t_{1},\dots, t_{n}})^{-1} (B_{1} \times \dots \times B_{n})) \label{extensionMeasureOnCylinders}  
%& = & \Prob (y \in D_{T}(E'_{\beta}): y[\phi_{1}](t_{1}) \in B_{1}, \dots, y[\phi_{n}](t_{n}) \in B_{n}). 
\end{eqnarray} 
Our next step is to define a cylindrical random variable in $D_{T}(\Phi'_{\beta})$ whose cylindrical distribution is $\mu$. Let $X: \Phi \rightarrow L^{0}(\Omega^{\Phi}, \mathcal{F}^{\Phi}, \Prob;D_{T}(\R))$ be defined in the following way: for every $\phi \in \Phi$, let $X(\phi) \defeq \pi_{\phi}$, i.e. $X(\phi)(y)= y(\phi) \in D_{T}(\R)$ for each $y = (y(\phi))_{\phi \in \Phi} \in \Omega^{\Phi}$. Clearly, each $X(\phi)$ is $\mathscr{F}^{\Phi}/\mathcal{B}(D_{T}(\R))$-measurable and therefore $X$ is well-defined. Moreover, the fact that $\mu$ is the cylindrical distribution of $X$ is a direct consequence of \eqref{extensionMeasureOnCylinders}.  

Now we show  that $X$ is linear. Let $\psi_{1}, \psi_{2} \in \Phi$ and $\lambda_{1}, \lambda_{2} \in \R$. Consider the subset $F=\{ \lambda_{1} \psi_{1},  \lambda_{2} \psi_{2}, \lambda_{1} \psi_{1}+ \lambda_{2} \psi_{2}\}$ of $\Phi$. Then $\Omega^{F}=D_{T}(\R)^{3}$ and $\pi_{F}: D_{T}(\Phi'_{\beta}) \rightarrow D_{T}(\R)^{3}$ is given by $x \mapsto  (\lambda_{1} x[\psi_{1}], \lambda_{2} x[\psi_{2}],x[\lambda_{1} \psi_{1}+ \lambda_{2} \psi_{2}])$. If $\sigma: D_{T}(\R)^{3} \rightarrow D_{T}(\R)$ is defined as $(u,v,w) \mapsto u+v-w$, then $\sigma$ is continuous and also $\sigma \circ \pi_{F}=0 \in D_{T}(\R)$. Thus for $A \in \mathcal{B}(D_{T}(\R))$, $\mu_{F} ( \sigma^{-1}(A))= \mu \circ  \pi_{F}^{-1}( \sigma^{-1}(A))$ takes value $0$ if $0 \notin A$ and takes value $1$ if $0 \in A$. Hence $\mu_{F} $ is supported by the plane $\sigma^{-1}(\{ 0 \})=\{ (u,v,w): u+v-w=0\}$ of $\Omega^{F}=D_{T}(\R)^{3}$. But then  we have from \eqref{defiProbMeasCanoCyliRV} that
\begin{flalign*}
& \Prob \left( \lambda_{1} X(\psi_{1})+ \lambda_{2} X(\psi_{2})-  X(\lambda_{1}\psi_{1}+\lambda_{2}\psi_{2}) = 0  \right) \\
& = \Prob \left( (\lambda_{1} X(\psi_{1}), \lambda_{2} X(\psi_{2}),  X(\lambda_{1}\psi_{1}+\lambda_{2}\psi_{2}) ) \in \sigma^{-1}(\{ 0 \}) \right)\\
& = \mu_{F} ( \sigma^{-1}(\{ 0 \}))=1.
\end{flalign*}
This proves that $X$ is a cylindrical random variable in $D_{T}(\Phi'_{\beta})$ defined on $(\Omega^{\Phi}, \mathcal{F}^{\Phi}, \Prob)$ with cylindrical measure $\mu$.
\end{prf}

%\begin{rema}
%We are not aware of any previous work that considers the  concepts of cylindrical measures and cylindrical random variables in the Skorokhod space introduced in this section. Usually, the cylinder sets in the Skorokhod space are defined with respect to time projections only (see e.g. \cite{EthierKurtz, Jakubowski:1986}). However, the use of space-time projections and the cylindrical $sigma$-algebra they generate have proven useful tools for the case of duals of nuclear spaces. This is the case of the original work of Mitoma \cite{Mitoma:1983} and the later work of Fouque \cite{Fouque:1984} for the J1 topology, and the more recent work of Ledger \cite{Ledger:2016} for the M1 topology.
%\end{rema}

\subsection{Measurability of random elements in $D_{T}(\Phi'_{\beta})$}

\begin{assu}
In this section and unless otherwise specified, all the random elements will be defined on a given probability space $\ProbSpace$. 
\end{assu}

We start this section by enumerating the relationship between the different types of random elements in $D_{T}(\Phi'_{\beta})$. 
\begin{enumerate}
\item Let $X$ be a $D_{T}(\Phi'_{\beta})$-valued random variable. Clearly $X$  determines a $\Phi'_{\beta}$-valued c\`{a}dl\`{a}g process $\{ X_{t} \}_{t \in [0,T]} $ given by  $X_{t}(\omega) \defeq X(\omega)(t)$ $\forall \,  t \in [0,T], \omega \in \Omega$. 
\item If $X$ is a cylindrical random variable in $D_{T}(\Phi'_{\beta})$, the linear map  $\phi \mapsto \{ X(\phi)(t) \}_{t \in [0,T]}$ is a cylindrical processes in $\Phi'$. Conversely, if $X=\{ X_{t} \}_{t \in [0,T]}$ is a cylindrical process such that for each $\phi \in \Phi$ the real-valued process $X(\phi)=\{ X_{t}(\phi) \}_{t \in [0,T]}$ is c\`{a}dl\`{a}g, then $X$ is a cylindrical random variable in $D_{T}(\Phi'_{\beta})$. 
\item Let $X=\{X_{t}\}_{t \in [0,T]}$ be a $\Phi'_{\beta}$-valued c\`{a}dl\`{a}g process. In this case $X$ defines two objects. First, for every $\phi \in \Phi$, the \emph{space projection} $\Pi_{\phi}: D_{T}(\Phi'_{\beta}) \rightarrow D_{T}(\R)$ maps $X$ into $X[\phi]= \Pi_{\phi} (X) \defeq \{ X_{t}[\phi]\}_{t \in [0,T]}$. This way, $X$ defines a cylindrical random variable in $D_{T}(\Phi'_{\beta})$.  Second, $X$ defines a map $X:\Omega \rightarrow D_{T}(\Phi'_{\beta})$ by means of its paths $\omega \mapsto (t \mapsto X(\omega)(t) \defeq X_{t}(\omega))$. This map is $\mathcal{F}/\mathcal{C}(D_{T}(\Phi'_{\beta}))$-measurable. 
However, because the inclusion
$\mathcal{C}(D_{T}(\Phi'_{\beta})) \subseteq  \mathcal{B}(D_{T}(\Phi'_{\beta}))$ might be strict, then $X$ is not necessarily a $D_{T}(\Phi'_{\beta})$-valued random variable. 
\end{enumerate}

Given a cylindrical process $X$ in $\Phi'$, the next result  gives sufficient conditions for the existence of a version  that is a $D_{T}(\Phi'_{\beta})$-valued random variable. 

\begin{theo}[Regularization theorem on Skorokhod space]  \label{theoRegulaTheoSkorokSpace}
Let $\Phi$ be a nuclear space. Let $X=\{X_{t} \}_{t \in [0,T]}$ be a cylindrical process in $\Phi'$ (e.g. a $\Phi'_{\beta}$-valued process) such that: 
\begin{enumerate}
\item For each $\phi \in \Phi$, the real-valued process $X(\phi)=\{ X_{t}(\phi) \}_{t \in [0,T]}$ has a c\`{a}dl\`{a}g version.
\item The family $\{ X_{t}: t \in [0,T] \}$ of linear maps from $\Phi$ into $L^{0} \ProbSpace$ is equicontinuous.  
\end{enumerate}
Then, there exist a weaker countable Hilbertian topology $\theta$ on $\Phi$ and a  $D_{T}((\widetilde{\Phi_{\theta}})'_{\beta})$-valued random variable $Y$ such that for each $\phi \in \Phi$ the real-valued c\`{a}dl\`{a}g processes $X(\phi)$ and $Y[\phi]$ are indistinguishable. 
In particular, $Y$ is a $D_{T}(\Phi'_{\beta})$-valued random variable whose probability distribution is a Radon measure on $D_{T}(\Phi'_{\beta})$. 
\end{theo}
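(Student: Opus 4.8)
The plan is to reduce everything to the Regularization Theorem \ref{theoRegularizationTheoremCadlagContinuousVersion} and then to upgrade the $(\widetilde{\Phi_{\theta}})'_{\beta}$-valued c\`{a}dl\`{a}g \emph{process} it produces into a genuine $D_{T}((\widetilde{\Phi_{\theta}})'_{\beta})$-valued \emph{random variable}. First I would pass from the time interval $[0,T]$ to $[0,\infty)$ by the constant extension $X_{t} \defeq X_{T}$ for $t>T$: under this extension hypothesis (1) is preserved (extend each c\`{a}dl\`{a}g version by its value at $T$), and hypothesis (2) continues to hold on every $[0,S]$, since for $S>T$ the family $\{ X_{t}: t \in [0,S]\}$ coincides with $\{ X_{t}: t \in [0,T]\}$. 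Theorem \ref{theoRegularizationTheoremCadlagContinuousVersion} then furnishes a weaker countably Hilbertian topology $\theta$ on $\Phi$ and a $(\widetilde{\Phi_{\theta}})'_{\beta}$-valued c\`{a}dl\`{a}g process $Y=\{ Y_{t} \}_{t \geq 0}$ with $Y[\phi]$ a version of $X(\phi)$ for every $\phi \in \Phi$. Restricting to $t \in [0,T]$ and discarding a $\Prob$-null set, I may assume every sample path $t \mapsto Y_{t}(\omega)$ lies in $D_{T}((\widetilde{\Phi_{\theta}})'_{\beta})$.

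The heart of the argument is the measurability of the induced map $Y: \Omega \rightarrow D_{T}((\widetilde{\Phi_{\theta}})'_{\beta})$, $\omega \mapsto (t \mapsto Y_{t}(\omega))$. Exactly as observed earlier in this section for $\Phi'_{\beta}$-valued c\`{a}dl\`{a}g processes (with $(\widetilde{\Phi_{\theta}})'_{\beta}$ now in place of $\Phi'_{\beta}$), this path map is automatically $\mathscr{F}/\mathcal{C}(D_{T}((\widetilde{\Phi_{\theta}})'_{\beta}))$-measurable, the space projections $\Pi_{\phi}Y = Y[\phi]$ being $D_{T}(\R)$-valued random variables. The key point, and the reason for descending to the weaker topology $\theta$, is Lemma \ref{lemmCylinAndBorelSigmaAlgCoincide}, which gives $\mathcal{C}(D_{T}((\widetilde{\Phi_{\theta}})'_{\beta})) = \mathcal{B}(D_{T}((\widetilde{\Phi_{\theta}})'_{\beta}))$; hence $Y$ is in fact $\mathscr{F}/\mathcal{B}(D_{T}((\widetilde{\Phi_{\theta}})'_{\beta}))$-measurable, i.e. a bona fide $D_{T}((\widetilde{\Phi_{\theta}})'_{\beta})$-valued random variable. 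Because both $X(\phi)$ and $Y[\phi]$ are real-valued c\`{a}dl\`{a}g processes that are versions of one another, they are indistinguishable, which is the asserted relation between $X$ and $Y$.

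For the final assertion I would compose $Y$ with the canonical inclusion $i: D_{T}((\widetilde{\Phi_{\theta}})'_{\beta}) \rightarrow D_{T}(\Phi'_{\beta})$, which is continuous by Proposition \ref{propProperSkoroSpaceWeakerCHT}(3) and therefore Borel measurable; thus $i \circ Y$ is a $D_{T}(\Phi'_{\beta})$-valued random variable. To see its law is Radon, note that $D_{T}((\widetilde{\Phi_{\theta}})'_{\beta})$ is Souslin by Proposition \ref{propProperSkoroSpaceWeakerCHT}(1), so the distribution $\mu_{Y}$ of $Y$ on it is automatically Radon (every finite Borel measure on a Souslin space is Radon). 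The pushforward $i_{\ast}\mu_{Y}$ is then Radon as well: for $\Gamma \in \mathcal{B}(D_{T}(\Phi'_{\beta}))$ and $\epsilon>0$, inner regularity of $\mu_{Y}$ yields a compact $K' \subseteq i^{-1}(\Gamma)$ with $\mu_{Y}(i^{-1}(\Gamma) \backslash K')<\epsilon$, and since $i$ is a continuous injection, $i(K')$ is compact, contained in $\Gamma$, and satisfies $i_{\ast}\mu_{Y}(\Gamma \backslash i(K')) = \mu_{Y}(i^{-1}(\Gamma) \backslash K')<\epsilon$.

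The main obstacle is precisely the measurability step: a c\`{a}dl\`{a}g $(\widetilde{\Phi_{\theta}})'_{\beta}$-valued process only comes with cylindrical (space-projection) measurability for free, and the inclusion $\mathcal{C} \subseteq \mathcal{B}$ can be strict on $D_{T}(\Phi'_{\beta})$, so there is no direct route to Borel measurability over the original nuclear dual. The entire strategy of working with the completion $\widetilde{\Phi_{\theta}}$ of a weaker countably Hilbertian topology is what forces $\mathcal{C}$ and $\mathcal{B}$ to coincide (Lemma \ref{lemmCylinAndBorelSigmaAlgCoincide}) and thereby closes this gap; once that identification is in hand, the Souslin property and the continuity of $i$ take care of transporting the conclusion back to $D_{T}(\Phi'_{\beta})$.
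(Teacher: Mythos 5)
Your proposal is correct and follows essentially the same route as the paper: apply Theorem \ref{theoRegularizationTheoremCadlagContinuousVersion} to obtain $\theta$ and the $(\widetilde{\Phi_{\theta}})'_{\beta}$-valued c\`{a}dl\`{a}g version $Y$, invoke Lemma \ref{lemmCylinAndBorelSigmaAlgCoincide} to upgrade cylindrical to Borel measurability, use the Souslin property from Proposition \ref{propProperSkoroSpaceWeakerCHT}(1) for Radonness, and push forward along the continuous inclusion of Proposition \ref{propProperSkoroSpaceWeakerCHT}(3). The only additions are the explicit constant extension to $[0,\infty)$ (a harmless bookkeeping step the paper leaves implicit) and the spelled-out inner-regularity argument for the pushforward measure.
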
     
\begin{prf} From the properties \emph{(1)} and \emph{(2)} of $X$ and Theorem \ref{theoRegularizationTheoremCadlagContinuousVersion}, there exist a countably Hilbertian topology $\theta$ on $\Phi$ and a $(\widetilde{\Phi_{\theta}})'_{\beta}$-valued c\`{a}dl\`{a}g process $Y= \{ Y_{t} \}_{t \geq 0}$ that is a version of $X$ (unique up to indistinguishable versions). 

The mapping $\omega \mapsto Y_{t}(\omega)$ from $\Omega$ into $D_{T}((\widetilde{\Phi_{\theta}})'_{\beta})$ is $\mathcal{F}/\mathcal{C}(D_{T}((\widetilde{\Phi_{\theta}})'_{\beta})$-measurable. But from Lemma \ref{lemmCylinAndBorelSigmaAlgCoincide} it is also 
$\mathcal{F}/\mathcal{B}(D_{T}((\widetilde{\Phi_{\theta}})'_{\beta}))$-measurable. Thus $Y$ defines a $D_{T}((\widetilde{\Phi_{\theta}})'_{\beta})$-valued random variable and its probability distribution on $D_{T}((\widetilde{\Phi_{\theta}})'_{\beta})$ is Radon because this space is Suslin (Proposition \ref{propProperSkoroSpaceWeakerCHT}(1)). Finally, because the inclusion map from $D_{T}((\widetilde{\Phi_{\theta}})'_{\beta})$ into $D_{T}(\Phi'_{\beta})$ is continuous (Proposition \ref{propProperSkoroSpaceWeakerCHT}(3)) then $Y$ is also a $D_{T}(\Phi'_{\beta})$-valued random variable whose probability distribution on $D_{T}(\Phi'_{\beta})$ is Radon. 
\end{prf}

As the following result shows, we can relax some of the conditions in Theorem \ref{theoRegulaTheoSkorokSpace} when the space $\Phi$ is ultrabornological. 

\begin{coro}\label{coroConditionRandVariSkoSpaceUltrab}
Assume that $\Phi$ is an ultrabornological nuclear space. Let $X=\{ X_{t}\}_{t \in [0,T]}$ be a $\Phi'_{\beta}$-valued process such that: 
\begin{enumerate}
\item For each $\phi \in \Phi$, the real-valued process $X[\phi]=\{ X_{t}[\phi] \}_{t \geq 0}$ has a c\`{a}dl\`{a}g version.
\item For each $t \in [0,T]$, the probability distribution $\mu_{t}$ of $X_{t}$ is a Radon measure on $\Phi'_{\beta}$.  
\end{enumerate}
Then, there exist a weaker countable Hilbertian topology $\theta$ on $\Phi$ and a  $(\widetilde{\Phi_{\theta}})'_{\beta}$-valued c\`{a}dl\`{a}g process $Y= \{ Y_{t} \}_{t \geq 0}$ that is a version of $X$ and is such that $Y$ is also a $D_{T}((\widetilde{\Phi_{\theta}})'_{\beta})$-valued random variable. In particular, $Y$ is a $D_{T}(\Phi'_{\beta})$-valued random variable whose probability distribution is a Radon measure on $D_{T}(\Phi'_{\beta})$. 
\end{coro}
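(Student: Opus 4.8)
The plan is to reduce the statement to the regularization theorem on Skorokhod space (Theorem \ref{theoRegulaTheoSkorokSpace}) by verifying its two hypotheses. Hypothesis (1) of that theorem coincides verbatim with hypothesis (1) here, so the entire content of the argument is to produce hypothesis (2), namely the equicontinuity of the family $\{X_{t}: t \in [0,T]\}$ of linear maps from $\Phi$ into $L^{0}\ProbSpace$. This is exactly the place where the strengthening from ``nuclear'' (as in the theorem) to ``ultrabornological nuclear'' (as in the corollary) is spent: the weaker assumptions here, a Radon marginal at each time and a \cadlag real projection for each $\phi$, must be upgraded to equicontinuity. Once this is achieved, Theorem \ref{theoRegulaTheoSkorokSpace} delivers the weaker countable Hilbertian topology $\theta$ and the process $Y$ with all the asserted properties, and the ``in particular'' statements are read off directly from its conclusion.

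First I would use hypothesis (2) to obtain continuity of each individual $X_{t}$. Since $\Phi$ is ultrabornological it is barrelled, so every compact subset of $\Phi'_{\beta}$ is equicontinuous (\cite{Schaefer}, Theorem IV.5.2); combined with nuclearity, for each $\epsilon>0$ the Radon property of $\mu_{t}$ yields a compact $K\subseteq \Phi'_{\beta}$ with $\mu_{t}(K^{c})<\epsilon$ and a continuous Hilbertian seminorm $p$ on $\Phi$ with $K\subseteq B_{p'}(1)$. Splitting the integral defining the Fourier transform $\widehat{\mu}_{t}$ over $K$ and $K^{c}$ and using $\abs{1-e^{ir}}\leq \min(\abs{r},2)$ gives
\[
\abs{1-\widehat{\mu}_{t}(\phi)} \leq \int_{K}\abs{f[\phi]}\,\mu_{t}(df) + 2\,\mu_{t}(K^{c}) \leq p(\phi)+2\epsilon ,
\]
so $\widehat{\mu}_{t}$ is continuous at the origin, which is equivalent to continuity of the cylindrical map $X_{t}\colon \Phi \to L^{0}\ProbSpace$. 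Crucially, the seminorm $p=p_{t}$ and the error $\epsilon$ here depend on $t$; removing this dependence is precisely the difficulty below.

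Next I would record that the family is pointwise bounded in $L^{0}\ProbSpace$. Fixing $\phi$ and passing to the \cadlag version of $X(\phi)$ supplied by hypothesis (1), its sample paths are bounded on $[0,T]$ almost surely, so $\sup_{t\in[0,T]}\Prob(\abs{X_{t}[\phi]}>R)\leq \Prob(\sup_{t\in[0,T]}\abs{X_{t}[\phi]}>R)\to 0$ as $R\to\infty$; that is, $\{X_{t}[\phi]:t\in[0,T]\}$ is bounded in $L^{0}\ProbSpace$ for every $\phi$. I then expect the \emph{main obstacle} to be the promotion of pointwise continuity and pointwise boundedness to genuine equicontinuity, uniformly in $t$. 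The standard Banach--Steinhaus argument for barrelled domains fails here because the target $L^{0}\ProbSpace$ is not locally convex, so the closed balanced absorbing sets built from its (non-convex) neighborhoods of $0$ need not be barrels; barrelledness alone is therefore insufficient. This is exactly what the ultrabornological hypothesis is designed to repair: writing $\Phi$ as an inductive limit of Banach spaces and restricting to each Banach (hence Baire) step, the Baire-category form of the uniform boundedness principle converts pointwise boundedness into equicontinuity even for the non-locally-convex $L^{0}$-valued maps, and nuclearity then consolidates the resulting bound into a single continuous Hilbertian seminorm dominating $\{X_{t}:t\in[0,T]\}$. With equicontinuity established, both hypotheses of Theorem \ref{theoRegulaTheoSkorokSpace} hold and its conclusion gives $\theta$, the \cadlag process $Y$ that is a version of $X$ and a $D_{T}((\widetilde{\Phi_{\theta}})'_{\beta})$-valued random variable, and hence a $D_{T}(\Phi'_{\beta})$-valued random variable with Radon law.
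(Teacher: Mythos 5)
Your overall reduction is the same as the paper's: hypothesis (1) of Theorem \ref{theoRegulaTheoSkorokSpace} is immediate, and the entire content is to extract equicontinuity of $\{X_{t}: t\in[0,T]\}$ from the Radon marginals and the ultrabornological hypothesis. Your first two steps are sound and match what the paper imports from \cite{FonsecaMora:2018}: the Radon property of each $\mu_{t}$ together with barrelledness gives continuity of each individual map $X_{t}\colon\Phi\to L^{0}\ProbSpace$ (the paper cites Theorem 2.9 of \cite{FonsecaMora:2018} for exactly this), and the c\`{a}dl\`{a}g version of $X[\phi]$ gives boundedness of $\{X_{t}[\phi]: t\in[0,T]\}$ in $L^{0}\ProbSpace$ for each fixed $\phi$.

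The gap is in the final promotion to equicontinuity. You correctly note that the barrelled form of Banach--Steinhaus fails because $L^{0}\ProbSpace$ is not locally convex, but your proposed repair --- Baire category on each Banach step of an inductive limit representation of $\Phi$ --- hits the same obstruction one level up. Uniform boundedness on a Banach step $E_{i}$ does give equicontinuity of the restricted family, so for a balanced neighborhood $V$ of $0$ in $L^{0}\ProbSpace$ the set $W=\bigcap_{t}X_{t}^{-1}(V)$ meets every step in a neighborhood of zero; but $W$ is only balanced, not convex, and the topology of $\Phi$ is the finest \emph{locally convex} topology making the inclusions continuous, so $W$ need not be a neighborhood of zero in $\Phi$. (Nor does continuity on each step yield sequential lower semicontinuity on $\Phi$, since a convergent sequence in a general ultrabornological space need not lie in a single step.) The paper sidesteps this by citing Proposition 3.10 of \cite{FonsecaMora:2018}; within the present paper the correct mechanism is the pseudo-seminorm machinery of Proposition \ref{propUltraborAndPseudoSeminorm}, and your two ingredients are exactly what it needs. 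Set $q(\phi)=\sup_{t\in[0,T]}\Exp\bigl[\abs{X_{t}[\phi]}/(1+\abs{X_{t}[\phi]})\bigr]$: continuity of each $X_{t}$ makes $q$ a supremum of continuous functions, hence lower semicontinuous; the $L^{0}$-boundedness of $\{X_{t}[\phi]\}_{t}$ supplies the remaining pseudo-seminorm axioms (as in Lemma \ref{lemmPseudoSeminormMeasures}); and Proposition \ref{propUltraborAndPseudoSeminorm} then gives continuity of $q$ at zero, which is the desired equicontinuity. With that substitution the rest of your argument goes through.
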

\begin{prf}
First, from the fact that each  $\mu_{t}$ is a Radon measure and because the space $\Phi$ is barreled (this is because it is  ultrabornological, see \cite{NariciBeckenstein} p.449) it follows from Theorem 2.9 in \cite{FonsecaMora:2018} that each of the maps $X_{t}$ from $\Phi$ into $L^{0} \ProbSpace$ is continuous. Moreover because $\Phi$ is ultrabornological, the above property together with \emph{(1)} implies that the linear mapping from $\Psi$ into $D_{T}(\R)$ (equipped with supremum norm) given by $\psi \mapsto \{ X_{t}[\psi] \}_{t \in [0,T]}$ is continuous (see \cite{FonsecaMora:2018}, Proposition 3.10). This in particular shows that the family $\{ X_{t}: t \in [0,T] \}$ of linear maps from $\Phi$ into $L^{0} \ProbSpace$ is equicontinuous. The result then follows from Theorem \ref{theoRegulaTheoSkorokSpace}     
\end{prf}

As an important consequence of Theorem \ref{theoRegulaTheoSkorokSpace} we get the following interesting result concerning sufficient conditions for Radon extensions of cylindrical measures on $D_{T}(\Phi'_{\beta})$. 

 \begin{theo}[Minlos theorem on Skorokhod Space] \label{theoMinlosSkorokhodSpace} Let $\Phi$ be a nuclear space and let $\mu$ be a cylindrical probability measure on $D_{T}(\Phi'_{\beta})$. Suppose that the family of its Fourier transforms $(\widehat{\mu}_{t}: t \in [0,T])$ is equicontinuous at zero. Then, there exists a Radon probability measure $\nu$ on $D_{T}(\Phi'_{\beta})$ that is an extension of $\mu$. Moreover, there  exists a weaker countably Hilbertian topology $\theta$ on $\Phi$ such that $\nu$ is a Radon measure on $D_{T}((\widetilde{\Phi_{\theta}})'_{\beta})$. 
\end{theo}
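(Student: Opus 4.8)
The plan is to realize the cylindrical measure $\mu$ as the law of a genuine $D_{T}(\Phi'_{\beta})$-valued random variable, by combining the canonical cylindrical random variable furnished by Theorem \ref{theoExistCanoCylRandVariable} with the regularization theorem on Skorokhod space (Theorem \ref{theoRegulaTheoSkorokSpace}). First I would apply Theorem \ref{theoExistCanoCylRandVariable} to obtain a probability space $\ProbSpace$ and a cylindrical random variable $X: \Phi \rightarrow L^{0}(\Omega,\mathscr{F},\Prob;D_{T}(\R))$ whose cylindrical distribution is $\mu$. From $X$ I extract a cylindrical process $\{X_{t}\}_{t \in [0,T]}$ in $\Phi'$ by setting $X_{t}(\phi) \defeq X(\phi)(t)$ for each $\phi \in \Phi$ and $t \in [0,T]$; linearity of each $X_{t}$ in $\phi$ follows from the linearity of $X$ together with the fact that evaluation at a fixed time is linear on $D_{T}(\R)$.

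Next I would verify the two hypotheses of Theorem \ref{theoRegulaTheoSkorokSpace} for this cylindrical process. Condition \emph{(1)} is immediate: for each $\phi$ the real-valued process $X(\phi)=\{X_{t}(\phi)\}_{t \in [0,T]}$ is precisely the family of time-evaluations of the $D_{T}(\R)$-valued random variable $X(\phi)$, whose sample paths are \cadlag by construction, so it is its own \cadlag version. The crucial point is condition \emph{(2)}, the equicontinuity of the family $\{X_{t}: t \in [0,T]\}$ of linear maps from $\Phi$ into $L^{0}\ProbSpace$. Here I would observe that, because $\mu$ is the cylindrical distribution of $X$, the Fourier transform of the cylindrical random variable $X_{t}$ coincides with $\widehat{\mu}_{t}$; indeed, for every $\phi \in \Phi$ the law of $X_{t}(\phi)=X(\phi)(t)$ under $\Prob$ is the pushforward of $\mu$ by $x \mapsto x(t)[\phi]$, so that $\Exp(e^{iX_{t}(\phi)})=\int_{D_{T}(\Phi'_{\beta})} e^{ix(t)[\phi]}\, d\mu = \widehat{\mu}_{t}(\phi)$.

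The main obstacle is then to pass from the assumed equicontinuity at zero of the family of characteristic functions $(\widehat{\mu}_{t}: t \in [0,T])$ to equicontinuity of the family of cylindrical random variables $\{X_{t}: t \in [0,T]\}$ into $L^{0}\ProbSpace$. This is the classical equivalence from the theory of cylindrical measures (see \cite{FonsecaMora:2018}): a family of cylindrical random variables is equicontinuous at the origin (equivalently, equicontinuous, since the maps are linear) if and only if the associated family of Fourier transforms is equicontinuous at the origin. The quantitative link is the standard estimate bounding $\Prob(\abs{X_{t}(\phi)}>\epsilon)$ by an integral of $1-\mathrm{Re}\,\widehat{\mu}_{t}(u\phi)$ over $u \in [-1,1]$, where one exploits the linearity identity $X_{t}(u\phi)=u X_{t}(\phi)$ so that a balanced neighborhood of zero in $\Phi$ controls the whole integral uniformly in $t$. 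Granting this, hypothesis \emph{(2)} holds.

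With both hypotheses verified, Theorem \ref{theoRegulaTheoSkorokSpace} yields a weaker countably Hilbertian topology $\theta$ on $\Phi$ and a $D_{T}((\widetilde{\Phi_{\theta}})'_{\beta})$-valued random variable $Y$ such that, for each $\phi \in \Phi$, the processes $X(\phi)$ and $Y[\phi]$ are indistinguishable, with $Y$ in particular a $D_{T}(\Phi'_{\beta})$-valued random variable whose law is Radon on both $D_{T}((\widetilde{\Phi_{\theta}})'_{\beta})$ and $D_{T}(\Phi'_{\beta})$. I would then take $\nu$ to be the probability distribution of $Y$ and check that it extends $\mu$. Given a cylinder set $\mathcal{Z}(M,I,B)$ with $M=\{\phi_{1},\dots,\phi_{m}\}$ and $I=\{t_{1},\dots,t_{m}\}$, indistinguishability of $Y[\phi_{i}]$ and $X(\phi_{i})$ holds off a $\Prob$-null set for each $i$; intersecting the finitely many full-measure events gives, almost surely, $(Y(t_{1})[\phi_{1}],\dots,Y(t_{m})[\phi_{m}]) = (X(\phi_{1})(t_{1}),\dots,X(\phi_{m})(t_{m}))$, whence $\nu(\mathcal{Z}(M,I,B)) = \Prob\big((X(\phi_{1})(t_{1}),\dots,X(\phi_{m})(t_{m})) \in B\big) = \mu(\mathcal{Z}(M,I,B))$. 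Thus $\nu$ is a Radon probability measure on $D_{T}(\Phi'_{\beta})$ extending $\mu$, and it is Radon on $D_{T}((\widetilde{\Phi_{\theta}})'_{\beta})$, which completes the argument.
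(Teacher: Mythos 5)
Your proposal is correct and follows essentially the same route as the paper: construct the canonical cylindrical random variable via Theorem \ref{theoExistCanoCylRandVariable}, deduce equicontinuity of the maps $\phi \mapsto X(\phi)(t)$ from the equicontinuity of $(\widehat{\mu}_{t})$ via the standard truncation inequality (the paper cites Kallenberg's Lemma 5.1 for the precise estimate $\Prob(\abs{X(\phi)(t)} \geq \epsilon) \leq \frac{\epsilon}{2}\int_{-2/\epsilon}^{2/\epsilon}(1-\widehat{\mu}_{t}(s\phi))\,ds$, which is the same device you invoke), apply the regularization theorem \ref{theoRegulaTheoSkorokSpace}, and verify that the law of the resulting random variable agrees with $\mu$ on cylinder sets. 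No gaps; your argument matches the paper's proof.
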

\begin{prf}
First, from Theorem \ref{theoExistCanoCylRandVariable} there exists a cylindrical random variable $X$ in $D_{T}(\Phi'_{\beta})$ defined on some probability space $\ProbSpace$ whose cylindrical distribution is $\mu$. In particular, $\phi \mapsto \{ X(\phi)(t) \}_{t \in [0,T]}$ is a cylindrical processes in $\Phi'$ satisfying condition (1) in Theorem \ref{theoRegulaTheoSkorokSpace}.  

To check the second condition in Theorem \ref{theoRegulaTheoSkorokSpace}, observe that from the inequality (see \cite{Kallenberg}, Lemma 5.1, p.85)
$$ \Prob (\abs{X(\phi)(t)} \geq \epsilon) 
\leq \frac{\epsilon}{2} \int^{2/\epsilon}_{-2/\epsilon} (1-\Exp e^{i s X(\phi)(t)}) ds 
= \frac{\epsilon}{2} \int^{2/\epsilon}_{-2/\epsilon} (1-\widehat{\mu}_{t}(s\phi)) ds $$
valid for every  $\epsilon >0$, $t \in [0,T]$, and $\phi \in \Phi$, it follows that the equicontinuity of $(\widehat{\mu}_{t}: t \in [0,T])$ at zero implies that of the family of linear maps $\phi \mapsto X(\phi)(t)$, $ t \in [0,T]$, from $\Phi$ into $L^{0} \ProbSpace$. Hence by Theorem \ref{theoRegulaTheoSkorokSpace}
there exist a weaker countable Hilbertian topology $\theta$ on $\Phi$ and a $D_{T}((\widetilde{\Phi_{\theta}})'_{\beta})$-valued random variable $Y$ such that $Y[\phi]=X(\phi)$ $\Prob$-a.e. 

Let $\nu$ be the probability distribution of $Y$ on $D_{T}(\Phi'_{\beta})$. We know from Theorem \ref{theoRegulaTheoSkorokSpace} that $\nu$ is a Radon measure on $D_{T}((\widetilde{\Phi_{\theta}})'_{\beta})$, and hence a Radon measure on $D_{T}(\Phi'_{\beta})$. Moreover, for every $m \in \N$, $t_{1}, \dots, t_{m} \in [0,T]$, $\phi_{1}, \dots, \phi_{m} \in \Phi$ and $A \in \mathcal{B}(\R^{m})$, we have
\begin{eqnarray*}
\nu \left( \left( \Pi^{\phi_{1}, \dots, \phi_{m}}_{t_{1}, \dots, t_{m}} \right)^{-1} (A) \right) 
& = & \Prob \left( Y \in \left( \Pi^{\phi_{1}, \dots, \phi_{m}}_{t_{1}, \dots, t_{m}} \right)^{-1} (A) \right)\\
& = & \Prob \left( (Y(t_{1})[\phi_{1}], \dots,Y(t_{m})[\phi_{m}]) \in   A \right)\\
& = & \Prob \left( (X(\phi_{1})(t_{1}), \dots, X(\phi_{m})(t_{m}) \in   A \right)\\
& = & \mu \left( \left( \Pi^{\phi_{1}, \dots, \phi_{m}}_{t_{1}, \dots, t_{m}} \right)^{-1} (A) \right).
\end{eqnarray*}
Thus $\nu$ is an extension of $\mu$ as both measures agree on the cylindrical $\sigma$-algebra $\mathcal{C}(D_{T}(\Phi'_{\beta}))$ of $ D_{T}(\Phi'_{\beta})$.  
\end{prf}

As the next results shows, to every probability measure on $D_{T}(\Phi'_{\beta})$ with equicontinuous Fourier transforms there corresponds a canonical random variable in $D_{T}(\Phi'_{\beta})$.
Its validity is a direct consequence of the proof of Theorem \ref{theoMinlosSkorokhodSpace}. 

\begin{coro}\label{coroCanonicalRV}
Let $\Phi$ be a nuclear space. Suppose that $\mu$ is a probability measure on $D_{T}(\Phi'_{\beta})$ for which the family of its Fourier transforms $(\widehat{\mu}_{t}: t \in [0,T])$ is equicontinuous at zero. Then there  exists a weaker countably Hilbertian topology $\theta$ on $\Phi$ and a $D_{T}((\widetilde{\Phi_{\theta}})'_{\beta})$-valued random variable $Y$ whose probability distribution is $\mu$. 
\end{coro}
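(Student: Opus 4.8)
The plan is to transcribe the construction from the proof of Theorem~\ref{theoMinlosSkorokhodSpace}, applied to $\mu$ regarded as a cylindrical measure, and then to upgrade the resulting agreement on cylinder sets into an identity of Radon measures. First I would regard the given probability measure $\mu$ as a cylindrical probability measure on $D_{T}(\Phi'_{\beta})$ by restricting it to the cylindrical $\sigma$-algebra $\mathcal{C}(D_{T}(\Phi'_{\beta}))$; its time-projection Fourier transforms are exactly the family $(\widehat{\mu}_{t}: t \in [0,T])$ assumed equicontinuous at zero. By Theorem~\ref{theoExistCanoCylRandVariable} there is a cylindrical random variable $X$ in $D_{T}(\Phi'_{\beta})$, defined on some $\ProbSpace$, whose cylindrical distribution is $\mu$. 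The associated cylindrical process $\phi \mapsto \{ X(\phi)(t) \}_{t \in [0,T]}$ satisfies hypothesis (1) of Theorem~\ref{theoRegulaTheoSkorokSpace} automatically, since each $X(\phi)$ is already $D_{T}(\R)$-valued, and it satisfies hypothesis (2) by the very argument used in the proof of Theorem~\ref{theoMinlosSkorokhodSpace}: the inequality from \cite{Kallenberg},
\[
\Prob \left( \abs{X(\phi)(t)} \geq \epsilon \right) \leq \frac{\epsilon}{2} \int_{-2/\epsilon}^{2/\epsilon} \left( 1 - \widehat{\mu}_{t}(s \phi) \right) \, ds ,
\]
transfers equicontinuity at zero of $(\widehat{\mu}_{t}: t \in [0,T])$ to equicontinuity of the family $\{ \phi \mapsto X(\phi)(t) : t \in [0,T] \}$ of linear maps into $L^{0}\ProbSpace$. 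Theorem~\ref{theoRegulaTheoSkorokSpace} then produces a weaker countably Hilbertian topology $\theta$ and a $D_{T}((\widetilde{\Phi_{\theta}})'_{\beta})$-valued random variable $Y$ with $Y[\phi] = X(\phi)$ $\Prob$-a.e.\ for each $\phi \in \Phi$.

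Writing $\nu$ for the law of $Y$, Theorem~\ref{theoRegulaTheoSkorokSpace} guarantees that $\nu$ is a Radon measure on $D_{T}(\Phi'_{\beta})$ (and in fact on the Souslin space $D_{T}((\widetilde{\Phi_{\theta}})'_{\beta})$ by Proposition~\ref{propProperSkoroSpaceWeakerCHT}). Repeating the finite-dimensional computation at the end of the proof of Theorem~\ref{theoMinlosSkorokhodSpace}, for every cylinder set $\mathcal{Z}(M,I,B)$ with $M = \{\phi_{1}, \dots, \phi_{m}\}$ and $I = \{t_{1}, \dots, t_{m}\}$ one obtains
\[
\nu \left( \mathcal{Z}(M,I,B) \right) = \Prob \left( \left( X(\phi_{1})(t_{1}), \dots, X(\phi_{m})(t_{m}) \right) \in B \right) = \mu \left( \mathcal{Z}(M,I,B) \right) ,
\]
so $\nu$ and $\mu$ coincide on the cylindrical $\sigma$-algebra $\mathcal{C}(D_{T}(\Phi'_{\beta}))$.

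It remains to promote this to $\nu = \mu$ on all of $\mathcal{B}(D_{T}(\Phi'_{\beta}))$, which is the content of ``the law of $Y$ is $\mu$''; I expect this to be the main obstacle, since the inclusion $\mathcal{C}(D_{T}(\Phi'_{\beta})) \subseteq \mathcal{B}(D_{T}(\Phi'_{\beta}))$ is in general strict and agreement on cylinders alone does not force equality. Here I would use that both $\mu$ and $\nu$ are Radon, hence tight, and that the Skorokhod topology is completely regular, so that a Radon probability measure is determined by the integrals it assigns to $C_{b}(D_{T}(\Phi'_{\beta}))$. Given $g \in C_{b}(D_{T}(\Phi'_{\beta}))$ and $\epsilon > 0$, tightness furnishes a compact $K$ with $\mu(K^{c}), \nu(K^{c}) < \epsilon$; on $K$ the evaluation maps $x \mapsto x(t)[\phi]$ are continuous and separate points, so the bounded cylinder functions form a point-separating subalgebra of $C(K)$ containing the constants, and Stone--Weierstrass yields a bounded cylinder function $h$ with $\sup_{K} \abs{g - h} < \epsilon$ and $\norm{h}_{\infty} \leq \norm{g}_{\infty} + 1$. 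Since $\mu$ and $\nu$ share all finite-dimensional distributions they assign $h$ the same integral, and estimating $\abs{\int g \, d\mu - \int g \, d\nu}$ by splitting over $K$ and $K^{c}$ gives a bound of order $\epsilon$; letting $\epsilon \to 0$ forces $\int g \, d\mu = \int g \, d\nu$ for every $g \in C_{b}(D_{T}(\Phi'_{\beta}))$, whence $\mu = \nu$. This identifies the law of $Y$ with $\mu$ and completes the proof; everything prior to this measure-theoretic identification is an immediate transcription of the proof of Theorem~\ref{theoMinlosSkorokhodSpace}.
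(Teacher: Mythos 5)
Your first two paragraphs are exactly the paper's proof: the paper dispatches this corollary with the remark that it is a direct consequence of the proof of Theorem \ref{theoMinlosSkorokhodSpace}, and what you transcribe --- the canonical cylindrical random variable from Theorem \ref{theoExistCanoCylRandVariable}, the transfer of equicontinuity via the Kallenberg inequality, regularization via Theorem \ref{theoRegulaTheoSkorokSpace}, and the agreement of the law $\nu$ of $Y$ with $\mu$ on $\mathcal{C}(D_{T}(\Phi'_{\beta}))$ --- is precisely that argument. You are also right to flag that a residual identification of $\nu$ with $\mu$ on the full Borel $\sigma$-algebra remains; the paper passes over this in silence.

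Your patch for that identification, however, does not work as written. First, you assert that $\mu$ is Radon, but this is not a hypothesis of the corollary; only $\nu$ is known to be Radon from the construction. Second, and more seriously, the Stone--Weierstrass step fails: for $t \in (0,T)$ the evaluation $x \mapsto x(t)[\phi]$ is \emph{not} continuous on $D_{T}(\Phi'_{\beta})$ for the Skorokhod topology (it is continuous at $x$ only where the path $x$ is continuous at $t$), and it stays discontinuous after restriction to a compact set: take a fixed $f \in \Phi'$ with $f[\phi]=1$, let $x_{n}$ equal $f$ on $[t_{0}+\tfrac{1}{n},T]$ and $0$ elsewhere, and let $x$ equal $f$ on $[t_{0},T]$ and $0$ elsewhere; then $K=\{x\} \cup \{x_{n} : n \in \N\}$ is compact, $x_{n} \rightarrow x$ in the Skorokhod topology, yet $x_{n}(t_{0})[\phi]=0 \not\rightarrow x(t_{0})[\phi]=1$. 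Hence the bounded cylinder functions do not form a subalgebra of $C(K)$ and cannot uniformly approximate $g$ there. The repair consistent with the paper's machinery is instead to move the whole comparison into $D_{T}((\widetilde{\Phi_{\theta}})'_{\beta})$, where Lemma \ref{lemmCylinAndBorelSigmaAlgCoincide} gives $\mathcal{C}=\mathcal{B}$: once one knows that $\mu$ is carried by, and restricts to a Borel measure on, that subspace (this is what Proposition \ref{propFamilyMeasRadonInCHS} asserts for a single measure, again with only a sketched proof), agreement on cylinder sets upgrades to equality of Borel measures there, and then to equality on $\mathcal{B}(D_{T}(\Phi'_{\beta}))$ by continuity of the inclusion. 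Alternatively one reads ``probability distribution is $\mu$'' as equality of all finite-dimensional distributions, which is all the paper ever uses of this corollary and is consistent with its ``unique up to equivalence'' phrasing elsewhere.
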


\section{Tightness of Probability Measures on the Skorokhod Space}\label{sectionTightSkorSpa}

\begin{assu}
Unless otherwise indicated, in this section we will always assume that $\Phi$ is a nuclear space. 
\end{assu}

\subsection{Uniform tightness on $D_{T}(\Phi'_{\beta})$} 

The main result of this section is  the following theorem that provides necessary and sufficient conditions for a family of probability measures $(\mu_{\alpha}: \alpha \in A)$ on $D_{T}(\Phi'_{\beta})$ to be uniformly tight. 

\begin{theo} \label{theoThighnessMeasures}
Let $(\mu_{\alpha}: \alpha \in A)$ be a family of probability measures on $D_{T}(\Phi'_{\beta})$ such that it satisfies the following:
\begin{enumerate}
\item \label{condEquiFouTRans} The family of Fourier transforms $(\widehat{\mu}_{\alpha,t}: t \in [0,T], \alpha \in A)$ is equicontinuous at zero.
\item \label{condWeakTight} For each $\phi \in \Phi$, the family $(\mu_{\alpha} \circ \Pi_{\phi}^{-1}: \alpha \in A)$ of probability measures on $D_{T}(\R)$ is uniformly tight.    
\end{enumerate}  
Then there exists a weaker countably Hilbertian topology $\theta$ on $\Phi$ such that $(\mu_{\alpha}: \alpha \in A)$ is uniformly tight on $D_{T}((\widetilde{\Phi_{\theta}})'_{\beta})$. In particular, the family $(\mu_{\alpha}: \alpha \in A)$ is uniformly tight on $D_{T}(\Phi'_{\beta})$. 

Conversely, if $\Phi$ is a barrelled nuclear space and the family $(\mu_{\alpha}: \alpha \in A)$ is uniformly tight on $D_{T}(\Phi'_{\beta})$, then conditions \ref{condEquiFouTRans} and \ref{condWeakTight} are satisfied. 
\end{theo}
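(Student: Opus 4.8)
The plan is to use condition \ref{condEquiFouTRans} to transport the whole problem onto the Skorokhod space $D_{T}((\widetilde{\Phi_{\theta}})'_{\beta})$ for a single weaker countably Hilbertian topology $\theta$, and then to build, for each $\epsilon>0$, a compact set of $\mu_{\alpha}$-measure at least $1-\epsilon$ inside one of the Polish pieces $D_{T}(\Phi'_{p_{m}})$ of the decomposition \eqref{decompSkorSpaceWeakCountHilb}. First I would fix an increasing sequence $(p_{n})_{n\in\N}$ of continuous Hilbertian seminorms with each inclusion $i_{p_{n},p_{n+1}}$ Hilbert--Schmidt, and use the uniform equicontinuity at zero of $(\widehat{\mu}_{\alpha,t}:t\in[0,T],\alpha\in A)$ to arrange that $\abs{\widehat{\mu}_{\alpha,t}(\phi)-1}\leq 1/n$ whenever $p_{n}(\phi)\leq 1$, for all $\alpha$ and $t$ simultaneously; let $\theta$ be the topology they generate. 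Because this equicontinuity holds uniformly in $\alpha$, the Minlos-type argument of Theorem \ref{theoMinlosSkorokhodSpace} (equivalently Corollary \ref{coroCanonicalRV}) applies with this common $\theta$, so each $\mu_{\alpha}$ is carried by $D_{T}((\widetilde{\Phi_{\theta}})'_{\beta})$ as a Radon measure. Since the inclusions $D_{T}(\Phi'_{p_{m}})\hookrightarrow D_{T}((\widetilde{\Phi_{\theta}})'_{\beta})\hookrightarrow D_{T}(\Phi'_{\beta})$ are continuous (the text preceding \eqref{decompSkorSpaceWeakCountHilb} and Proposition \ref{propProperSkoroSpaceWeakerCHT}(3)), a set compact in some $D_{T}(\Phi'_{p_{m}})$ is compact in both larger spaces; hence it suffices to produce such compact sets, and uniform tightness on $D_{T}((\widetilde{\Phi_{\theta}})'_{\beta})$, and in turn on $D_{T}(\Phi'_{\beta})$, will follow.

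Next I would obtain \emph{compact containment} from condition \ref{condEquiFouTRans}. Writing $\mu_{\alpha,t}=\mu_{\alpha}\circ\Pi_{t}^{-1}$ for the time-$t$ marginal on $\Phi'_{\beta}$, the uniform bound $\abs{\widehat{\mu}_{\alpha,t}(\phi)-1}\leq 1/n$ on $B_{p_{n}}(1)$ together with a Gaussian randomization over an orthonormal basis of $\Phi_{p_{n+1}}$ (the standard Minlos estimate, using $\norm{i_{p_{n},p_{n+1}}}_{\mathcal{L}_{2}}<\infty$) yields, for every $R>0$,
\[
\sup_{\alpha\in A}\ \sup_{t\in[0,T]}\ \mu_{\alpha,t}\bigl(\{f:p_{n+1}'(f)>R\}\bigr)\ \leq\ C\Bigl(\tfrac1n+\tfrac{1}{R^{2}}\Bigr).
\]
Choosing $n$ and $R$ appropriately makes this smaller than any prescribed level, uniformly in $\alpha$ and $t$. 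Taking one further Hilbert--Schmidt step $i_{p_{n+1},q}$ with $q=p_{m}$, the ball $\{p_{n+1}'\leq R\}$ has relatively compact closure $\mathcal{K}$ in $\Phi'_{q}$, so the first condition of Proposition \ref{propProperSkorHilbSpaces} can be met with $\mathcal{K}$ while controlling the excluded mass uniformly at each fixed time.

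It then remains to control the \emph{modulus of continuity}, which is where condition \ref{condWeakTight} enters. For each $\phi$ the family $(\mu_{\alpha}\circ\Pi_{\phi}^{-1}:\alpha\in A)$ is uniformly tight in $D_{T}(\R)$, so it controls both $\sup_{t}\abs{x(t)[\phi]}$ and the real modulus $w'_{x[\phi]}(\delta)$ uniformly in $\alpha$. Applying this to an orthonormal basis $(\phi^{q}_{j})_{j}$ of $\Phi_{q}$ and summing as in the proof of Theorem \ref{theoCharacCompacSets}, where on the containment event the tail is dominated by $4R^{2}p_{n+1}(\phi^{q}_{j})^{2}$ with $\sum_{j}p_{n+1}(\phi^{q}_{j})^{2}=\norm{i_{p_{n+1},q}}_{\mathcal{L}_{2}}^{2}<\infty$, lifts the scalar modulus control to $w'_{x}(\delta,q)$. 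I would then assemble a single set
\[
K_{\epsilon}=\{x:x(t)\in\mathcal{K}\ \forall t\}\cap\bigcap_{k}\{x:w'_{x}(\delta_{k},q)\leq\eta_{k}\},
\]
which is compact in $D_{T}(\Phi'_{q})$ by Proposition \ref{propProperSkorHilbSpaces} and satisfies $\inf_{\alpha}\mu_{\alpha}(K_{\epsilon})\geq 1-\epsilon$. \textbf{The main obstacle} I anticipate is precisely this last assembly: the Minlos estimate only controls the marginals at each fixed time, and promoting per-time containment to the event $\{x(t)\in\mathcal{K}\text{ for all }t\in[0,T]\}$ of uniformly large measure requires interpolating across time using the modulus estimate --- the standard but delicate $D_{T}(\text{Polish})$ tightness argument (cf. the proof of Theorem 2.4.3 in \cite{KallianpurXiong}); the bookkeeping between the several Hilbert--Schmidt seminorms must be arranged so that containment and modulus control hold on a common event.

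For the converse, assume $\Phi$ is a barrelled nuclear space and $(\mu_{\alpha})$ is uniformly tight on $D_{T}(\Phi'_{\beta})$. Condition \ref{condWeakTight} is immediate, since each space projection $\Pi_{\phi}:D_{T}(\Phi'_{\beta})\to D_{T}(\R)$ is continuous and continuous images of compact sets are compact. For condition \ref{condEquiFouTRans}, fix $\epsilon>0$ and a compact $K_{\epsilon}\subseteq D_{T}(\Phi'_{\beta})$ with $\inf_{\alpha}\mu_{\alpha}(K_{\epsilon})\geq 1-\epsilon$. Because $\Phi$ is barrelled nuclear, Theorem \ref{theoCharacCompacSets} gives a continuous Hilbertian seminorm $q$ with $K_{\epsilon}$ compact in $D_{T}(\Phi'_{q})$, whence by Proposition \ref{propProperSkorHilbSpaces} all path values $\{x(t):t\in[0,T],x\in K_{\epsilon}\}$ lie in a single compact, hence equicontinuous (as $\Phi$ is barrelled, see \cite{Schaefer}, Theorem IV.5.2), subset $\mathcal{K}$ of $\Phi'_{\beta}$; thus $\sup_{f\in\mathcal{K}}\abs{f[\phi]}\leq q(\phi)$ after enlarging $q$. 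Using $\abs{e^{ia}-1}\leq\min(\abs{a},2)$ and splitting over $K_{\epsilon}$ and its complement,
\[
\abs{\widehat{\mu}_{\alpha,t}(\phi)-1}\leq\int_{K_{\epsilon}}\abs{x(t)[\phi]}\,d\mu_{\alpha}+2\mu_{\alpha}(K_{\epsilon}^{c})\leq q(\phi)+2\epsilon,
\]
uniformly in $\alpha$ and $t$, which is the desired equicontinuity at zero.
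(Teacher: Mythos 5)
Your reduction to $D_{T}((\widetilde{\Phi_{\theta}})'_{\beta})$ via a common weaker countably Hilbertian topology is exactly the paper's first step, and your converse is sound (indeed slightly more self-contained than the paper's, which routes through Proposition 1.6.vi) of \cite{Jakubowski:1986} and a cited theorem that uniform tightness on the dual of a barrelled nuclear space implies equicontinuity of Fourier transforms, rather than estimating $\abs{\widehat{\mu}_{\alpha,t}(\phi)-1}$ by hand). But the forward direction has a genuine gap, and it is precisely the one you flag yourself: the Minlos estimate applied to the time marginals $\mu_{\alpha,t}$ only bounds $\mu_{\alpha,t}(\{f: p_{n+1}'(f)>R\})$ at each fixed $t$, and no union bound over the uncountable set $[0,T]$ converts this into a bound on $\mu_{\alpha}(\{x:\sup_{t\in[0,T]}p_{n+1}'(x(t))>R\})$. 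Moreover, your proposed repair is circular: to lift the scalar moduli $w'_{x}(\delta,\phi^{q}_{j})$ to $w'_{x}(\delta,q)$ you dominate the tail of the sum by $4R^{2}p_{n+1}(\phi^{q}_{j})^{2}$, which is valid only on the pathwise containment event $\{\sup_{t}p_{n+1}'(x(t))\leq R\}$ --- the very event whose measure you have not yet controlled uniformly in $\alpha$ --- while the time-interpolation you invoke to obtain that event requires the modulus bound.

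The missing idea is that condition (2) must be used for the compact containment itself, not only for the modulus. The paper proves (Lemma \ref{lemmPseudoSeminormMeasures} and Proposition \ref{propSuffiCondTightOnUltrabor}) that on the ultrabornological space $\widetilde{\Phi_{\theta}}$ the uniform tightness of the one-dimensional projections forces the pseudo-seminorm $V(\phi)=\sup_{\alpha}\int \sup_{t}\abs{x(t)[\phi]}\,(1+\sup_{t}\abs{x(t)[\phi]})^{-1}\,d\mu_{\alpha}$ to be continuous, and hence yields a continuous Hilbertian seminorm $p$ with $\sup_{\alpha}\int\sup_{t\in[0,T]}\abs{1-e^{ix(t)[\phi]}}\,d\mu_{\alpha}\leq\epsilon$ for $\phi\in B_{p}(1)$; the crucial point is that the supremum over $t$ sits \emph{inside} the integral. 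Feeding this pathwise-uniform estimate into the Gaussian randomization gives $\mu_{\alpha}\left(\sup_{t}q'(x(t))>C\right)$ small uniformly in $\alpha$ in a single step, after which compactness of the assembled set is obtained from Lemma 3.3 of \cite{Jakubowski:1986} (a closed subset of $D_{T}(K)$ with compact scalar projections is compact) rather than from a modulus computation. Without some substitute for this $\sup_{t}$-inside-the-integral estimate, your argument does not close.
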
 

We proceed to prove Theorem \ref{theoThighnessMeasures}. The first step of the proof is the following result. 

\begin{prop} \label{propFamilyMeasRadonInCHS}
Let $(\mu_{\alpha}: \alpha \in A)$ be a family of  probability measures on $D_{T}(\Phi'_{\beta})$ satisfying condition \ref{condEquiFouTRans} in Theorem \ref{theoThighnessMeasures}. Then there exists a weaker countably Hilbertian topology $\theta$ on $\Phi$ such that for each $\alpha \in A$, $\mu_{\alpha}$ is a Radon probability measure on $D_{T}((\widetilde{\Phi_{\theta}})'_{\beta})$.
\end{prop}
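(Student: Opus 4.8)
The plan is to adapt the proof of the Minlos theorem on Skorokhod space (Theorem \ref{theoMinlosSkorokhodSpace}) to the whole family at once; the decisive point is that the \emph{joint} equicontinuity of the Fourier transforms over both $t$ and $\alpha$ furnished by condition \ref{condEquiFouTRans} should produce a \emph{single} weaker countably Hilbertian topology $\theta$ serving every $\mu_{\alpha}$ simultaneously, rather than one $\theta_{\alpha}$ per measure.

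First I would, for each $\alpha \in A$, restrict $\mu_{\alpha}$ to the cylindrical $\sigma$-algebra $\mathcal{C}(D_{T}(\Phi'_{\beta}))$ and invoke Theorem \ref{theoExistCanoCylRandVariable} to obtain a canonical cylindrical random variable $X^{\alpha}$ in $D_{T}(\Phi'_{\beta})$, defined on a probability space $(\Omega_{\alpha}, \mathscr{F}_{\alpha}, \Prob_{\alpha})$, whose cylindrical distribution is $\mu_{\alpha}$. Exactly as in the proof of Theorem \ref{theoMinlosSkorokhodSpace}, the Kallenberg inequality
\begin{equation*}
\Prob_{\alpha}(\abs{X^{\alpha}(\phi)(t)} \geq \epsilon) \leq \frac{\epsilon}{2} \int^{2/\epsilon}_{-2/\epsilon} (1-\widehat{\mu}_{\alpha,t}(s\phi))\, ds
\end{equation*}
then shows that condition \ref{condEquiFouTRans} implies equicontinuity at zero of the family of linear maps $\{ \phi \mapsto X^{\alpha}(\phi)(t) : t \in [0,T], \, \alpha \in A \}$ from $\Phi$ into the respective spaces $L^{0}(\Omega_{\alpha},\mathscr{F}_{\alpha},\Prob_{\alpha})$; crucially this equicontinuity is \emph{uniform in} $\alpha$, since the right-hand side above is controlled uniformly in $(\alpha,t)$ as $\phi \to 0$.

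The key step, and the one I expect to be the main obstacle, is to extract from this uniform equicontinuity a single topology $\theta$. Here I would return to the construction of the weaker countably Hilbertian topology inside the proof of the regularization theorem (Theorem \ref{theoRegularizationTheoremCadlagContinuousVersion}): that construction builds an increasing sequence of continuous Hilbertian seminorms on $\Phi$ out of the equicontinuity modulus of the family $\{X_{t} : t \in [0,T]\}$, interlacing Hilbert--Schmidt seminorms by nuclearity. Because the equicontinuity data here is uniform over $\alpha$ as well, the same sequence of seminorms can be chosen independently of $\alpha$; letting $\theta$ be the topology it generates, the regularization theorem (Theorem \ref{theoRegulaTheoSkorokSpace}) applies to each $X^{\alpha}$ with this \emph{common} $\theta$ and yields, for every $\alpha$, a $D_{T}((\widetilde{\Phi_{\theta}})'_{\beta})$-valued random variable $Y^{\alpha}$ with $Y^{\alpha}[\phi]=X^{\alpha}(\phi)$ $\Prob_{\alpha}$-a.e. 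Verifying that the construction genuinely depends only on the (uniform) modulus, and not on $\alpha$-specific data, is the heart of the argument.

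Finally I would identify $\mu_{\alpha}$ with the law $\nu_{\alpha}$ of $Y^{\alpha}$. Since $D_{T}((\widetilde{\Phi_{\theta}})'_{\beta})$ is Souslin (Proposition \ref{propProperSkoroSpaceWeakerCHT}(1)) and its Borel and cylindrical $\sigma$-algebras coincide (Lemma \ref{lemmCylinAndBorelSigmaAlgCoincide}), $\nu_{\alpha}$ is a Radon probability measure there, and by continuity of the inclusion $D_{T}((\widetilde{\Phi_{\theta}})'_{\beta}) \hookrightarrow D_{T}(\Phi'_{\beta})$ (Proposition \ref{propProperSkoroSpaceWeakerCHT}(3)) its image is a Radon measure on $D_{T}(\Phi'_{\beta})$ agreeing with $\mu_{\alpha}$ on $\mathcal{C}(D_{T}(\Phi'_{\beta}))$. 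To conclude $\mu_{\alpha}=\nu_{\alpha}$ I would use Radon inner regularity together with Theorem \ref{theoCharacCompacSets}: every compact subset of $D_{T}(\Phi'_{\beta})$ lies inside some Polish space $D_{T}(\Phi'_{q})$, on which the trace of the cylindrical $\sigma$-algebra is the full Borel $\sigma$-algebra, so two Radon measures agreeing on cylinders agree on every compact and hence everywhere. This shows each $\mu_{\alpha}$ is carried by, and Radon on, $D_{T}((\widetilde{\Phi_{\theta}})'_{\beta})$, with $\theta$ independent of $\alpha$.
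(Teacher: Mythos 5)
Your proposal is correct and follows essentially the same route as the paper: the canonical cylindrical random variables from Theorem \ref{theoExistCanoCylRandVariable}, the Kallenberg inequality turning condition \ref{condEquiFouTRans} into equicontinuity of the maps $\phi \mapsto X^{\alpha}(\phi)(t)$ uniformly over $(\alpha,t)$, and the observation that the weaker countably Hilbertian topology produced by the regularization theorem depends only on this uniform modulus, hence can be chosen independently of $\alpha$. The only divergence is your final paragraph identifying $\mu_{\alpha}$ with the law of $Y^{\alpha}$ on all of $\mathcal{B}(D_{T}(\Phi'_{\beta}))$: the inner-regularity argument there presupposes that $\mu_{\alpha}$ is already Radon and invokes the implication (2) $\Rightarrow$ (3) of Theorem \ref{theoCharacCompacSets}, which requires $\Phi$ barrelled --- neither is assumed here; the paper sidesteps this by establishing only that the cylindrical restriction of $\mu_{\alpha}$ admits a Radon extension on $D_{T}((\widetilde{\Phi_{\theta}})'_{\beta})$ (agreement on $\mathcal{C}=\mathcal{B}$ there, by Lemma \ref{lemmCylinAndBorelSigmaAlgCoincide}) and thereafter works with that extension.
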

\begin{prf}
For each $\alpha \in A$, denote by $X^{\alpha}$ the canonical cylindrical random variable in $D_{T}(\Phi'_{\beta})$ associated to $\mu_{\alpha}$ by  Theorem \ref{theoExistCanoCylRandVariable}. Without loss of generality we can assume that the $X^{\alpha}$'s are defined on the same probability space $\ProbSpace$. 

The same arguments in the proof of Theorem \ref{theoMinlosSkorokhodSpace} and the equicontinuity of the family $(\widehat{\mu}_{\alpha,t}: t \in [0,T], \alpha \in A)$ at zero shows that we can choose a weaker countably Hilbertian topology $\theta$ on $\Phi$ (independently of $\alpha$) and for each $\alpha \in A$  a $D_{T}((\widetilde{\Phi_{\theta}})'_{\beta})$-valued random variable $Y^{\alpha}$ such that $Y^{\alpha}(t)[\phi]=X^{\alpha}(\phi)(t)$ $\Prob$-a.e. 
Hence, following the same arguments to those used in the last part of the proof of Theorem \ref{theoMinlosSkorokhodSpace} it can be shown that $\mu_{\alpha}$ is a  Radon probability measure on $D_{T}((\widetilde{\Phi_{\theta}})'_{\beta})$ for each $\alpha \in A$
\end{prf}

The importance of Proposition \ref{propFamilyMeasRadonInCHS} is that it settle our problem  in the context of measures on $D_{T}((\widetilde{\Phi_{\theta}})'_{\beta})$. This fact is to be used in combination with the following result whose proof will be given at the end of this section. 

\begin{prop}\label{propSuffiCondTightOnUltrabor} Let $(\mu_{\alpha}: \alpha \in A)$ be a family of probability measures on $D_{T}(\Psi'_{\beta})$ where $\Psi$ is an ultrabornological space, and suppose that $\forall \phi \in \Psi$ the family $(\mu_{\alpha} \circ \Pi_{\phi}^{-1}: \alpha \in A)$ is uniformly tight on $D_{T}(\R)$. Then $\forall \epsilon >0$ there exists a continuous seminorm $p$ on $\Psi$ such that 
$$ \sup_{\alpha \in A} \int_{D_{T}(\Psi'_{\beta})} \, \sup_{t \in [0,T]} \abs{1-e^{ix(t)[\phi]}} d\mu_{\alpha} \leq \epsilon, \quad \forall \, \phi \in B_{p}(1). $$
In particular, the family of Fourier transforms $(\widehat{\mu}_{\alpha,t}: \alpha \in A, t \in [0,T])$ is equicontinuous at zero.
\end{prop}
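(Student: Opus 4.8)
The plan is to encode the assertion as the continuity at the origin of a single nonlinear ``modulus''. Set
$$ g(\phi) \defeq \sup_{\alpha \in A} \int_{D_{T}(\Psi'_{\beta})} \sup_{t \in [0,T]} \abs{1-e^{ix(t)[\phi]}} \, d\mu_{\alpha}, \qquad \phi \in \Psi, $$
so that the desired estimate is exactly the statement that for each $\epsilon>0$ the sublevel set $\{g \leq \epsilon\}$ contains the unit ball of some continuous seminorm. The closing ``in particular'' is then immediate, since $\abs{1-\widehat{\mu}_{\alpha,t}(\phi)} = \abs{\int (1-e^{ix(t)[\phi]})\,d\mu_{\alpha}} \leq g(\phi)$ for all $\alpha$ and $t$. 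First I would record the soft structural properties of $g$: from $\abs{1-e^{i(a+b)}} \leq \abs{1-e^{ia}}+\abs{1-e^{ib}}$ one gets subadditivity, $g(\phi+\psi)\leq g(\phi)+g(\psi)$; conjugation gives $g(-\phi)=g(\phi)$ and $g(0)=0$; and since for fixed $x$ the integrand $\phi \mapsto \sup_{t}\abs{1-e^{ix(t)[\phi]}}$ is continuous, Fatou's lemma and taking suprema show that $g$ is lower semicontinuous on $\Psi$.

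The probabilistic input enters only to establish \emph{radial} continuity at $0$. Fix $\phi$. Because compact subsets of $D_{T}(\R)$ are bounded in the uniform norm, the uniform tightness of $(\mu_{\alpha}\circ\Pi_{\phi}^{-1}:\alpha\in A)$ gives, for each $\eta>0$, a finite $R_{\phi}$ with $\sup_{\alpha}\mu_{\alpha}(\{x: \sup_{t}\abs{x(t)[\phi]}>R_{\phi}\})\leq\eta$. Combining this with $\abs{1-e^{i\theta}}\leq \abs{\theta}\wedge 2$ yields $g(s\phi)\leq \abs{s}R_{\phi}+2\eta$ for every scalar $s$, whence $g(s\phi)\to 0$ as $s\to 0$. (Equivalently, writing $N_{x}(\phi)=\sup_{t}\abs{x(t)[\phi]}$, which is a genuine seminorm in $\phi$, each path set $\{x(t):t\in[0,T]\}$ is strongly bounded, hence equicontinuous because $\Psi$ is barrelled, which is precisely what makes these radial estimates legitimate.)

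The heart of the proof is to promote radial continuity at $0$ to the statement that $\{g\leq\epsilon\}$ is a \emph{neighbourhood} of $0$ in $\Psi$; once this is known, local convexity of $\Psi$ furnishes an absolutely convex neighbourhood $U\subseteq\{g\leq\epsilon\}$ whose Minkowski gauge, after rescaling, is the required (continuous) seminorm $p$. To obtain the neighbourhood claim I would use that $\Psi$ is ultrabornological, i.e. the locally convex inductive limit of the Banach spaces $\Psi_{B}$ generated by its Banach discs $B$. On each $\Psi_{B}$ the restriction of $g$ is lower semicontinuous, subadditive, symmetric and radially continuous at $0$; hence $\Psi_{B}=\bigcup_{n} n\{g\leq\epsilon\}$ with each $n\{g\leq\epsilon\}$ closed, and since $\Psi_{B}$ is Banach the Baire category theorem produces a point of $\{g\leq\epsilon\}$ with nonempty $\Psi_{B}$-interior. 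Subadditivity and symmetry then translate this interior point back to the origin, so that $\{g\leq 2\epsilon\}\cap\Psi_{B}$ contains a norm-ball of $\Psi_{B}$; in other words $\{g\leq 2\epsilon\}$ absorbs every Banach disc.

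The main obstacle is the final passage from ``absorbs every Banach disc'' to ``is a neighbourhood of $0$ in $\Psi$''. For absolutely convex sets these two properties coincide in an ultrabornological space, but $g$ is only subadditive and not positively homogeneous, so its sublevel sets are balanced yet in general not convex, and the clean inductive-limit criterion does not apply verbatim; naively replacing $\{g\leq\epsilon\}$ by its absolutely convex hull would produce a neighbourhood that need not lie inside $\{g\leq\epsilon\}$. I would circumvent this by passing to the radially monotone majorant $\bar{g}(\phi)=\sup_{0\leq s\leq 1} g(s\phi)$, which remains subadditive, symmetric, lower semicontinuous and radially continuous at $0$, whose sublevel sets $\{\bar{g}\leq\epsilon\}$ are balanced, satisfy $\{\bar{g}\leq\epsilon\}\subseteq\{g\leq\epsilon\}$, and (by the Baire step above, applied to $\bar{g}$) absorb every Banach disc. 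Extracting from this balanced absorbing set an honestly absolutely convex absorbing set still contained in $\{g\leq\epsilon\}$ is the delicate point on which the whole proposition turns; once it is carried out, barrelledness of $\Psi$ makes the gauge of the resulting barrel continuous, delivering $p$. Everything else, namely the structural properties of $g$, the radial estimate from tightness, and the deduction of equicontinuity of the Fourier transforms, is routine.
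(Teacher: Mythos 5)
There is a genuine gap, and you have located it yourself: the passage from ``$\{g\le 2\epsilon\}$ meets each Banach disc's span in a neighbourhood of zero'' to ``$\{g\le 2\epsilon\}$ is a neighbourhood of zero in $\Psi$''. The ultrabornological topology is the \emph{locally convex} inductive limit of the spaces $\Psi_{B}$, so its neighbourhood filter at the origin is generated by absolutely convex sets; a merely balanced set whose trace on every $\Psi_{B}$ is a $\Psi_{B}$-neighbourhood of zero need not be a neighbourhood of zero in $\Psi$ (it is one only for the finer, non-locally-convex topological inductive limit). Your majorant $\bar{g}$ repairs balancedness but not convexity, and the step you defer --- extracting an absolutely convex absorbing set inside $\{\bar{g}\le\epsilon\}$ --- is exactly the content of the proposition, not a routine finish. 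A secondary problem is that your $g$ is not even monotone under scaling: $\abs{1-e^{is\theta}}\le\abs{1-e^{i\theta}}$ for $\abs{s}\le 1$ fails (take $\theta=2\pi$, $s=\tfrac12$), so $\{g\le\epsilon\}$ is not balanced to begin with, and the Baire step as written already needs $\bar g$ rather than $g$.

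The paper resolves this by a different mechanism that avoids convexity altogether. It works with the auxiliary functional $V(\phi)=\sup_{\alpha}\int \sup_{t}\abs{x(t)[\phi]}\,(1+\sup_{t}\abs{x(t)[\phi]})^{-1}\,d\mu_{\alpha}$, which genuinely satisfies the axioms of a \emph{pseudo-seminorm} (subadditivity, monotonicity under scalars of modulus at most one, radial continuity at zero, and stability of null sequences under scaling --- the last of these uses the tightness hypothesis again and is not automatic for your $g$ or $\bar g$). It then invokes the facts that on a barrelled space every lower semicontinuous pseudo-seminorm is sequentially continuous (S-barrelledness) and on a bornological space every sequentially continuous pseudo-seminorm is continuous (sequentiality), so on an ultrabornological space the sequentially lower semicontinuous pseudo-seminorm $V$ is continuous; local convexity of $\Psi$ then supplies the seminorm $p$, and the estimate for $g$ on $B_{p}(1)$ follows by splitting the integral over $\{\sup_{t}\abs{x(t)[\phi]}\le\delta\}$ and its complement. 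Your structural observations about $g$, the radial estimate from tightness, and the deduction of equicontinuity of the Fourier transforms are all fine, but without the pseudo-seminorm machinery (or some substitute for the convexification step) the argument does not close.
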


\begin{proof}[Proof of Theorem \ref{theoThighnessMeasures}] 
Let $(\mu_{\alpha}: \alpha \in A)$ be a family of probability measures on $D_{T}(\Phi'_{\beta})$ satisfying conditions  \ref{condEquiFouTRans} and \ref{condWeakTight} in  Theorem \ref{theoThighnessMeasures}. From Proposition \ref{propFamilyMeasRadonInCHS} 
there exists a weaker countably Hilbertian topology $\theta$ on $\Phi$ such that $\mu_{\alpha}$ is a Radon probability measure on $D_{T}((\widetilde{\Phi_{\theta}})'_{\beta})$ for each $\alpha \in A$.

Let $\epsilon >0$. Because $\widetilde{\Phi_{\theta}}$ is an ultrabornological space,  it follows from Proposition \ref{propSuffiCondTightOnUltrabor} that 
there exists a continuous Hilbertian seminorm $p$ on $\widetilde{\Phi_{\theta}}$ (therefore continuous on $\Phi$) such that 
$$ \sup_{\alpha \in A} \int_{D_{T}((\widetilde{\Phi_{\theta}})'_{\beta})} \, \sup_{t \in [0,T]} \abs{1-e^{ix(t)[\phi]}} d\mu_{\alpha} \leq \frac{\epsilon}{12}, \quad \forall \, \phi \in B_{p}(1). $$
Now, because $\abs{e^{ix(t)[\phi]}} \leq 1$ $\forall \, \phi \in \Phi$ and $x \in D_{T}((\widetilde{\Phi_{\theta}})'_{\beta})$, it then follows  that 
\begin{equation} \label{inequaSupCharaFunct}
\sup_{\alpha \in A} \int_{D_{T}((\widetilde{\Phi_{\theta}})'_{\beta})} \, \sup_{t \in [0,T]} \abs{1-e^{ix(t)[\phi]}} d\mu_{\alpha} \leq \frac{\epsilon}{12}+2p(\phi)^{2}, \quad \forall \, \phi \in \Phi. 
\end{equation}
Let $q$ be a continuous Hilbertian seminorm on $\Phi$ such that $p \leq q$ and $i_{p,q}$ is Hilbert-Schmidt. Let $(\phi^{q}_{k})_{k \in \N} \subseteq \Phi$ be a complete orthonormal system in $\Phi_{q}$.  By following similar arguments to those used in the proof of Lemma 3.8 in \cite{FonsecaMora:2018} (see also  Lemma 3.2 in \cite{Mitoma:1983}) and from \eqref{inequaSupCharaFunct} it follows that for every $C>0$ and every $\alpha \in A$:

$$  \mu_{\alpha} \left( x \in D_{T}((\widetilde{\Phi_{\theta}})'_{\beta}): \sup_{t \in [0,T]}  \sum_{k=1}^{\infty} \abs{x(t)[\phi_{k}^{q}]}^{2} > C^{2} \right) $$

$$ \leq  \lim_{m \rightarrow \infty} \frac{\sqrt{e}}{\sqrt{e}-1}  
  \int_{ D_{T}((\widetilde{\Phi_{\theta}})'_{\beta})} \sup_{t \in [0,T]}  \left( 1- \exp  \frac{-1}{2C^{2}} \sum_{k=1}^{m} \abs{x(t)[\phi_{k}^{q}]}^{2}   \right) d \mu_{\alpha} $$

$$ \leq  \lim_{m \rightarrow \infty} \frac{\sqrt{e}}{\sqrt{e}-1}  
    \int_{R^{m}}\int_{ D_{T}((\widetilde{\Phi_{\theta}})'_{\beta})} \sup_{t \in [0,T]}  \abs{ 1- \exp  i \sum_{k=1}^{m} \frac{z_{k} x(t)[\phi_{k}^{q}]}{2C^{2}}   } d \mu_{\alpha} \, \frac{e^{\frac{-\abs{z}^{2}}{2}}}{(2\pi)^{\frac{m}{2}}} dz $$

$$ \leq   \lim_{m \rightarrow \infty} \frac{\sqrt{e}}{\sqrt{e}-1} \left(\frac{\epsilon}{12} + \frac{2}{C^{2}} \sum_{k=1}^{m} p(\phi_{k}^{q})^{2}  \right) $$

$$ =  \frac{\sqrt{e}}{\sqrt{e}-1} \left(\frac{\epsilon}{12} + \frac{2}{C^{2}} \norm{ i_{p,q} }^{2}_{\mathcal{L}_{2}(\Phi_{q}, \Phi_{p}) }  \right). $$

Then choosing $C$ such that $\displaystyle{\frac{2}{C^{2}} \norm{ i_{p,q} }^{2}_{\mathcal{L}_{2}(\Phi_{q}, \Phi_{p}) } < \frac{\epsilon}{12}}$ and considering the probability of the complement, we get that 
\begin{multline} 
 \inf_{\alpha \in A} \mu_{\alpha} \left( x \in D_{T}(\Phi'_{q}): \sup_{t \in [0,T]} q'(x(t)) \leq C \right) \\
 \geq  \inf_{\alpha \in A} \mu_{\alpha} \left( x \in D_{T}((\widetilde{\Phi_{\theta}})'_{\beta}): \sup_{t \in [0,T]} \sum_{k=1}^{\infty} \abs{x(t)[\phi_{k}^{q}]}^{2} \leq C^{2} \right) 
\geq 1-\frac{\epsilon}{2}. \label{pointwiseBoundedOnHilbert}
\end{multline}
Let $\varrho$ be a continuous Hilbertian seminorm on $\Phi$ such that $q \leq \varrho$ and $i_{q,\varrho}$ is compact (e.g. Hilbert-Schmidt). Then $i'_{q,\varrho}$ is also  a compact operator. As $F=\{ f \in \Phi'_{q}: q'(f) \leq C \}$ is a neighborhood of zero in $\Phi'_{q}$, then its image under $i'_{q,\varrho}$ is a relatively compact subset of $\Phi'_{\varrho}$. Let $K$ be its closure. Then $K$ is a compact subset of $\Phi'_{\varrho}$, and regarding both $F$ and $K$ as subsets of $\Phi'_{\varrho}$ we have $F \subseteq K$. Then it follows from \eqref{pointwiseBoundedOnHilbert} that 
\begin{multline} 
\inf_{\alpha \in A} \mu_{\alpha} \left( x \in D_{T}(\Phi'_{\varrho}): x(t) \in K \, \mbox{ for all } t \in [0,T] \, \right) \\
\geq  \inf_{\alpha \in A} \mu_{\alpha} \left( x \in D_{T}(\Phi'_{q}): \sup_{t \in [0,T]} q'(x(t)) \leq C \right) \geq 1-\frac{\epsilon}{2}. \label{pointwiseTightnessEqua}
\end{multline}

Now, since $\Phi_{\varrho}$ is a separable Hilbert space, we can choose a sequence $(\varphi^{\varrho}_{j})_{j \in \N} \subseteq \Phi$ that separates points in $\Phi'_{\varrho}$ (see \cite{BogachevMT}, Proposition 6.5.4, p.17). For each $j \in \N$, from our assumption of tightness of $(\mu_{\alpha} \circ \Pi_{\varphi_{j}^{\varrho}}^{-1}: \alpha \in A)$, there exists a compact subset $B_{j}$ of $D_{T}(\R)$ such that 
\begin{equation} \label{measureSpaceProjectionsTightness}
\inf_{\alpha \in A} \mu_{\alpha} \circ \Pi_{\varphi_{j}^{\varrho}}^{-1} (B_{j})> 1-\frac{\epsilon}{2^{j+1}}.
\end{equation}  
Let 
$$\Gamma=\left( \bigcap_{j =1}^{\infty} \Pi_{\varphi_{j}^{\varrho}}^{-1} (B_{n}) \right) \cap \left\{ x \in D_{T}(\Phi'_{\varrho}): x(t) \in K \, \mbox{ for all } t \in [0,T] \, \right\}.$$
The set $\Gamma$ satisfies: 
\begin{enumerate}
\item $\Gamma \subseteq D_{T}(K)$ is closed; where $K \subseteq \Phi'_{\varrho}$ is equipped with the subspace topology,
\item for each $j \in \N$, $\Pi_{\varphi_{j}^{\varrho}}(\Gamma)$ is compact in $D_{T}(\R)$. 
\end{enumerate}
Then, Lemma 3.3 in \cite{Jakubowski:1986} shows that 
$\Gamma$ is compact in  $D_{T}(\Phi'_{\varrho})$. Moreover, from \eqref{pointwiseTightnessEqua} and \eqref{measureSpaceProjectionsTightness}
$$\sup_{\alpha \in A} \mu_{\alpha} (\Gamma^{c}) \leq \frac{\epsilon}{2}+ \sum_{j=1}^{\infty} \frac{\epsilon}{2^{j+1}}=\epsilon. $$
 
Now consider a decreasing sequence of positive real numbers $(\epsilon_{n}: n \in \N)$ converging to zero, and for each $n \in \N$ choose a continuous Hilbertian seminorm $\varrho_{n}$ on $\Phi$ and a compact subset $\Gamma_{n}$ in $D_{T}(\Phi'_{\varrho_{n}})$ such that 
$$ \inf_{\alpha \in A} \mu_{\alpha} (\Gamma_{n}) \geq 1-\epsilon_{n},$$
then it is clear from \eqref{decompSkorSpaceWeakCountHilb} that $(\mu_{\alpha}: \alpha \in A)$ is uniformly tight on $D_{T}((\widetilde{\Phi_{\vartheta}})'_{\beta})$, where $\vartheta$ is the weaker countably Hilbertian topology on $\Phi$ generated by the seminorms $(\varrho_{n})$. Here it is important to stress the fact that by construction the topology $\vartheta$ is finner than the topology $\theta$ defined at the beginning of the proof and hence each $\mu_{\alpha}$ is a Radon measure on $D_{T}((\widetilde{\Phi_{\vartheta}})'_{\beta})$. Now, because the inclusion from $D_{T}((\widetilde{\Phi_{\vartheta}})'_{\beta})$ into $D_{T}(\Phi'_{\beta})$ is continuous (Proposition \ref{propProperSkoroSpaceWeakerCHT}(3)),  then we have that $(\mu_{\alpha}: \alpha \in A)$  is uniformly tight on $D_{T}(\Phi'_{\beta})$.

To prove the converse, assume that $\Phi$ is a barrelled nuclear space and that the family $(\mu_{\alpha}: \alpha \in A)$ is uniformly tight on $D_{T}(\Phi'_{\beta})$. Then it follows from Proposition 1.6.vi) in \cite{Jakubowski:1986} that the family of probability measures $(\mu_{\alpha}\circ \Pi^{-1}_{t}: \alpha \in A, t \in [0,T])$ is uniformly tight on $\Phi'_{\beta}$. But because $\Phi$ is barrelled and nuclear, it follows that the family of its Fourier transforms $(\widehat{\mu}_{\alpha,t}: t \in [0,T], \alpha \in A)$ is equicontinuous at zero (see \cite{DaleckyFomin}, Theorem III.2.7, p.104). Finally for each $\phi \in \Phi$ the continuity of the space projection map $\Pi_{\phi}$ and that  $(\mu_{\alpha}: \alpha \in A)$ is uniformly tight on $D_{T}(\Phi'_{\beta})$ implies that  $(\mu_{\alpha} \circ \Pi_{\phi}^{-1}: \alpha \in A)$ is uniformly tight on $D_{T}(\R)$.    
\end{proof}

Now, observe that if $\Phi$ is an ultrabornological nuclear space then it follows from  
Proposition \ref{propSuffiCondTightOnUltrabor} that the condition \ref{condWeakTight} in Theorem \ref{theoThighnessMeasures} implies condition \ref{condEquiFouTRans} in that theorem. From this observation we obtain the following important result.

\begin{theo} \label{theoTightMeasuresUltrab}
Let $(\mu_{\alpha}: \alpha \in A)$ be a family of probability measures on $D_{T}(\Phi'_{\beta})$ where $\Phi$ is an ultrabornological nuclear space. Then, the family $(\mu_{\alpha}: \alpha \in A)$ is uniformly tight on $D_{T}(\Phi'_{\beta})$ if and only if $\forall \phi \in \Phi$ the family $(\mu_{\alpha} \circ \Pi_{\phi}^{-1}: \alpha \in A)$ is uniformly tight on $D_{T}(\R)$. Moreover, there exists a weaker countably Hilbertian topology $\theta$ on $\Phi$ such that $(\mu_{\alpha}: \alpha \in A)$ is uniformly tight on $D_{T}((\widetilde{\Phi_{\theta}})'_{\beta})$.  
\end{theo}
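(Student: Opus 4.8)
The plan is to obtain this result as a direct consequence of Theorem \ref{theoThighnessMeasures} together with Proposition \ref{propSuffiCondTightOnUltrabor}, exploiting the fact that an ultrabornological space is barrelled. The heart of the matter is the observation recorded just before the statement: when $\Phi$ is ultrabornological nuclear, condition \ref{condWeakTight} of Theorem \ref{theoThighnessMeasures} already forces condition \ref{condEquiFouTRans}, so the two-condition characterization there collapses to the single condition on finite dimensional projections.

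For the sufficiency direction, I would start by assuming that for every $\phi \in \Phi$ the family $(\mu_{\alpha} \circ \Pi_{\phi}^{-1}: \alpha \in A)$ is uniformly tight on $D_{T}(\R)$; this is precisely condition \ref{condWeakTight}. Since $\Phi$ is itself ultrabornological, I would apply Proposition \ref{propSuffiCondTightOnUltrabor} with $\Psi = \Phi$ to conclude that the family of Fourier transforms $(\widehat{\mu}_{\alpha,t}: t \in [0,T], \alpha \in A)$ is equicontinuous at zero, which is condition \ref{condEquiFouTRans}. With both conditions now in hand, the first part of Theorem \ref{theoThighnessMeasures} produces a weaker countably Hilbertian topology $\theta$ on $\Phi$ for which $(\mu_{\alpha}: \alpha \in A)$ is uniformly tight on $D_{T}((\widetilde{\Phi_{\theta}})'_{\beta})$, and hence, by the continuity of the inclusion into $D_{T}(\Phi'_{\beta})$, uniformly tight on $D_{T}(\Phi'_{\beta})$. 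This single step simultaneously delivers the ``moreover'' clause.

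For the necessity direction, I would assume that $(\mu_{\alpha}: \alpha \in A)$ is uniformly tight on $D_{T}(\Phi'_{\beta})$. Since every ultrabornological space is barrelled (as recorded in the paper, \cite{NariciBeckenstein}, p.449), $\Phi$ is in particular a barrelled nuclear space, so the converse half of Theorem \ref{theoThighnessMeasures} applies and yields that both conditions \ref{condEquiFouTRans} and \ref{condWeakTight} hold; retaining condition \ref{condWeakTight} alone gives exactly the asserted uniform tightness of the projections $(\mu_{\alpha} \circ \Pi_{\phi}^{-1}: \alpha \in A)$ on $D_{T}(\R)$ for each $\phi \in \Phi$.

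I do not anticipate a genuine obstacle here, since all the analytic labour — the passage to the completion $\widetilde{\Phi_{\theta}}$, the Hilbert--Schmidt truncation estimate leading to \eqref{pointwiseBoundedOnHilbert}, and the deduction of equicontinuity of the Fourier transforms from tightness of the one-dimensional projections — has already been carried out in Theorem \ref{theoThighnessMeasures} and Proposition \ref{propSuffiCondTightOnUltrabor}. The only point requiring a moment's care is the implication ``ultrabornological $\Rightarrow$ barrelled'', which is what allows me to invoke the converse half of Theorem \ref{theoThighnessMeasures}, and which is a standard fact. In effect the theorem is a streamlined corollary in which the ultrabornological hypothesis both supplies the barrelledness needed for necessity and, via Proposition \ref{propSuffiCondTightOnUltrabor}, removes the need to assume equicontinuity of Fourier transforms for sufficiency.
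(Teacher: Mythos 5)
Your proposal is correct and follows exactly the route the paper takes: the theorem is presented there as an immediate consequence of the observation that, by Proposition \ref{propSuffiCondTightOnUltrabor} applied with $\Psi=\Phi$, condition \ref{condWeakTight} of Theorem \ref{theoThighnessMeasures} implies condition \ref{condEquiFouTRans} when $\Phi$ is ultrabornological, while the necessity direction uses that ultrabornological spaces are barrelled so the converse half of Theorem \ref{theoThighnessMeasures} applies. Nothing is missing.
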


We finalize this section with the proof of Proposition \ref{propSuffiCondTightOnUltrabor}. To do this, we will need the following preliminary results on pseudo-seminorms on vector spaces. 

\begin{defi}
A function $x \mapsto \abs{x}$ defined on a vector space $L$ over $\R$ is called a \emph{pseudo-seminorm} on $L$ if 
\begin{enumerate}
\item $\abs{x+y} \leq \abs{x}+ \abs{y}$, $\forall \, x, y \in L$. 
\item For $\lambda \in \R$, $\abs{\lambda} \leq 1$ implies $\abs{ \lambda x} \leq \abs{x}$, $\forall \, x \in L$.
\item If $\lambda_{n} \rightarrow 0$, then $\abs{\lambda_{n} x} \rightarrow 0$, $\forall \, x \in L$. 
\item $\abs{x_{n}} \rightarrow 0$ implies $\abs{\lambda x_{n}} \rightarrow 0$, $\forall \lambda \in \R$. 
\end{enumerate}
\end{defi}

Every pseudo-seminorm defines a pseudo-metric $d(x,y)=\abs{x-y}$ on $L$ that generates a metrizable linear topology on $L$ (see \cite{Schaefer}, Section I.6).

The following facts are relevant to our study of pseudo-seminorms on ultrabornological spaces:

\begin{enumerate}
\item A topological vector space $E$ is \emph{sequential} if every sequentially closed subset is closed. Hence every sequentially lower semicontinuous function on $E$ is lower semicontinuous. 
\item $E$ is sequential if and only if every sequentially continuous pseudo-seminorm on $E$ is continuous if and only if every sequentially continuous linear map to an arbitrary Fr\'{e}chet space is continuous (see \cite{KatsarasBenekas:1995}, Proposition 2.6). Hence every bornological space is sequential (\cite{KotheI}, Theorem 28.3.(4), p.383)
\item A topological vector space $E$ is \emph{S-barrelled} if and only if every lower semicontinuous pseudo seminorm on $E$ is sequentially continuous (see \cite{KatsarasBenekas:1995}, Proposition 5.2) if and only if every pointwise bounded family of continuous linear maps from $E$ into an arbitrary Fr\'{e}chet space is sequentially equicontinuous (see \cite{KatsarasBenekas:1995}, Proposition 5.5). 
\end{enumerate}

From facts (1)-(3) above and because every ultrabornological space is bornological and barrelled, we have the following:

\begin{prop}\label{propUltraborAndPseudoSeminorm}
If $E$ is an ultrabornological space, then every sequentially lower semicontinuous pseudo-seminorm on $E$ is continuous. 
\end{prop}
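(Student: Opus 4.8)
The plan is to chain together the three facts listed immediately above, exploiting the two structural consequences of ultrabornologicity that are already recorded in the preliminaries: that $E$ is bornological (hence sequential) and that $E$ is barrelled (hence, as I will argue, S-barrelled). Fix a sequentially lower semicontinuous pseudo-seminorm $\abs{\cdot}$ on $E$; the goal is to upgrade the qualifier on $\abs{\cdot}$ from ``sequentially lower semicontinuous'' to ``continuous'' in three successive moves, each move supplied by one of the cited facts.

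First I would record the two sequential-type properties of $E$. Since $E$ is ultrabornological it is in particular bornological, so by fact (2) it is sequential. Since $E$ is ultrabornological it is also barrelled, and here I would invoke the classical Banach--Steinhaus theorem: every pointwise bounded family of continuous linear maps from a barrelled space into a locally convex space is equicontinuous. A Fr\'{e}chet space is locally convex, so any such family into a Fr\'{e}chet space is equicontinuous, hence a fortiori sequentially equicontinuous. By the third characterization in fact (3) this shows that $E$ is S-barrelled.

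With sequentiality and S-barrelledness in hand, the upgrade proceeds cleanly. Because $E$ is sequential, fact (1) converts the sequential lower semicontinuity of $\abs{\cdot}$ into genuine lower semicontinuity. Because $E$ is S-barrelled, the first characterization in fact (3) then shows that the lower semicontinuous pseudo-seminorm $\abs{\cdot}$ is sequentially continuous. Finally, because $E$ is sequential, the first equivalence in fact (2) promotes sequential continuity of a pseudo-seminorm to continuity. Stringing these three implications together yields that $\abs{\cdot}$ is continuous, which is the assertion.

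I expect the only nontrivial step to be verifying that $E$ is S-barrelled: the facts as stated characterize S-barrelledness through its own equivalent formulations rather than deriving it from barrelledness, so the substantive (though routine) point is to notice that barrelledness plus Banach--Steinhaus delivers the pointwise-bounded/equicontinuous criterion of fact (3). Once that is settled, the remainder is purely a bookkeeping chain among the notions of lower semicontinuity, sequential continuity, and continuity, with no further analytic input required.
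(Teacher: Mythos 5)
Your proposal is correct and is exactly the argument the paper intends: the paper derives the proposition directly from facts (1)--(3) together with the observation that ultrabornological implies bornological (hence sequential) and barrelled, leaving the details implicit. Your explicit verification that barrelledness yields S-barrelledness via the Banach--Steinhaus theorem and the third characterization in fact (3) is the right way to fill in the one step the paper does not spell out.
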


The main step in the proof of Proposition \ref{propSuffiCondTightOnUltrabor} is given in the following result. 

\begin{lemm}\label{lemmPseudoSeminormMeasures}
Let $(\mu_{\alpha}: \alpha \in A)$ be a family of probability measures on $D_{T}(\Psi_{\beta})$ where $\Psi$ is an ultrabornological space, and suppose that $\forall \phi \in \Psi$ the family $(\mu_{\alpha} \circ \Pi_{\phi}^{-1}: \alpha \in A)$ is uniformly tight on $D_{T}(\R)$. Let $V: \Psi \rightarrow [0,+\infty)$ be given by 
$$V(\phi)=\sup_{\alpha \in A} \int_{D_{T}(\Psi'_{\beta})} \frac{\sup_{t \in [0,T]} \abs{x(t)[\phi]}}{1+\sup_{t \in [0,T]} \abs{x(t)[\phi]}} d \mu_{\alpha}, \quad \forall \, \phi \in \Psi. $$
Then, $V$ is a continuous pseudo-seminorm on $\Psi$. 
\end{lemm}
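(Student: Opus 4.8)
The plan is to verify the four pseudo-seminorm axioms directly and then obtain continuity for free from Proposition \ref{propUltraborAndPseudoSeminorm}, exploiting that $\Psi$ is ultrabornological. Throughout I would write $g(r)=\frac{r}{1+r}$ for $r\geq 0$ and, for fixed $x\in D_{T}(\Psi'_{\beta})$, set $N(x,\phi)=\sup_{t\in[0,T]}\abs{x(t)[\phi]}$, so that $V(\phi)=\sup_{\alpha\in A}\int_{D_{T}(\Psi'_{\beta})}g(N(x,\phi))\,d\mu_{\alpha}$. The key elementary facts are that $g$ is increasing, bounded by $1$ and subadditive ($g(a+b)\leq g(a)+g(b)$), while for each fixed $x$ the map $\phi\mapsto N(x,\phi)$ is a seminorm on $\Psi$ (a supremum of the seminorms $\phi\mapsto\abs{x(t)[\phi]}$), with $N(x,\lambda\phi)=\abs{\lambda}N(x,\phi)$. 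Note $N(x,\phi)$ is the supremum norm of the c\`{a}dl\`{a}g path $\Pi_{\phi}(x)\in D_{T}(\R)$; since $\Pi_{\phi}$ is continuous and the supremum norm is Borel measurable on $D_{T}(\R)$, the integrand is a bounded measurable function and $V$ is well defined.

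Axioms (1), (2) and (4) are then pure monotonicity/algebra. For (1) I would combine $N(x,\phi+\psi)\leq N(x,\phi)+N(x,\psi)$ with monotonicity and subadditivity of $g$ to get $g(N(x,\phi+\psi))\leq g(N(x,\phi))+g(N(x,\psi))$ pointwise in $x$; integrating and using $\sup_{\alpha}(f_{\alpha}+h_{\alpha})\leq\sup_{\alpha}f_{\alpha}+\sup_{\alpha}h_{\alpha}$ gives $V(\phi+\psi)\leq V(\phi)+V(\psi)$. For (2), $\abs{\lambda}\leq 1$ gives $N(x,\lambda\phi)\leq N(x,\phi)$, so by monotonicity of $g$ one gets $V(\lambda\phi)\leq V(\phi)$. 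For (4) I would use the elementary inequality $g(\abs{\lambda}r)\leq(1+\abs{\lambda})g(r)$ (checking $\abs{\lambda}\leq 1$ and $\abs{\lambda}\geq 1$ separately), which yields $V(\lambda\phi_{n})\leq(1+\abs{\lambda})V(\phi_{n})$, so $V(\phi_{n})\to 0$ forces $V(\lambda\phi_{n})\to 0$.

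The only axiom that genuinely uses the hypothesis is (3), and this is where I expect the main work. Fix $\phi$ and $\lambda_{n}\to 0$; pushing forward to $D_{T}(\R)$ and writing $\nu_{\alpha}=\mu_{\alpha}\circ\Pi_{\phi}^{-1}$, the relation $N(x,\lambda_{n}\phi)=\abs{\lambda_{n}}N(x,\phi)$ gives $V(\lambda_{n}\phi)=\sup_{\alpha}\int_{D_{T}(\R)}g\bigl(\abs{\lambda_{n}}\sup_{s}\abs{y(s)}\bigr)\,d\nu_{\alpha}(y)$. Given $\epsilon>0$, uniform tightness of $(\mu_{\alpha}\circ\Pi_{\phi}^{-1}:\alpha\in A)$ supplies a compact $K\subseteq D_{T}(\R)$ with $\nu_{\alpha}(K^{c})<\epsilon$ for all $\alpha$, and since compact subsets of $D_{T}(\R)$ are bounded in supremum norm, $M\defeq\sup_{y\in K}\sup_{s}\abs{y(s)}<\infty$. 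Splitting the integral over $K$ and $K^{c}$ and using monotonicity of $g$ together with $g\leq 1$ bounds $V(\lambda_{n}\phi)$ by $g(\abs{\lambda_{n}}M)+\epsilon$, uniformly in $\alpha$; letting $n\to\infty$ and then $\epsilon\to 0$ gives $V(\lambda_{n}\phi)\to 0$. The decisive point is that tightness makes the tail bound on $K^{c}$ uniform in $\alpha$ while the supremum norm is uniformly bounded on the ``bulk'' $K$, so shrinking $\lambda_{n}$ kills the integrand uniformly.

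Finally, to upgrade $V$ to a \emph{continuous} pseudo-seminorm I would verify sequential lower semicontinuity and invoke Proposition \ref{propUltraborAndPseudoSeminorm}. If $\phi_{n}\to\phi$ in $\Psi$, then for each $x$ and $t$ the continuity of $x(t)\in\Psi'$ gives $\abs{x(t)[\phi]}=\lim_{n}\abs{x(t)[\phi_{n}]}\leq\liminf_{n}N(x,\phi_{n})$; taking the supremum over $t$ yields $N(x,\phi)\leq\liminf_{n}N(x,\phi_{n})$, whence $g(N(x,\phi))\leq\liminf_{n}g(N(x,\phi_{n}))$ by monotonicity and continuity of $g$. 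Fatou's lemma then gives, for each $\alpha$, $\int g(N(x,\phi))\,d\mu_{\alpha}\leq\liminf_{n}\int g(N(x,\phi_{n}))\,d\mu_{\alpha}\leq\liminf_{n}V(\phi_{n})$, and taking the supremum over $\alpha$ yields $V(\phi)\leq\liminf_{n}V(\phi_{n})$. Thus $V$ is a sequentially lower semicontinuous pseudo-seminorm on the ultrabornological space $\Psi$, and Proposition \ref{propUltraborAndPseudoSeminorm} concludes that $V$ is continuous.
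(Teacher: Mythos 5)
Your proposal is correct and follows essentially the same architecture as the paper's proof: verify the four pseudo-seminorm axioms (using subadditivity and monotonicity of $r\mapsto r/(1+r)$ for (1)--(2) and uniform tightness of the one-dimensional projections for (3)), then establish sequential lower semicontinuity via Fatou's lemma and invoke Proposition \ref{propUltraborAndPseudoSeminorm}. The one place you genuinely diverge is axiom (4): the paper runs a subsequence-of-a-subsequence argument combined with a Chebyshev-type bound on the sets $\{ \sup_{t}\abs{x(t)[\phi_{m_{k(r)}}]}>2^{-r}\}$, whereas your elementary inequality $g(\abs{\lambda}r)\leq(1+\abs{\lambda})g(r)$ gives $V(\lambda\phi_{m})\leq(1+\abs{\lambda})V(\phi_{m})$ directly, which is shorter and equally valid.
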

\begin{prf}
We show first that $V$ is a pseudo-seminorm. 
\begin{enumerate}
\item If $\phi_{1}, \phi_{2} \in \Psi$, then because $x \mapsto \frac{x}{1+x}$ is a subadditive function it is clear that $V(\phi_{1}+\phi_{2}) \leq V(\phi_{1})+V(\phi_{2})$. 
\item Let $\phi \in \Psi$, $\lambda \in \R$, $\abs{\lambda} \leq 1$. Because the function $x \mapsto \frac{x}{1+x}$ is increasing, $V(\lambda \phi) \leq V(\phi)$. 
\item We follow some ideas from the proof Lemma 3.3 in \cite{Mitoma:1983}. Let $\lambda_{m} \rightarrow 0$, $\epsilon>0$ , and $\phi \in \Psi$. Because the family $(\mu_{\alpha} \circ \Pi_{\phi}^{-1}: \alpha \in A)$ is uniformly tight on $D_{T}(\R)$, it follows from Theorem 2.4.3 in \cite{KallianpurXiong} that there exists $r(\epsilon)$ such that for all $\alpha \in A$
\begin{flalign*}
& \mu_{\alpha} \left( x \in D_{T}(\Psi'_{\beta}): \sup_{t \in [0,T]} \abs{x(t)[\phi]} > r(\epsilon) \right) \\
& = \mu_{\alpha} \circ \Pi_{\phi}^{-1} \left( y \in D_{T}(\R): \sup_{t \in [0,T]} \abs{y(t)} > r(\epsilon) \right)< \epsilon. 
\end{flalign*}
Let $N \in \N$ such that $\forall m \geq N$, $\lambda_{m} r(\epsilon) < \epsilon$. Let 
$$\Gamma=\left\{ x \in D_{T}(\Psi'_{\beta}): \sup_{t \in [0,T]} \abs{x(t)[\phi]} > r(\epsilon) \right\}.$$
Then, $\forall m \geq N$ we have 
\begin{eqnarray*}
V(\lambda_{m}\phi) & = & \sup_{\alpha \in A} \int_{D_{T}(\Psi'_{\beta})} \frac{\sup_{t \in [0,T]} \abs{x(t)[\lambda_{m}\phi]}}{1+\sup_{t \in [0,T]} \abs{x(t)[\lambda_{m}\phi]}} d \mu_{\alpha}  \\ 
& \leq &  \sup_{\alpha \in A}  \mu_{\alpha}(\Gamma) + \frac{\epsilon}{1+\epsilon} < 2 \epsilon.
\end{eqnarray*}
Therefore, $\displaystyle{\lim_{m \rightarrow \infty} V(\lambda_{m} \phi)=0}$. 
\item Let $\lambda \in \R$ and let $(\phi_{m} )_{m \in \N} \subseteq \Psi$ such that $\displaystyle{\lim_{m \rightarrow \infty} V(\phi_{m})=0}$. Consider any subsequence $(\phi_{m_{k}} )_{k \in \N}$ of $(\phi_{m} )_{m \in \N}$. Then  $\displaystyle{\lim_{k \rightarrow \infty} V(\phi_{m_{k}})=0}$. Hence, for each $r \in \N$ there exists $\phi_{m_{k(r)}}$ such that 
\begin{equation*}
V(\phi_{m_{k(r)}})=\sup_{\alpha \in A} \int_{D_{T}(\Psi'_{\beta})} \frac{\sup_{t \in [0,T]} \abs{x(t)[\phi_{m_{k(r)}}]}}{1+\sup_{t \in [0,T]} \abs{x(t)[\phi_{m_{k(r)}}]}} d \mu_{\alpha} \leq \frac{1}{2^{r+2}}.  
\end{equation*} 
Therefore, 
\begin{flalign*}
& \sup_{\alpha \in A}  \mu_{\alpha} \left( x \in D_{T}(\Psi'_{\beta}):  \sup_{t \in [0,T]} \abs{x(t)[\phi_{m_{k(r)}}]} > 2^{-r} \right) \\
& = \sup_{\alpha \in A}  \mu_{\alpha} \left( x \in D_{T}(\Psi'_{\beta}):  \frac{\sup_{t \in [0,T]} \abs{x(t)[\phi_{m_{k(r)}}]}}{1+\sup_{t \in [0,T]} \abs{x(t)[\phi_{m_{k(r)}}]}} > \frac{2^{-r}}{1+2^{-r}} \right) \\
& \leq \frac{1+2^{-r}}{2^{-r}} \sup_{\alpha \in A} \int_{D_{T}(\Psi'_{\beta})} \frac{\sup_{t \in [0,T]} \abs{x(t)[\phi_{m_{k(r)}}]}}{1+\sup_{t \in [0,T]} \abs{x(t)[\phi_{m_{k(r)}}]}} d \mu_{\alpha} \\
& \leq 3 \cdot \frac{1}{2^{r+2}} \leq \frac{1}{2^{r}} 
\end{flalign*}
Then, it follows that for every $r \in \N$ we have
\begin{eqnarray*}
V(\lambda \phi_{m_{k(r)}}) 
& \leq & \sup_{\alpha \in A}  \mu_{\alpha} \left( x \in D_{T}(\Psi'_{\beta}):  \sup_{t \in [0,T]} \abs{x(t)[\lambda \phi_{m_{k(r)}}]} > \abs{\lambda} 2^{-r} \right) \\
& {} & + \frac{\abs{\lambda}2^{-r}}{1+\abs{\lambda}2^{-r}} \\
& < & 2^{-r} ( 1+\abs{\lambda}).
\end{eqnarray*}
So, we conclude that $\displaystyle{\lim_{r \rightarrow \infty} V(\lambda \phi_{m_{k(r)}})=0}$. Then, as each subsequence of $(V(\lambda \phi_{m}): m \in \N)$ has a further subsequence that converges to $0$, it follows that $\displaystyle{\lim_{m \rightarrow \infty} V(\lambda \phi_{m})=0}$. 
\end{enumerate}

Thus we have shown that $V$ is a pseudo-seminorm on $\Psi$. Our next objective is to show that $V$ is sequentially lower semicontinuous. 

Let $(\phi_{m}: m \in \N)$ be a sequence in $\Psi$ converging to $\phi \in \Psi$. Because for each $x \in D_{T}(\Psi'_{\beta})$ the map $\varphi \mapsto \sup_{t \in [0,T]} \abs{x(t)[\varphi]}$ is lower semicontinuous, it follows from Fatou's lemma that 
\begin{eqnarray*}
V(\phi) 
& \leq & \sup_{\alpha \in A} \int_{D_{T}(\Psi'_{\beta})} \liminf_{m \rightarrow \infty} \frac{\sup_{t \in [0,T]} \abs{x(t)[\phi_{m}]}}{1+\sup_{t \in [0,T]} \abs{x(t)[\phi_{m}]}} d \mu_{\alpha} \\
& \leq & \sup_{\alpha \in A} \liminf_{m \rightarrow \infty} \int_{D_{T}(\Psi'_{\beta})}  \frac{\sup_{t \in [0,T]} \abs{x(t)[\phi_{m}]}}{1+\sup_{t \in [0,T]} \abs{x(t)[\phi_{m}]}} d \mu_{\alpha} \\
& \leq & \liminf_{m \rightarrow \infty} V(\phi_{m}). 
\end{eqnarray*}
Hence $V$ is a sequentially lower semicontinuous pseudo-seminorm on $\Psi$ and  because this space is ultrabornological, Proposition \ref{propUltraborAndPseudoSeminorm} shows that $V$ is continuous on $\Psi$. 
\end{prf}

\begin{proof}[Proof of Proposition  \ref{propSuffiCondTightOnUltrabor}]
Let $\epsilon >0$. From the continuity of the exponential function there exists $\delta_{1} >0$ such that $\abs{ 1-e^{ir}} \leq \frac{\epsilon}{2}$ whenever $\abs{x} < \delta_{1}$. Now, by Lemma \ref{lemmPseudoSeminormMeasures} there exists a continuous seminorm $p$ on $\Psi$ such that $V(\phi) \leq (\delta_{2})^{2}$ $\forall \phi \in B_{p}(1)$, where $\delta_{2} = \min\left\{\delta_{1}, \frac{-1+\sqrt{1+\epsilon}}{2} \right\}$. For given $\phi \in \Psi$, let $\Gamma_{\phi} =\{  x \in D_{T}(\Psi'_{\beta}):  \sup_{t \in [0,T]} \abs{x(t)[\phi]} \leq \delta_{2} \}$. 

Then, $\forall \phi \in B_{p}(1)$ we have that 
\begin{eqnarray*}
\sup_{\alpha \in A} \int_{D_{T}(\Psi'_{\beta})} \, \sup_{t \in [0,T]} \abs{1-e^{ix(t)[\phi]}} d\mu_{\alpha} 
& \leq & \sup_{\alpha \in A} \int_{\Gamma_{\phi}} \, \sup_{t \in [0,T]} \abs{1-e^{ix(t)[\phi]}} d\mu_{\alpha} \\
& {} & + 2 \sup_{\alpha \in A} \mu_{\alpha} (\Gamma_{\phi}^{c}) \\
& \leq & \frac{\epsilon}{2} +2 \frac{(1+\delta_{2})}{\delta_{2}} V(\phi)\\
& \leq & \frac{\epsilon}{2} +2 \frac{\epsilon}{4}= \epsilon. 
\end{eqnarray*}
Finally, the equicontinuity at zero of the family of Fourier transforms $(\widehat{\mu}_{\alpha,t}: \alpha \in A, t \in [0,T])$ is just a consequence of the above result and  the inequality: 
$$ \sup_{\alpha \in A} \sup_{t \in [0,T]} \abs{1-\widehat{\mu}_{\alpha,t}(\phi)}  \leq \sup_{\alpha \in A} \int_{D_{T}(\Psi'_{\beta})} \, \sup_{t \in [0,T]} \abs{1-e^{ix(t)[\phi]}} d\mu_{\alpha}, \quad \forall \, \phi \in \Psi. $$ 
\end{proof}

\subsection{Uniform tightness on $D_{\infty}(\Phi'_{\beta})$} \label{sectionUNSPINFTY}

Let $D_{\infty}(\Phi'_{\beta})$  denote the space of mappings $x: [0,\infty) \rightarrow \Phi'_{\beta}$ wich are c\`{a}dl\`{a}g. For every $s \geq 0$, let $r_{s}: D(\Phi'_{\beta}) \rightarrow D_{s+1}(\Phi'_{\beta})$  be given by
$$r_{s}(x)(t) =
\begin{cases}
x(t) & \mbox{ if } t \in [0,s],\\
(s+1-t)x(t) & \mbox{ if } t \in [s,s+1].
\end{cases}
$$
For every $\gamma \in \Gamma$ let
$$d^{\infty}_{\gamma}(x,y)=\sum_{n =1}^{\infty} \frac{1}{2^{n}} \left( 1 \wedge d^{n}_{\gamma}(r_{n}(x),r_{n}(y)) \right),$$
where for each $n \in \N$, $d^{n}_{\gamma}$ is the pseudometric defined in  \eqref{defSkorokhodPseudometrics} for $T=n$. It is not hard to check that each $d^{\infty}_{\gamma}$ is a pseudometric in $D_{\infty}(\Phi'_{\beta})$. The \emph{Skorokhod topology} in $D_{\infty}(\Phi'_{\beta})$ (see \cite{Jakubowski:1986, Mitoma:1983}) is the completely regular topology generated by the family $( d^{\infty}_{\gamma}: \gamma \in \Gamma)$. An equivalent topology is obtained if we consider a family of seminorms other than $(q_{\gamma}: \gamma \in \Gamma)$ that generates the strong topology $\beta$ on $\Phi'$ (see \cite{Jakubowski:1986}, Theorem 4.3).

An interesting fact on the topology introduced above is that if for some $T>0$ we have that $\mathcal{C}(D_{T}(\Phi'_{\beta})) =  \mathcal{B}(D_{T}(\Phi'_{\beta}))$,  we also have $\mathcal{C}(D_{\infty}(\Phi'_{\beta})) =  \mathcal{B}(D_{\infty}(\Phi'_{\beta}))$ (see Proposition 4.4 in \cite{Jakubowski:1986} and Lemma 9 in \cite{Kouritzin:2016}). Hence, if $\theta$ is a weaker countably Hilbertian topology on  $\Phi$, it follows from Lemma \ref{lemmCylinAndBorelSigmaAlgCoincide} that  $\mathcal{C}(D_{\infty}((\widetilde{\Phi_{\theta}})'_{\beta})) =  \mathcal{B}(D_{\infty}((\widetilde{\Phi_{\theta}})'_{\beta}))$. Moreover, because for each $T>0$ the compact subsets of $D_{T}((\widetilde{\Phi_{\theta}})'_{\beta})$ are metrizable (see Proposition \ref{propProperSkoroSpaceWeakerCHT}(2)), and the canonical inclusion from $D_{\infty}((\widetilde{\Phi_{\theta}})'_{\beta})$ into $D_{T}((\widetilde{\Phi_{\theta}})'_{\beta})$ is continuous, then the compact subsets of $D_{\infty}((\widetilde{\Phi_{\theta}})'_{\beta})$ are also metrizable. 

It is clear from the above arguments that all the concepts of measures and random elements introduced in Sect. \ref{sectionMeRVSkoSpac} and the results proved there are also valid for the space $D_{\infty}(\Phi'_{\beta})$. We leave the reader the task to complete the details. 

The following theorem provides necessary and sufficient conditions for uniform tightness for probability measures on $D_{\infty}(\Phi'_{\beta})$. 

\begin{theo}\label{theoUniformTightnessInfty} 
Let $(\mu_{\alpha}: \alpha \in A)$ be a family of probability measures on $D_{\infty}(\Phi'_{\beta})$ such that it satisfies the following:
\begin{enumerate}
\item \label{condEquiFouTRansInfty} For all $T>0$, the family of Fourier transforms $(\widehat{\mu}_{\alpha,t}: t \in [0,T], \alpha \in A)$ is equicontinuous at zero.
\item \label{condWeakTightInfty} For each  $\phi \in \Phi$, the family $(\mu_{\alpha} \circ \Pi_{\phi}^{-1}: \alpha \in A)$ of probability measures on $D_{\infty}(\R)$ is uniformly tight.    
\end{enumerate}  
Then there exists a weaker countably Hilbertian topology $\theta$ on $\Phi$ such that $(\mu_{\alpha}: \alpha \in A)$ is uniformly tight on $D_{\infty}((\widetilde{\Phi_{\theta}})'_{\beta})$. In particular, the family $(\mu_{\alpha}: \alpha \in A)$ is uniformly tight on $D_{\infty}(\Phi'_{\beta})$. 

Conversely, if $\Phi$ is a barrelled nuclear space and the family $(\mu_{\alpha}: \alpha \in A)$ is uniformly tight on $D_{\infty}(\Phi'_{\beta})$, then conditions \ref{condEquiFouTRans} and \ref{condWeakTight} are satisfied. 
\end{theo}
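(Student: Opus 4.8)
The plan is to reduce the infinite-horizon statement to the already proven finite-horizon Theorem \ref{theoThighnessMeasures}, using the truncation maps $r_{n}$ and the fact that the Skorokhod metric on $D_{\infty}(\Phi'_{\beta})$ is constructed precisely so that each $r_{n}\colon D_{\infty}(\Phi'_{\beta})\to D_{n+1}(\Phi'_{\beta})$ is continuous. First I would exploit hypothesis \ref{condEquiFouTRansInfty} to Radonify all the measures at once. Since Theorem \ref{theoRegularizationTheoremCadlagContinuousVersion} is stated on $[0,\infty)$ and its equicontinuity requirement is exactly ``for every $T>0$ the family $\{X_{t}:t\in[0,T]\}$ is equicontinuous'', the argument of Proposition \ref{propFamilyMeasRadonInCHS} transfers verbatim to the infinite horizon and yields a single weaker countably Hilbertian topology $\theta$ for which every $\mu_{\alpha}$ is a Radon probability measure on $D_{\infty}((\widetilde{\Phi_{\theta}})'_{\beta})$; here I use Lemma \ref{lemmCylinAndBorelSigmaAlgCoincide} together with the coincidence $\mathcal{C}(D_{\infty}((\widetilde{\Phi_{\theta}})'_{\beta}))=\mathcal{B}(D_{\infty}((\widetilde{\Phi_{\theta}})'_{\beta}))$ recorded just before the statement.

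Next, for each $n\in\N$ I would push the measures forward by $r_{n}$, setting $\nu^{n}_{\alpha}=\mu_{\alpha}\circ r_{n}^{-1}$ on $D_{n+1}(\Phi'_{\beta})$, and verify that $(\nu^{n}_{\alpha}:\alpha\in A)$ satisfies the two hypotheses of Theorem \ref{theoThighnessMeasures} on $D_{n+1}(\Phi'_{\beta})$. For condition \ref{condEquiFouTRans}, a direct computation gives $\widehat{\nu^{n}_{\alpha}}_{t}(\phi)=\widehat{\mu}_{\alpha,t}(c_{n}(t)\phi)$, where $c_{n}(t)\in[0,1]$ equals $1$ on $[0,n]$ and $n+1-t$ on $[n,n+1]$; since $p(c_{n}(t)\phi)=c_{n}(t)\,p(\phi)\le p(\phi)$ for any seminorm $p$, the equicontinuity of $(\widehat{\mu}_{\alpha,t}:t\in[0,n+1],\,\alpha\in A)$ supplied by \ref{condEquiFouTRansInfty} is inherited by $(\widehat{\nu^{n}_{\alpha}}_{t})$. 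For condition \ref{condWeakTight}, the identity $\Pi_{\phi}\circ r_{n}=r_{n}\circ\Pi_{\phi}$ (the truncation on $D_{\infty}(\R)$, again denoted $r_{n}$) gives $\nu^{n}_{\alpha}\circ\Pi_{\phi}^{-1}=(\mu_{\alpha}\circ\Pi_{\phi}^{-1})\circ r_{n}^{-1}$, which is uniformly tight on $D_{n+1}(\R)$ because uniform tightness is preserved under the continuous map $r_{n}$ and $(\mu_{\alpha}\circ\Pi_{\phi}^{-1})$ is uniformly tight on $D_{\infty}(\R)$ by \ref{condWeakTightInfty}. Theorem \ref{theoThighnessMeasures} then provides uniform tightness of $(\nu^{n}_{\alpha})$ on $D_{n+1}((\widetilde{\Phi_{\theta}})'_{\beta})$. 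Pooling the countably many Hilbertian seminorms produced as $n$ ranges over $\N$ into one increasing sequence, and keeping this sequence finer than the Radonifying topology exactly as at the end of the proof of Theorem \ref{theoThighnessMeasures}, lets me work with a single weaker countably Hilbertian $\theta$ throughout.

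To assemble the compact set I would fix $\epsilon>0$ and, using the uniform tightness of $(\nu^{n}_{\alpha})$ just obtained, choose for each $n$ a compact $K_{n}\subseteq D_{n+1}((\widetilde{\Phi_{\theta}})'_{\beta})$ with $\sup_{\alpha}\nu^{n}_{\alpha}(K_{n}^{c})=\sup_{\alpha}\mu_{\alpha}(r_{n}^{-1}(K_{n})^{c})<\epsilon\,2^{-n}$, and set $K=\bigcap_{n\in\N}r_{n}^{-1}(K_{n})$. Each $r_{n}$ being continuous, $K$ is closed and satisfies $r_{n}(K)\subseteq K_{n}$ for all $n$; by the characterization of compactness in $D_{\infty}((\widetilde{\Phi_{\theta}})'_{\beta})$ through the truncations $r_{n}$ (legitimate because all compacts of the $D_{n+1}((\widetilde{\Phi_{\theta}})'_{\beta})$ are metrizable, as noted after Proposition \ref{propProperSkoroSpaceWeakerCHT}, cf.\ \cite{Jakubowski:1986}), the set $K$ is compact, and $\sup_{\alpha}\mu_{\alpha}(K^{c})\le\sum_{n}\epsilon\,2^{-n}=\epsilon$. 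This establishes uniform tightness on $D_{\infty}((\widetilde{\Phi_{\theta}})'_{\beta})$, and the continuity of the inclusion $D_{\infty}((\widetilde{\Phi_{\theta}})'_{\beta})\hookrightarrow D_{\infty}(\Phi'_{\beta})$ (Proposition \ref{propProperSkoroSpaceWeakerCHT}(3) on each finite horizon) transfers it to $D_{\infty}(\Phi'_{\beta})$.

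For the converse, with $\Phi$ barrelled nuclear and $(\mu_{\alpha})$ uniformly tight on $D_{\infty}(\Phi'_{\beta})$, I would argue exactly as in the converse of Theorem \ref{theoThighnessMeasures}: the $D_{\infty}$ analogue of Jakubowski's Proposition~1.6 gives uniform tightness of $(\mu_{\alpha}\circ\Pi_{t}^{-1}:\alpha\in A,\,t\in[0,T])$ on $\Phi'_{\beta}$ for every $T>0$, barrelledness and nuclearity convert this into equicontinuity at zero of $(\widehat{\mu}_{\alpha,t}:t\in[0,T],\,\alpha\in A)$, i.e.\ \ref{condEquiFouTRansInfty}, while continuity of $\Pi_{\phi}\colon D_{\infty}(\Phi'_{\beta})\to D_{\infty}(\R)$ delivers \ref{condWeakTightInfty}. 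The main obstacle I anticipate is the bookkeeping in the reduction: confirming that a single countably Hilbertian $\theta$ can be used simultaneously across all horizons $n$ while preserving Radonicity, and invoking the correct $D_{\infty}$ compactness criterion so that $K=\bigcap_{n}r_{n}^{-1}(K_{n})$ is genuinely compact. Once these are in place, the remainder is a routine transcription of the finite-horizon proof.
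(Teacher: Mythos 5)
Your proposal is correct and follows essentially the same route as the paper: reduce to the finite-horizon Theorem \ref{theoThighnessMeasures} via the truncations $r_{n}$, take $\mathcal{K}=\bigcap_{n} r_{n}^{-1}(\mathcal{K}_{n})$ with $\sup_{\alpha}\mu_{\alpha}\circ r_{n}^{-1}(\mathcal{K}_{n}^{c})<\epsilon 2^{-n}$, pool the resulting Hilbertian seminorms into a single weaker countably Hilbertian topology, and obtain the converse from the barrelled case of Theorem \ref{theoThighnessMeasures}. The only divergence is at the compactness of $\mathcal{K}$: rather than invoking a truncation-based compactness criterion for $D_{\infty}$, the paper first extracts (via Proposition 1.6.iv of \cite{Jakubowski:1986}) compacts $K_{n}\subseteq \Phi'_{q_{n}}$ with $\mathcal{K}_{n}\subseteq D_{n}(K_{n})$, so that $\mathcal{K}$ is a closed subset of $D_{\infty}(K)$ for the single compact $K=\bigcap_{n}K_{n}$ with compact space projections, and then applies Lemma 3.3 of \cite{Jakubowski:1986} --- a step you should make explicit, since the Ethier--Kurtz-style criterion you cite must be justified in the completely regular (non-metrizable) setting.
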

\begin{prf}
We start by showing the following: 

\textbf{Claim:} given $\epsilon>0$, there exists a weaker countably Hilbertian topology $\theta_{\epsilon}$ on $\Phi$ and a compact $\mathcal{K}_{\epsilon}$ on $D_{\infty}((\widetilde{\Phi_{\theta_{\epsilon}}})'_{\beta})$ such that $\sup_{\alpha \in A} \mu_{\alpha}(\mathcal{K}_{\epsilon}^{c})  < \epsilon$.

Let $\epsilon >0$. From Theorem \ref{theoThighnessMeasures}, for each $n \in \N$ there exists a continuous Hilbertian seminorm $q_{n}$ on $\Phi$ and $\mathcal{K}_{n} \subseteq D_{n}(\Phi'_{q_{n}})$ compact such that $\sup_{\alpha \in A} \mu_{\alpha} \circ r_{n}^{-1} (\mathcal{K}_{n}^{c}) < \epsilon / 2^{n}$. Then, it follows from Proposition 1.6.iv) in \cite{Jakubowski:1986} that for each $n \in \N$ there exists a compact $K_{n} \subseteq \Phi'_{q_{n}}$ such that $\mathcal{K}_{n} \subseteq D_{n}(K_{n})$. 

Let $\mathcal{K}= \bigcap_{n=1}^{\infty} r_{n}^{-1}(\mathcal{K}_{n}) \subseteq D_{\infty}((\widetilde{\Phi_{\theta}})'_{\beta})$, where $\theta$ is the weaker countably Hilbertian topology on $\Phi$ generated by the family $(q_{n}: n \in \N)$. Because for all $n \in \N$, $r_{n}^{-1}(\mathcal{K}_{n}) \subseteq D_{\infty}(K_{n})$, then if we take $K = \bigcap_{n=1}^{\infty} K_{n}$, then $K$ is a compact subset in $(\widetilde{\Phi_{\theta}})'_{\beta}$ and moreover $\mathcal{K} \subseteq D_{\infty}(K)$ is closed. Hence because $\widetilde{\Phi_{\theta}}$ is separable and metrizable, and $\forall \phi \in \Phi$ we have $\Pi_{\phi}(\mathcal{K})$ is compact in $D_{\infty}(\R)$, then Lemma 3.3 in \cite{Jakubowski:1986} shows that $\mathcal{K}$ is compact in $D_{\infty}((\widetilde{\Phi_{\theta}})'_{\beta})$. Moreover observe that 
$$ \sup_{\alpha \in A} \mu_{\alpha}(\mathcal{K}^{c}) \leq \sup_{\alpha \in A} \sum_{n=1}^{\infty} \mu_{\alpha} \circ r_{n}^{-1} (\mathcal{K}_{n}^{c}) < \epsilon.$$
So we have proved our claim. 

Now, if $(\epsilon_{m}: m \in \N)$ is a decreasing sequence of positive real numbers converging to $0$, then for each $m \in \N$ there exists $\theta_{m}$ and $\mathcal{K}_{m}$ satisfying the properties stated on the claim. But then, if $\theta$ is the weaker countably topology on $\Phi$ generated by the Hilbertian seminorms generating the topologies $\theta_{m}$ for $m \in \N$, then each $\mathcal{K}_{m}$ is compact in $D_{\infty}((\widetilde{\Phi_{\theta}})'_{\beta})$ and the family $(\mu_{\alpha}: \alpha \in A)$ is therefore uniformly tight on $D_{\infty}((\widetilde{\Phi_{\theta}})'_{\beta})$, and hence is uniformly tight on $D_{\infty}(\Phi'_{\beta})$.   

For the converse, if $\Phi$ is barrelled and $(\mu_{\alpha}: \alpha \in A)$ is tight on $D_{\infty}(\Phi'_{\beta})$, then for each $T>0$ the family $(\mu_{\alpha} \circ r_{T}^{-1} : \alpha \in A)$ is tight on $D_{T}((\widetilde{\Phi_{\theta}})'_{\beta})$ and the result follows from Theorem \ref{theoThighnessMeasures}.    
\end{prf}

If in the above proof we use Theorem \ref{theoTightMeasuresUltrab} instead of Theorem \ref{theoThighnessMeasures}, we get the following result for ultrabornological nuclear spaces.

\begin{theo} \label{theoUniformTightnessUltrabInfty}
Let $(\mu_{\alpha}: \alpha \in A)$ be a family of probability measures on $D_{\infty}(\Phi'_{\beta})$ where $\Phi$ is an ultrabornological nuclear space. Then the family $(\mu_{\alpha}: \alpha \in A)$ is uniformly tight on $D_{\infty}(\Phi'_{\beta})$ if and only if $\forall \phi \in \Phi$ the family $(\mu_{\alpha} \circ \Pi_{\phi}^{-1}: \alpha \in A)$ is uniformly tight on $D_{\infty}(\R)$. Moreover there exists a weaker countably Hilbertian topology $\theta$ on $\Phi$ such that $(\mu_{\alpha}: \alpha \in A)$ is uniformly tight on $D_{\infty}((\widetilde{\Phi_{\theta}})'_{\beta})$.  
\end{theo}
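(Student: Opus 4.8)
The plan is to mirror, almost verbatim, the proof of Theorem \ref{theoUniformTightnessInfty}, replacing every appeal to Theorem \ref{theoThighnessMeasures} by an appeal to Theorem \ref{theoTightMeasuresUltrab} and exploiting the fact that for an ultrabornological nuclear $\Phi$ the latter requires only the uniform tightness of the one-dimensional space projections. Thus the entire forward direction should run off the single hypothesis that $(\mu_{\alpha} \circ \Pi_{\phi}^{-1}: \alpha \in A)$ is uniformly tight on $D_{\infty}(\R)$ for every $\phi \in \Phi$, with no separate equicontinuity-of-Fourier-transforms assumption.

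First I would re-establish the analogue of the \textbf{Claim} from the proof of Theorem \ref{theoUniformTightnessInfty}: for each $\epsilon>0$ there is a weaker countably Hilbertian topology $\theta_{\epsilon}$ on $\Phi$ and a compact $\mathcal{K}_{\epsilon} \subseteq D_{\infty}((\widetilde{\Phi_{\theta_{\epsilon}}})'_{\beta})$ with $\sup_{\alpha} \mu_{\alpha}(\mathcal{K}_{\epsilon}^{c}) < \epsilon$. To do so I fix $n \in \N$ and push the measures forward by the truncation map $r_{n}$ to obtain the family $(\mu_{\alpha} \circ r_{n}^{-1}: \alpha \in A)$ on $D_{n}(\Phi'_{\beta})$. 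The key verification is that this family meets the hypothesis of Theorem \ref{theoTightMeasuresUltrab}, i.e.\ that for each $\phi \in \Phi$ the image $(\mu_{\alpha} \circ r_{n}^{-1}) \circ \Pi_{\phi}^{-1}$ is uniformly tight on $D_{n}(\R)$. This follows from the commutation relation $\Pi_{\phi} \circ r_{n} = r_{n}^{\R} \circ \Pi_{\phi}$, where $r_{n}^{\R}$ is the corresponding (continuous) truncation on the real-valued path space, so that $(\mu_{\alpha} \circ r_{n}^{-1}) \circ \Pi_{\phi}^{-1} = (\mu_{\alpha} \circ \Pi_{\phi}^{-1}) \circ (r_{n}^{\R})^{-1}$ is the continuous image of a uniformly tight family, hence uniformly tight.

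With the hypothesis checked, Theorem \ref{theoTightMeasuresUltrab} (applicable precisely because $\Phi$ is ultrabornological nuclear) yields for each $n$ a continuous Hilbertian seminorm $q_{n}$ and a compact $\mathcal{K}_{n} \subseteq D_{n}(\Phi'_{q_{n}})$ with $\sup_{\alpha} \mu_{\alpha} \circ r_{n}^{-1}(\mathcal{K}_{n}^{c}) < \epsilon/2^{n}$, and no Fourier-transform condition is needed. From here the assembly is identical to that of Theorem \ref{theoUniformTightnessInfty}: by Proposition 1.6.iv) in \cite{Jakubowski:1986} I extract compacts $K_{n} \subseteq \Phi'_{q_{n}}$ with $\mathcal{K}_{n} \subseteq D_{n}(K_{n})$, let $\theta$ be generated by $(q_{n})$ and set $K = \bigcap_{n} K_{n}$, and verify that $\mathcal{K} = \bigcap_{n} r_{n}^{-1}(\mathcal{K}_{n})$ is a closed subset of $D_{\infty}(K)$ with compact space projections; Lemma 3.3 in \cite{Jakubowski:1986}, together with the separability and metrizability of $\widetilde{\Phi_{\theta}}$, then makes $\mathcal{K}$ compact in $D_{\infty}((\widetilde{\Phi_{\theta}})'_{\beta})$, with $\sup_{\alpha} \mu_{\alpha}(\mathcal{K}^{c}) < \epsilon$. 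Running this over a decreasing sequence $\epsilon_{m} \to 0$ and generating a single $\theta$ from all the seminorms so produced gives uniform tightness on $D_{\infty}((\widetilde{\Phi_{\theta}})'_{\beta})$, and hence on $D_{\infty}(\Phi'_{\beta})$ via continuity of the canonical inclusion.

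For the converse I would use that an ultrabornological space is barrelled, so the converse half of Theorem \ref{theoUniformTightnessInfty} applies directly: uniform tightness on $D_{\infty}(\Phi'_{\beta})$ forces $(\mu_{\alpha} \circ \Pi_{\phi}^{-1}: \alpha \in A)$ to be uniformly tight on $D_{\infty}(\R)$ for every $\phi$, using only the continuity of $\Pi_{\phi}$. The main obstacle is not the assembly, which is a transcription of the earlier proof, but confirming that the single projection-tightness hypothesis genuinely propagates through the truncations $r_{n}$ so as to meet the hypothesis of Theorem \ref{theoTightMeasuresUltrab}; once the commutation $\Pi_{\phi} \circ r_{n} = r_{n}^{\R} \circ \Pi_{\phi}$ and the continuity of $r_{n}^{\R}$ on $D_{\infty}(\R)$ are in hand, everything else is bookkeeping.
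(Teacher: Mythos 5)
Your proposal is correct and is essentially the paper's own argument: the paper proves this theorem by the one-line remark that one reruns the proof of Theorem \ref{theoUniformTightnessInfty} with Theorem \ref{theoTightMeasuresUltrab} in place of Theorem \ref{theoThighnessMeasures}, which is exactly your plan. Your explicit verification that the projection-tightness hypothesis passes through the truncations via $\Pi_{\phi} \circ r_{n} = r_{n}^{\R} \circ \Pi_{\phi}$ is a detail the paper leaves implicit, and it checks out.
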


\section{Weak Convergence on the Skorokhod Space} \label{sectionWeaConvSkoSpa}

\begin{assu}
Unless otherwise indicated, in this section we will always assume that $\Phi$ is a nuclear space. 
\end{assu}

\subsection{Weak Convergence of Probability Measures} 

The following result shows how the theory introduced on the previous sections can be used to prove the weak convergence of a sequence of probability measures on $D_{\infty}(\Phi'_{\beta})$. 

\begin{theo}[L\'{e}vy's continuity theorem on $D_{\infty}(\Phi'_{\beta})$] \label{theoWeakConveMeasures}
Let $(\mu_{n}: n \in \N)$ be a sequence of probability measures on $D_{\infty}(\Phi'_{\beta})$ that satisfies the following:
\begin{enumerate}
\item For each $T>0$, the family of Fourier transforms $(\widehat{\mu}_{n,t}: t \in [0,T], n \in \N)$ is equicontinuous at zero.
\item For each $\phi \in \Phi$, the family $(\mu_{n} \circ \Pi_{\phi}^{-1}: n \in \N)$  of probability measures on $D_{\infty}(\R)$ is uniformly tight.
\item $\forall$ $m \in \N$, $\phi_{1}, \dots, \phi_{m} \in \Phi$, $t_{1}, \dots, t_{m} \geq 0$, 
$$ \mu_{n} \circ \left( \Pi^{\phi_{1}, \dots, \phi_{m}}_{t_{1}, \dots, t_{m}} \right)^{-1} \Rightarrow \nu^{\phi_{1}, \dots, \phi_{m}}_{t_{1}, \dots, t_{m}}, $$    
where $\nu^{\phi_{1}, \dots, \phi_{m}}_{t_{1}, \dots, t_{m}}$ is a Borel probability measure on $\R^{m}$. 
\end{enumerate}
Then there exist a weaker countably Hilbertian topology $\theta$ on $\Phi$ and a  probability measure $\mu$ on $D_{\infty}((\widetilde{\Phi_{\theta}})'_{\beta})$ such that $\mu_{n} \Rightarrow \mu$ in $\goth{M}^{1}(D_{\infty}((\widetilde{\Phi_{\theta}})'_{\beta}))$. Moreover $\mu$ is the unique (up to equivalence) probability measure on $D_{\infty}(\Phi'_{\beta})$ such that $\mu_{n} \Rightarrow \mu$ in $\goth{M}^{1}(D_{\infty}(\Phi'_{\beta}))$. 
\end{theo}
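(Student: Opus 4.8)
The plan is to transport the problem to the better-behaved space $D_{\infty}((\widetilde{\Phi_{\theta}})'_{\beta})$ by means of uniform tightness, apply the direct half of Prokhorov's theorem there to extract weakly convergent subsequences, and then use the finite-dimensional convergence in hypothesis (3) to pin down the limit. First I would observe that hypotheses (1) and (2) are precisely the assumptions of Theorem \ref{theoUniformTightnessInfty}; applying it produces a weaker countably Hilbertian topology $\theta$ on $\Phi$ for which $(\mu_{n}: n \in \N)$ is uniformly tight on $D_{\infty}((\widetilde{\Phi_{\theta}})'_{\beta})$. On this space the compact subsets are metrizable and, by Lemma \ref{lemmCylinAndBorelSigmaAlgCoincide} together with the discussion in Section \ref{sectionUNSPINFTY}, the cylindrical and Borel $\sigma$-algebras coincide.

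Next I would invoke the direct part of Prokhorov's theorem. Since the $\mu_{n}$ are Radon and uniformly tight and the compacts are metrizable, the tightness confines essentially all the mass to a $\sigma$-compact, hence Lusin and metrizable, subspace; restricting the measures there and applying the classical metric version of Prokhorov's theorem yields, from any subsequence of $(\mu_{n})$, a further subsequence converging weakly to some $\mu \in \goth{M}^{1}(D_{\infty}((\widetilde{\Phi_{\theta}})'_{\beta}))$. Thus $(\mu_{n})$ is relatively sequentially compact for the weak topology on this space.

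The crux is the identification of such a limit, and here the main obstacle appears: the space-time projections $\Pi^{\phi_{1}, \dots, \phi_{m}}_{t_{1}, \dots, t_{m}}$ are \emph{not} continuous on the Skorokhod space, being continuous at a path only where that path is continuous at the relevant times. To circumvent this I would fix a subsequential limit $\mu$ and use the standard fact that the set of times to which $\mu$ assigns positive probability of a jump is at most countable; away from this countable set each time projection $\Pi_{t}$ is $\mu$-almost surely continuous, so the continuous mapping theorem gives convergence of the corresponding finite-dimensional distributions along the subsequence. Comparing with hypothesis (3) forces $\mu \circ (\Pi^{\phi_{1}, \dots, \phi_{m}}_{t_{1}, \dots, t_{m}})^{-1} = \nu^{\phi_{1}, \dots, \phi_{m}}_{t_{1}, \dots, t_{m}}$ for all times lying in the dense, co-countable complement. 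Since càdlàg paths are determined by their values on any dense set of times and the cylindrical $\sigma$-algebra coincides with the Borel one on $D_{\infty}((\widetilde{\Phi_{\theta}})'_{\beta})$, these finite-dimensional distributions determine $\mu$ uniquely. Hence any two subsequential limits agree, and by the usual subsequence argument the whole sequence $(\mu_{n})$ converges weakly to this single $\mu$ in $\goth{M}^{1}(D_{\infty}((\widetilde{\Phi_{\theta}})'_{\beta}))$.

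Finally I would transfer the conclusion to $D_{\infty}(\Phi'_{\beta})$: the canonical inclusion $D_{\infty}((\widetilde{\Phi_{\theta}})'_{\beta}) \hookrightarrow D_{\infty}(\Phi'_{\beta})$ is continuous (the $D_{\infty}$ analogue of Proposition \ref{propProperSkoroSpaceWeakerCHT}(3)), and pushing measures forward along a continuous map preserves weak convergence, so $\mu_{n} \Rightarrow \mu$ on $D_{\infty}(\Phi'_{\beta})$ as well. For the uniqueness there, any probability measure that is a weak limit of $(\mu_{n})$ is, by tightness, carried on a metrizable $\sigma$-compact subset where the cylinder sets generate the Borel $\sigma$-algebra; it is therefore determined by its finite-dimensional distributions, which hypothesis (3) fixes, giving the asserted uniqueness up to equivalence.
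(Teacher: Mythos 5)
Your proposal is correct and follows essentially the same route as the paper: uniform tightness via Theorem \ref{theoUniformTightnessInfty}, extraction of weakly convergent subsequences on $D_{\infty}((\widetilde{\Phi_{\theta}})'_{\beta})$ (whose compacts are metrizable), identification of the limit through the finite-dimensional distributions together with the identity $\mathcal{C}(D_{\infty}((\widetilde{\Phi_{\theta}})'_{\beta}))=\mathcal{B}(D_{\infty}((\widetilde{\Phi_{\theta}})'_{\beta}))$, and transfer to $D_{\infty}(\Phi'_{\beta})$ by continuity of the canonical inclusion. If anything you are more careful than the published proof at the identification step: the paper passes directly from hypothesis (3) to equality of the space-time marginals of the two subsequential limits without addressing the discontinuity of $\Pi^{\phi_{1},\dots,\phi_{m}}_{t_{1},\dots,t_{m}}$ on the Skorokhod space, and your detour through the at most countable set of fixed jump times of each limit, combined with right-continuity of the paths to recover all cylinder sets from a dense set of times, is precisely what is needed to make that step rigorous.
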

\begin{prf}
First by \emph{(1)}, \emph{(2)} and Theorem \ref{theoUniformTightnessInfty}, there exists a weaker countably Hilbertian topology $\theta$ on $\Phi$ such that $(\mu_{n}: n \in \N)$ is uniformly tight on $D_{\infty}((\widetilde{\Phi_{\theta}})'_{\beta})$. 
As $D_{\infty}((\widetilde{\Phi_{\theta}})'_{\beta})$ is a completely regular topological  space whose compact subsets are metrizable (see Sect. \ref{sectionUNSPINFTY}), the fact that $(\mu_{n}: n \in \N)$ is uniformly tight on $D_{\infty}((\widetilde{\Phi_{\theta}})'_{\beta})$ implies that every subsequence of $(\mu_{n}: n \in \N)$ contains a further weakly convergent subsequence (see \cite{BogachevMT}, Theorem 8.6.7, p.206).
    
Let $(\mu^{1}_{n}: n \in \N)$ and $(\mu^{2}_{n}: n \in \N)$ two subsequences of $(\mu_{n}: n \in \N)$. Then, $(\mu^{1}_{n}: n \in \N)$ has a subsequence $(\mu^{1}_{n_{k}}: k \in \N)$ that converges weakly to $\nu^{1}$ and $(\mu^{2}_{n}: n \in \N)$ has a subsequence $(\mu^{2}_{n_{k}}: k \in \N)$ that converges weakly to $\nu^{2}$. The hypothesis \emph{(3)} shows that 
$$ \nu_{1} \circ \left( \Pi^{\phi_{1}, \dots, \phi_{m}}_{t_{1}, \dots, t_{m}} \right)^{-1} = \nu_{2} \circ \left( \Pi^{\phi_{1}, \dots, \phi_{m}}_{t_{1}, \dots, t_{m}} \right)^{-1}, $$  
for every $m \in \N$, $\phi_{1}, \dots, \phi_{m} \in \Phi$ and $t_{1}, \dots, t_{m} \ge 0$. 
Therefore, $\nu_{1}$ and $\nu_{2}$ coincide on all the cylinder sets and hence they coincide on $\mathcal{C}(D_{\infty}((\widetilde{\Phi_{\theta}})'_{\beta}))$. But because $\mathcal{C}(D_{\infty}((\widetilde{\Phi_{\theta}})'_{\beta}))=
\mathcal{B}(D_{\infty}((\widetilde{\Phi_{\theta}})'_{\beta}))$ (see Sect. \ref{sectionUNSPINFTY}), then $\nu_{1}=\nu_{2}$.
We have shown that every subsequence of $(\mu_{n}: n \in \N)$ contains a further subsequence that converges weakly to the same limit $\mu$ in $\goth{M}^{1}(D_{T}((\widetilde{\Phi_{\theta}})'_{\beta}))$. Hence, Theorem 2.6 in \cite{Billingsley} shows that $\mu_{n} \Rightarrow \mu$ in $\goth{M}^{1}((\widetilde{\Phi_{\theta}})'_{\beta})$. 

Finally because the inclusion $j_{\theta}$ from $(\widetilde{\Phi_{\theta}})'_{\beta}$ into $\Phi'_{\beta}$ is linear and continuous, then $\forall \, f \in C_{b}(\Phi'_{\beta})$ we have $f \circ j_{\theta} \in C_{b}(\goth{M}^{1}((\widetilde{\Phi_{\theta}})'_{\beta}))$. Therefore, the fact that $\mu_{n} \Rightarrow \mu$ in $\goth{M}^{1}(D_{\infty}((\widetilde{\Phi_{\theta}})'_{\beta}))$ implies that $\mu_{n} \Rightarrow \mu$ in $\goth{M}^{1}(D_{\infty}(\Phi'_{\beta}))$. This also shows the uniqueness of $\mu$. 
\end{prf}

For the case when $\Phi$ is an ultrabornological nuclear space, if in the proof of Theorem \ref{theoWeakConveMeasures} we use Theorem \ref{theoUniformTightnessUltrabInfty} instead of Theorem \ref{theoUniformTightnessInfty} we obtain the following result. 
 
\begin{theo} \label{theoWeakConveMeasuresUltrab}
Let $\Phi$ be an ultrabornological nuclear space and let $(\mu_{n}: n \in \N)$ be a sequence of Borel probability measures on $D_{\infty}(\Phi'_{\beta})$ such that for each $\phi \in \Phi$, the family $(\mu_{n} \circ \Pi_{\phi}^{-1}: n \in \N)$ is  tight on $D_{\infty}(\R)$.  Assume further that $\forall$ $m \in \N$, $\phi_{1}, \dots, \phi_{m} \in \Phi$, $t_{1}, \dots, t_{m} \geq 0$, there exists a probability measure $\nu^{\phi_{1}, \dots, \phi_{m}}_{t_{1}, \dots, t_{m}}$ on $\R^{m}$ such that
$$ \mu_{n} \circ \left( \Pi^{\phi_{1}, \dots, \phi_{m}}_{t_{1}, \dots, t_{m}} \right)^{-1} \Rightarrow \nu^{\phi_{1}, \dots, \phi_{m}}_{t_{1}, \dots, t_{m}}. $$    
Then there exist a weaker countably Hilbertian topology $\theta$ on $\Phi$ and a  probability measure $\mu$ on $D_{\infty}((\widetilde{\Phi_{\theta}})'_{\beta})$ such that $\mu_{n} \Rightarrow \mu$ in $\goth{M}^{1}(D_{\infty}((\widetilde{\Phi_{\theta}})'_{\beta}))$. Moreover $\mu$ is the unique (up to equivalence) probability measure on $D_{\infty}(\Phi'_{\beta})$ such that $\mu_{n} \Rightarrow \mu$ in $\goth{M}^{1}(D_{\infty}(\Phi'_{\beta}))$. 
\end{theo}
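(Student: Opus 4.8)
The plan is to follow the proof of Theorem \ref{theoWeakConveMeasures} almost verbatim, replacing the appeal to Theorem \ref{theoUniformTightnessInfty} with one to Theorem \ref{theoUniformTightnessUltrabInfty}. The decisive gain from the ultrabornological hypothesis is that the equicontinuity of the Fourier transforms (condition (1) of Theorem \ref{theoWeakConveMeasures}) need not be assumed separately: by Theorem \ref{theoUniformTightnessUltrabInfty}, for an ultrabornological nuclear $\Phi$ the uniform tightness of the space projections $(\mu_{n} \circ \Pi_{\phi}^{-1}: n \in \N)$ for each $\phi \in \Phi$ already produces a weaker countably Hilbertian topology $\theta$ on $\Phi$ for which $(\mu_{n}: n \in \N)$ is uniformly tight on $D_{\infty}((\widetilde{\Phi_{\theta}})'_{\beta})$.

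First I would invoke Theorem \ref{theoUniformTightnessUltrabInfty} to obtain such a $\theta$ together with the uniform tightness of $(\mu_{n}: n \in \N)$ on $D_{\infty}((\widetilde{\Phi_{\theta}})'_{\beta})$. Since this space is completely regular with metrizable compact subsets (see Sect. \ref{sectionUNSPINFTY}), uniform tightness yields relative sequential weak compactness: every subsequence of $(\mu_{n}: n \in \N)$ admits a further weakly convergent subsequence (by \cite{BogachevMT}, Theorem 8.6.7).

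Next I would identify the limit. Given two subsequential weak limits $\nu_{1}$ and $\nu_{2}$, the finite-dimensional convergence hypothesis forces
$$ \nu_{1} \circ \left( \Pi^{\phi_{1}, \dots, \phi_{m}}_{t_{1}, \dots, t_{m}} \right)^{-1} = \nu_{2} \circ \left( \Pi^{\phi_{1}, \dots, \phi_{m}}_{t_{1}, \dots, t_{m}} \right)^{-1} = \nu^{\phi_{1}, \dots, \phi_{m}}_{t_{1}, \dots, t_{m}} $$
for all $m \in \N$, $\phi_{1}, \dots, \phi_{m} \in \Phi$ and $t_{1}, \dots, t_{m} \geq 0$; hence $\nu_{1}$ and $\nu_{2}$ agree on all cylinder sets and therefore on $\mathcal{C}(D_{\infty}((\widetilde{\Phi_{\theta}})'_{\beta}))$. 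Because $\mathcal{C}(D_{\infty}((\widetilde{\Phi_{\theta}})'_{\beta})) = \mathcal{B}(D_{\infty}((\widetilde{\Phi_{\theta}})'_{\beta}))$ (Lemma \ref{lemmCylinAndBorelSigmaAlgCoincide} together with the extension to $D_{\infty}$ recalled in Sect. \ref{sectionUNSPINFTY}), this gives $\nu_{1} = \nu_{2} =: \mu$. Since every subsequence of $(\mu_{n}: n \in \N)$ thus has a further subsequence converging to the common limit $\mu$, the standard subsequence criterion (\cite{Billingsley}, Theorem 2.6) yields $\mu_{n} \Rightarrow \mu$ in $\goth{M}^{1}(D_{\infty}((\widetilde{\Phi_{\theta}})'_{\beta}))$.

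Finally I would transfer the convergence to $D_{\infty}(\Phi'_{\beta})$ using the continuous inclusion $j_{\theta}$ from $(\widetilde{\Phi_{\theta}})'_{\beta}$ into $\Phi'_{\beta}$: every $f \in C_{b}(D_{\infty}(\Phi'_{\beta}))$ pulls back to $f \circ j_{\theta} \in C_{b}(D_{\infty}((\widetilde{\Phi_{\theta}})'_{\beta}))$, so $\mu_{n} \Rightarrow \mu$ in $\goth{M}^{1}(D_{\infty}(\Phi'_{\beta}))$ as well, and the same coincidence of $\sigma$-algebras gives uniqueness of the limit there. The only genuine point requiring care — and hence the main obstacle — is the passage from the bare ultrabornological hypothesis to uniform tightness without any equicontinuity assumption on the Fourier transforms; this is precisely the content of Theorem \ref{theoUniformTightnessUltrabInfty}, so once that is in hand the remainder is a routine repetition of the argument for Theorem \ref{theoWeakConveMeasures}.
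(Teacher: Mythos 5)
Your proposal is correct and coincides with the paper's own argument: the paper proves this theorem precisely by rerunning the proof of Theorem \ref{theoWeakConveMeasures} with Theorem \ref{theoUniformTightnessUltrabInfty} in place of Theorem \ref{theoUniformTightnessInfty}, which is exactly what you do. The identification of the limit via cylinder sets, the subsequence criterion, and the transfer to $D_{\infty}(\Phi'_{\beta})$ through the continuous inclusion all match the paper's reasoning.
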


\begin{rema}
Clearly, Theorems \ref{theoWeakConveMeasures} and \ref{theoWeakConveMeasuresUltrab} can be also formulated for measures on $D_{T}(\Phi'_{\beta})$. We leave to the reader  the task of stating and proving them by using Theorems \ref{theoThighnessMeasures} and \ref{theoTightMeasuresUltrab}. 
\end{rema}

\subsection{Weak Convergence of (Cylindrical) Processes in the Skorokhod Space}

In this section we apply our results to provide sufficient conditions for the weak convergence in $D_{\infty}(\Phi'_{\beta})$ of a sequence of c\`{a}dl\`{a}g processes. This is done in the following result formulated in the more general setting of cylindrical processes:

\begin{theo}  \label{theoWeakConvProcesses}
For each $n \in \N$, let $X^{n}=\{X^{n}_{t} \}_{t \geq 0}$ be a cylindrical process in $\Phi'$ (e.g. a $\Phi'_{\beta}$-valued process) such that: 
\begin{enumerate}
\item For each $\phi \in \Phi$ and each $n \in \N$, the real-valued process $X^{n}(\phi)=\{ X^{n}_{t}(\phi) \}_{t \geq 0}$ is c\`{a}dl\`{a}g.
\item For every $T>0$, the family $\{ X^{n}_{t}: t \in [0,T], n \in \N \}$ of linear maps from $\Phi$ into $L^{0} \ProbSpace$ is equicontinuous at zero.
\item For each $\phi \in \Phi$, the sequence of distributions of $X^{n}(\phi)$ is uniformly tight on $D_{\infty}(\R)$.   
\item $\forall$ $m \in \N$, $\phi_{1}, \dots, \phi_{m} \in \Phi$, $t_{1}, \dots, t_{m} \geq 0$, the probability distribution of 
$ (X^{n}_{t_{1}}(\phi_{1}), \dots, X^{n}_{t_{m}}(\phi_{m}))$ converges in distribution to some probability measure on $\R^{m}$.   
\end{enumerate}
Then there exist a weaker countable Hilbertian topology $\theta$ on $\Phi$ and some $D_{\infty}((\widetilde{\Phi_{\theta}})'_{\beta})$-valued random variables $Y$ and $Y^{n}$, $ n \in \N$, such that  \begin{enumerate}[label=(\alph*)]
\item $\forall \, \phi \in \Phi$, $n \in \N$, the real-valued c\`{a}dl\`{a}g processes $X^{n}(\phi)$ and $Y^{n}[\phi]$ are indistinguishable, 
\item the sequence $( Y^{n}: n \in \N)$ is tight on $D_{\infty}((\widetilde{\Phi_{\theta}})'_{\beta})$, and
\item $Y^{n} \Rightarrow Y$ in $D_{\infty}((\widetilde{\Phi_{\theta}})'_{\beta})$.    
\end{enumerate}
Moreover $ (b)$ and $(c)$ are also satisfied for $Y$ and $(Y^{n}:n \in \N)$ as $D_{\infty}(\Phi'_{\beta})$-valued random variables.
\end{theo}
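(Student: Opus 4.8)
The plan is to reduce the statement to the measure-level L\'{e}vy continuity theorem (Theorem \ref{theoWeakConveMeasures}), the only genuinely new ingredient being the pathwise realisation (a), which forces the use of the regularization theorem rather than merely the canonical-random-variable construction. First I would note that, by condition (1), for each $n$ the assignment $\phi \mapsto \{X^{n}_{t}(\phi)\}_{t \geq 0}$ takes values in the $D_{\infty}(\R)$-valued random variables, so $X^{n}$ induces a cylindrical random variable in $D_{\infty}(\Phi'_{\beta})$; let $\mu_{n}$ be its cylindrical distribution. Condition (2), that $\{X^{n}_{t}: t \in [0,T], n \in \N\}$ is equicontinuous at zero \emph{uniformly in both $t$ and $n$}, translates, via the Kallenberg-type inequality used in the proof of Theorem \ref{theoMinlosSkorokhodSpace}, into the assertion that for every $T>0$ the family of Fourier transforms $(\widehat{\mu}_{n,t}: t \in [0,T], n \in \N)$ is equicontinuous at zero.

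Next I would regularise all the $X^{n}$ simultaneously. Applying the $D_{\infty}$-analogue of the regularization theorem (Theorem \ref{theoRegulaTheoSkorokSpace}) together with the uniform-in-$n$ bookkeeping already carried out in Proposition \ref{propFamilyMeasRadonInCHS} (where a single $\theta$ is extracted from the equicontinuity of an entire family of Fourier transforms), I obtain one weaker countably Hilbertian topology $\theta$ and, for each $n$, a $D_{\infty}((\widetilde{\Phi_{\theta}})'_{\beta})$-valued random variable $Y^{n}$ with $X^{n}(\phi)$ and $Y^{n}[\phi]$ indistinguishable for every $\phi$; this is precisely (a). In particular each $\mu_{n}$ is now a genuine Radon probability measure on $D_{\infty}((\widetilde{\Phi_{\theta}})'_{\beta})$, hence on $D_{\infty}(\Phi'_{\beta})$, and $\mu_{n} \circ \Pi_{\phi}^{-1}$ is the law of $X^{n}(\phi)$.

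It then remains to invoke Theorem \ref{theoWeakConveMeasures} for the sequence $(\mu_{n})$: its hypothesis (1) is the equicontinuity just established, hypothesis (2) is exactly our condition (3), and hypothesis (3) is our condition (4). That theorem supplies a (possibly finer) weaker countably Hilbertian topology, which I would absorb into $\theta$, noting as in the proof of Theorem \ref{theoThighnessMeasures} that refining the topology preserves both the Radon property of the $\mu_{n}$ and the fact that the $Y^{n}$ remain $D_{\infty}((\widetilde{\Phi_{\theta}})'_{\beta})$-valued, together with a limit measure $\mu$; its proof (through Theorem \ref{theoUniformTightnessInfty}) yields the uniform tightness of $(\mu_{n})$ on $D_{\infty}((\widetilde{\Phi_{\theta}})'_{\beta})$, which is (b), while $\mu_{n} \Rightarrow \mu$ is (c). Taking $Y$ to be a random variable with law $\mu$ (for instance the canonical one of Corollary \ref{coroCanonicalRV}) completes the construction. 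For the final clause I would push everything forward along the continuous inclusion $D_{\infty}((\widetilde{\Phi_{\theta}})'_{\beta}) \hookrightarrow D_{\infty}(\Phi'_{\beta})$ (Proposition \ref{propProperSkoroSpaceWeakerCHT}(3)): continuous images of compacta are compact, so tightness transfers, and $f \mapsto f \circ j$ sends $C_{b}(D_{\infty}(\Phi'_{\beta}))$ into $C_{b}(D_{\infty}((\widetilde{\Phi_{\theta}})'_{\beta}))$, so weak convergence transfers.

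The main obstacle I anticipate is the extraction of a \emph{single} topology $\theta$ serving simultaneously the regularization of every $X^{n}$, the uniform tightness, and the weak convergence; this is precisely where the uniformity in $n$ built into condition (2) is indispensable, and where one must check that merging (by refinement) the countably Hilbertian topologies arising from the distinct steps neither destroys the indistinguishability in (a) nor removes the $Y^{n}$ from the relevant Skorokhod space.
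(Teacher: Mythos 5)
Your proposal is correct and follows essentially the same route as the paper: regularize each $X^{n}$ via Theorem \ref{theoRegulaTheoSkorokSpace} to get $Y^{n}$ satisfying (a), identify the laws $\mu_{n}$ and their space-time projections with those of $X^{n}$, verify the hypotheses of Theorem \ref{theoWeakConveMeasures}, and obtain $\theta$, $\mu$ and $Y$ (via Corollary \ref{coroCanonicalRV}) from that theorem; the only cosmetic difference is that you extract the common topology $\theta$ up front (Proposition \ref{propFamilyMeasRadonInCHS}-style) whereas the paper lets Theorem \ref{theoWeakConveMeasures} supply it. One small correction: passing from condition (2) to equicontinuity of the Fourier transforms uses the elementary bound $\abs{1-\widehat{\mu}_{n,t}(\phi)} \leq \sup_{\abs{r}\leq \delta}\abs{1-e^{ir}} + 2\,\Prob(\abs{X^{n}_{t}(\phi)}>\delta)$, which is the converse direction to the Kallenberg-type inequality quoted in the proof of Theorem \ref{theoMinlosSkorokhodSpace}.
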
 
\begin{prf}
First for each $n \in \N$, from \emph{(1)}, \emph{(2)} and Theorem \ref{theoRegulaTheoSkorokSpace}, there exists a $D_{\infty}(\Phi'_{\beta})$-valued random variable $Y^{n}$ such that for each $\phi \in \Phi$, $X^{n}(\phi)$ and $Y^{n}[\phi]$ are indistinguishable. 

For each $n \in \N$, let $\mu_{n}$ denote the probability distribution of $Y^{n}$ on $D_{\infty}(\Phi'_{\beta})$. Then, $\forall$ $m \in \N$, $\phi_{1}, \dots, \phi_{m} \in \Phi$, $t_{1}, \dots, t_{m} \geq 0$, it is clear that $\mu_{n} \circ (\Pi^{\phi_{1}, \dots, \phi_{m}}_{t_{1}, \dots, t_{m}})^{-1}$ is the probability distribution of  $ (X^{n}_{t_{1}}(\phi_{1}), \dots, X^{n}_{t_{m}}(\phi_{m}))$. In particular, for each $n \in \N$, the Fourier transform $\widehat{\mu}_{n,t}$ of $\mu_{n}$ at time $t$  is that of $X^{n}$ as a cylindrical random variable in $D_{\infty}(\Phi'_{\beta})$. 
Therefore, conditions \emph{(2)}, \emph{(3)} and \emph{(4)}, imply that the sequence $(\mu_{n}: n \in \N)$ satisfies the conditions in Theorem \ref{theoWeakConveMeasures}.  
This shows the existence of the a weaker countable Hilbertian topology $\theta$ on $\Phi$ and a  probability measure $\mu$ on $D_{\infty}((\widetilde{\Phi_{\theta}})'_{\beta})$ such that $\mu_{n} \Rightarrow \mu$ in $\goth{M}^{1}(D_{\infty}((\widetilde{\Phi_{\theta}})'_{\beta}))$. Hence each $Y^{n}$ is a $D_{\infty}((\widetilde{\Phi_{\theta}})'_{\beta})$-valued random variable and $Y$ is a $D_{\infty}((\widetilde{\Phi_{\theta}})'_{\beta})$-valued random variable whose probability distribution is $\mu$ (this is a consequence of L\'{e}vy's theorem and Corollary \ref{coroCanonicalRV}). Therefore $(a), (b), (c)$ are clearly satisfied. 
\end{prf}

In a similar way as for weak convergence of probability measures, under the assumption that the space $\Phi$ is ultrabornological and nuclear we can obtain a version of the above theorem with lower requirements on its assumptions. 

\begin{theo}  \label{theoWeakConvProcessesUltrab}
Let $\Phi$ be an ultrabornological nuclear space. For each $n \in \N$, let $X^{n}=\{X^{n}_{t} \}_{t \geq 0}$ be a $\Phi'_{\beta}$-valued c\`{a}dl\`{a}g process such that for each $t \geq 0$ the distribution of $X^{n}_{t}$ is a Radon measure on $\Phi'_{\beta}$. Suppose moreover that the sequence $(X^{n}: n \in \N)$ satisfies  (3) and (4) in Theorem \ref{theoWeakConvProcesses}. 
Then there exist a weaker countable Hilbertian topology $\theta$ on $\Phi$ and $(\widetilde{\Phi_{\theta}})'_{\beta}$-valued c\`{a}dl\`{a}g processes $Y= \{ Y_{t} \}_{t \geq 0}$ and $Y^{n}= \{ Y^{n}_{t} \}_{t \geq 0}$, $\forall \, n \in \N$, such that  
\begin{enumerate}[label=(\alph*)]
\item $\forall \, n \in \N$,  $X^{n}$ and $Y^{n}$ are indistinguishable, 
\item $Y$ and each $Y^{n}$ is a $D_{\infty}((\widetilde{\Phi_{\theta}})'_{\beta})$-valued random variable,
\item the sequence $( Y^{n}: n \in \N)$ is tight on $D_{\infty}((\widetilde{\Phi_{\theta}})'_{\beta})$, and
\item $Y^{n} \Rightarrow Y$ in $D_{\infty}((\widetilde{\Phi_{\theta}})'_{\beta})$.    
\end{enumerate}
Moreover, $(b), (c)$ are also satisfied for $Y$ and $(Y^{n}:n \in \N)$ as $D_{\infty}(\Phi'_{\beta})$-valued random variables. 
\end{theo}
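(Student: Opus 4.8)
The plan is to follow the proof of Theorem \ref{theoWeakConvProcesses} line by line, replacing at each step the appeals that rely on equicontinuity of the maps $X^{n}_{t}$ by the corresponding ultrabornological results, exactly as the remark preceding the statement suggests. Concretely, the regularization step will invoke Corollary \ref{coroConditionRandVariSkoSpaceUltrab} in place of Theorem \ref{theoRegulaTheoSkorokSpace}, and the weak-convergence step will invoke Theorem \ref{theoWeakConveMeasuresUltrab} in place of Theorem \ref{theoWeakConveMeasures}. No equicontinuity hypothesis on the $X^{n}_{t}$ is assumed because, for an ultrabornological nuclear $\Phi$, it is produced internally from the tightness of the one-dimensional projections via Proposition \ref{propSuffiCondTightOnUltrabor}.

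First I would regularize each process. Since $\Phi$ is ultrabornological and nuclear, each $X^{n}$ is a $\Phi'_{\beta}$-valued c\`{a}dl\`{a}g process whose marginals $X^{n}_{t}$ are Radon, so Corollary \ref{coroConditionRandVariSkoSpaceUltrab} applies (in its $D_{\infty}$ form, valid by the extensions recorded in Sect. \ref{sectionUNSPINFTY}) and yields, for each $n$, a c\`{a}dl\`{a}g process $Y^{n}$ with values in some $(\widetilde{\Phi_{\theta_{n}}})'_{\beta}$ that is a version of $X^{n}$ and is a genuine $D_{\infty}(\Phi'_{\beta})$-valued random variable. Because $X^{n}$ is itself $\Phi'_{\beta}$-valued and c\`{a}dl\`{a}g, $Y^{n}$ and $X^{n}$ are indistinguishable, which is conclusion (a). The purpose of this step is to upgrade $X^{n}$ from a map that is only $\mathcal{C}(D_{\infty}(\Phi'_{\beta}))$-measurable to one that is Borel measurable, so that its law $\mu_{n}$ is a well-defined Radon probability measure on $D_{\infty}(\Phi'_{\beta})$.

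Next I would record, just as in the proof of Theorem \ref{theoWeakConvProcesses}, that $\mu_{n} \circ (\Pi^{\phi_{1}, \dots, \phi_{m}}_{t_{1}, \dots, t_{m}})^{-1}$ is the law of $(X^{n}_{t_{1}}(\phi_{1}), \dots, X^{n}_{t_{m}}(\phi_{m}))$, and in particular $\mu_{n} \circ \Pi_{\phi}^{-1}$ is the law of $X^{n}(\phi)$ on $D_{\infty}(\R)$. Hypotheses (3) and (4) then say precisely that for each $\phi$ the family $(\mu_{n} \circ \Pi_{\phi}^{-1}: n \in \N)$ is uniformly tight on $D_{\infty}(\R)$ and that the finite-dimensional projections converge weakly; these are exactly the hypotheses of Theorem \ref{theoWeakConveMeasuresUltrab}. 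Invoking that theorem produces a weaker countably Hilbertian topology $\theta$ on $\Phi$ and a probability measure $\mu$ on $D_{\infty}((\widetilde{\Phi_{\theta}})'_{\beta})$ with $\mu_{n} \Rightarrow \mu$ both in $\goth{M}^{1}(D_{\infty}((\widetilde{\Phi_{\theta}})'_{\beta}))$ and in $\goth{M}^{1}(D_{\infty}(\Phi'_{\beta}))$.

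Finally I would translate this back to the processes. The application of Theorem \ref{theoWeakConveMeasuresUltrab} runs through Theorem \ref{theoUniformTightnessUltrabInfty}, so $(\mu_{n}: n \in \N)$ is uniformly tight on $D_{\infty}((\widetilde{\Phi_{\theta}})'_{\beta})$; hence each $\mu_{n}$ is Radon there and each $Y^{n}$ is a $D_{\infty}((\widetilde{\Phi_{\theta}})'_{\beta})$-valued random variable, giving (c) and half of (b). For the limit, the $D_{\infty}$-analogue of Corollary \ref{coroCanonicalRV} furnishes a $D_{\infty}((\widetilde{\Phi_{\theta}})'_{\beta})$-valued random variable $Y$ with law $\mu$, completing (b), and then $\mu_{n} \Rightarrow \mu$ is exactly $Y^{n} \Rightarrow Y$, which is (d). The concluding ``moreover'' follows because the canonical inclusion from $D_{\infty}((\widetilde{\Phi_{\theta}})'_{\beta})$ into $D_{\infty}(\Phi'_{\beta})$ is continuous, hence carries tight families to tight families and preserves weak convergence. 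The point requiring the most care, and the main obstacle, is the bookkeeping of topologies: the topologies $\theta_{n}$ produced process by process in the regularization step must be absorbed into the single $\theta$ produced by Theorem \ref{theoWeakConveMeasuresUltrab}, and one must check that the uniform tightness delivered by that theorem genuinely forces each individual $\mu_{n}$ to concentrate on the common space $D_{\infty}((\widetilde{\Phi_{\theta}})'_{\beta})$ rather than merely on its own $D_{\infty}((\widetilde{\Phi_{\theta_{n}}})'_{\beta})$. This is exactly the mechanism already present in the proof of Theorem \ref{theoThighnessMeasures}, where the Radon property on the weaker space is established first (through the analogue of Proposition \ref{propFamilyMeasRadonInCHS}) and is then preserved under the refinement of the topology, so no additional argument beyond citing the ultrabornological versions is needed.
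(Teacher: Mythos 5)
Your proposal is correct and follows exactly the route the paper takes: the paper's own proof is a one-line reduction citing Corollary \ref{coroConditionRandVariSkoSpaceUltrab} (for the regularization step, in place of Theorem \ref{theoRegulaTheoSkorokSpace}) and Theorem \ref{theoWeakConveMeasuresUltrab} (in place of Theorem \ref{theoWeakConveMeasures}), which is precisely the substitution you carry out. Your elaboration of the details, including the indistinguishability in (a) and the bookkeeping of the weaker countably Hilbertian topologies, is consistent with the arguments of Theorem \ref{theoWeakConvProcesses} that the paper asks the reader to replicate.
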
 
\begin{prf} 
The proof follows from very similar arguments to those used in Theorem \ref{theoWeakConvProcesses} from Corollary \ref{coroConditionRandVariSkoSpaceUltrab} and Theorem  \ref{theoWeakConveMeasuresUltrab}.  
\end{prf}

\section{Weak Convergence of L\'{e}vy Processes in the Skorokhod Space}\label{sectionWeakConvLevy}

\begin{assu}
For this section, $\Phi$ always denote a barrelled nuclear space.
\end{assu}

In this section we will provide sufficient conditions for a sequence of $\Phi'_{\beta}$-valued L\'{e}vy processes to converge in $D_{\infty}(\Phi'_{\beta})$. We start by recalling some basic properties of L\'{e}vy processes taking values in $\Phi'_{\beta}$. For further details see \cite{FonsecaMora:Levy}.  

 A $\Phi'_{\beta}$-valued process $L=\left\{ L_{t} \right\}_{t\geq 0}$ is called a \emph{L\'{e}vy process} if \begin{inparaenum}[(i)] \item  $L_{0}=0$ a.s., 
\item $L$ has \emph{independent increments}, i.e. for any $n \in \N$, $0 \leq t_{1}< t_{2} < \dots < t_{n} < \infty$ the $\Phi'_{\beta}$-valued random variables $L_{t_{1}},L_{t_{2}}-L_{t_{1}}, \dots, L_{t_{n}}-L_{t_{n-1}}$ are independent,  
\item L has \emph{stationary increments}, i.e. for any $0 \leq s \leq t$, $L_{t}-L_{s}$ and $L_{t-s}$ are identically distributed, and  
\item for every $t \geq 0$ the distribution $\mu_{t}$ of $L_{t}$ is a Radon measure and the mapping $t \mapsto \mu_{t}$ from $\R_{+}$ into the space $\goth{M}_{R}^{1}(\Phi'_{\beta})$ of Radon probability measures on $\Phi'_{\beta}$ is continuous at $0$ when $\goth{M}_{R}^{1}(\Phi'_{\beta})$  is equipped with the weak topology. \end{inparaenum}

Every $\Phi'_{\beta}$-valued L\'{e}vy process $L=\left\{ L_{t} \right\}_{t\geq 0}$ has a regular, c\`{a}dl\`{a}g version $\tilde{L}=\{ \tilde{L}_{t} \}_{t \geq 0}$ that is also a L\'{e}vy process. Moreover, there exists a weaker countably Hilbertian topology $\vartheta_{L}$ on $\Phi$ such that $\tilde{L}$ is a $(\widetilde{\Phi_{\vartheta_{L}}})'_{\beta}$-valued c\`{a}dl\`{a}g process (see \cite{FonsecaMora:Levy}, Corollary 3.11). Therefore, $L$ can be identified with a  $D_{\infty}(\Phi'_{\beta})$-valued random variable whose probability distribution is a Radon measure on $D_{\infty}(\Phi'_{\beta})$ (see the proof of Theorem \ref{theoRegulaTheoSkorokSpace}). 

Recall that a Borel measure $\nu$ on $\Phi'_{\beta}$ is a L\'{e}vy measure (see \cite{FonsecaMora:Levy}) if it satisfies: 
\begin{enumerate}
\item $\nu (\{ 0 \})=0$, 
\item for each neighborhood of zero $U \subseteq \Phi'_{\beta}$, the  restriction $\restr{\nu}{U^{c}}$ of $\nu$ on the set $U^{c}$ belongs to the space $\goth{M}^{b}_{R}(\Phi'_{\beta})$ of bounded Radon measures on $\Phi'_{\beta}$,    
\item there exists a continuous Hilbertian seminorm $\rho$ on $\Phi$ such that 
\begin{equation} \label{integrabilityPropertyLevyMeasure}
\int_{B_{\rho'}(1)} \rho'(f)^{2} \nu (df) < \infty,  \quad \mbox{and} \quad  \restr{\nu}{B_{\rho'}(1)^{c}} \in \goth{M}^{b}_{R}(\Phi'_{\beta}). 
\end{equation}
\end{enumerate}

One of the most important properties of a $\Phi'_{\beta}$-valued L\'{e}vy process $L=\left\{ L_{t} \right\}_{t\geq 0}$ is the L\'{e}vy-Khintchine formula for its Fourier transform  (\cite{FonsecaMora:Levy}, Theorem 4.18): for each $t \geq 0$, $\phi \in \Phi$, 
\begin{equation} \label{levyKhintchineFormulaLevyProcess}
\begin{split}
& \Exp \left( e^{i L_{t}[\phi] } \right) = e^{t \eta(\phi)}, \quad  \mbox{ with} \\ 
& \eta(\phi)= i \goth{m}[\phi] - \frac{1}{2} \mathcal{Q}(\phi)^{2} + \int_{\Phi'_{\beta}} \left( e^{i f[\phi]} -1 - i f[\phi] \ind{ B_{\rho'}(1)}{f} \right) \nu(d f).  
\end{split}
\end{equation}
where $\goth{m} \in \Phi'_{\beta}$,  $\mathcal{Q}$ is a continuous Hilbertian seminorm on $\Phi$, $\nu$ is a L\'{e}vy measure on $\Phi'_{\beta}$ and $\rho$ is a continuous Hilbertian seminorm  on $\Phi$ for which $\nu$ satisfies \eqref{integrabilityPropertyLevyMeasure}.
%Furthermore, because $\nu$  is a L\'{e}vy measure on $\Phi'_{\beta}$ it follows that $\nu$ is a $\sigma$-finite Radon measure (see \cite{FonsecaMora:Levy}, Proposition 4.9).

Our main result on convergence of L\'{e}vy processes is the following:

\begin{theo}\label{theoWeakConvLevyProcess}
For every $n \in \N$, let $L^{n}=\left\{ L^{n}_{t} \right\}_{t\geq 0}$ be a $\Phi'_{\beta}$-valued c\`{a}dl\`{a}g L\'{e}vy process where $(\goth{m}_{n}, \mathcal{Q}_{n}, \nu_{n}, \rho_{n})$ are as in \eqref{levyKhintchineFormulaLevyProcess}. Assume that there exists a continuous Hilbertian seminorm $q$ on $\Phi$ such that $\mathcal{Q}_{n} \leq q$ and $\rho_{n} \leq q$ $\forall n \in \N$, and such that the following is satisfied:
\begin{enumerate}
\item \label{condDrift} $(\goth{m}_{n}: n \in \N)$ is relatively compact in $\Phi'_{\beta}$,
\item \label{condGaussCova}  $\displaystyle{\sup_{n \in \N} \norm{i_{\mathcal{Q}_{n},q}}_{\mathcal{L}_{2}(\Phi_{q}, \Phi_{\mathcal{Q}_{n}})} < \infty}$, 
\item \label{condLevyMea} $\displaystyle{\sup_{n \in \N} \int_{\Phi'_{\beta}}(q'(f)^{2} \wedge 1) \nu_{n}(df)<\infty. }$
\end{enumerate}
Suppose moreover that  $\forall \, m \in \N$, $\phi_{1}, \dots, \phi_{m} \in \Phi$, $t_{1}, \dots, t_{m} \in [0,T]$, the sequence of distributions of 
$ (L^{n}_{t_{1}}(\phi_{1}), \dots, L^{n}_{t_{m}}(\phi_{m}))$ converges in distribution to some probability measure on $\R^{m}$. Then the sequence $( L^{n}: n \in \N)$ is uniformly tight on $D_{\infty}(\Phi'_{\beta})$ and there exists a $\Phi'_{\beta}$-valued L\'{e}vy process $L=\left\{ L_{t} \right\}_{t\geq 0}$ such that $L^{n} \Rightarrow L$ in $D_{\infty}(\Phi'_{\beta})$.
\end{theo}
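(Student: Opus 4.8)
The plan is to apply the \Levy{} continuity theorem on $D_{\infty}(\Phi'_{\beta})$ (Theorem \ref{theoWeakConveMeasures}) to the sequence $(\mu_{n}:n\in\N)$ of laws of the $L^{n}$. These are Radon probability measures on $D_{\infty}(\Phi'_{\beta})$, since every $\Phi'_{\beta}$-valued \Levy{} process is identified with such a random variable (as recalled just before \eqref{levyKhintchineFormulaLevyProcess}). Hypothesis (3) of Theorem \ref{theoWeakConveMeasures} — convergence of the finite dimensional distributions — is exactly the last assumption of the present statement, so it remains to verify (1) the equicontinuity at zero of $(\widehat{\mu}_{n,t}:t\in[0,T],\,n\in\N)$ for every $T>0$, and (2) the uniform tightness on $D_{\infty}(\R)$ of $(\mu_{n}\circ\Pi_{\phi}^{-1}:n\in\N)$ for each $\phi\in\Phi$. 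Once these hold, Theorem \ref{theoWeakConveMeasures} (whose proof runs through the tightness result Theorem \ref{theoUniformTightnessInfty}) delivers simultaneously the uniform tightness of $(\mu_{n})$ and the existence of a limit $\mu$ with $\mu_{n}\Rightarrow\mu$, first on some $D_{\infty}((\widetilde{\Phi_{\theta}})'_{\beta})$ and then on $D_{\infty}(\Phi'_{\beta})$.

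For (2) I would observe that $\mu_{n}\circ\Pi_{\phi}^{-1}$ is the law in $D_{\infty}(\R)$ of the real-valued \Levy{} process $L^{n}[\phi]=\{L^{n}_{t}[\phi]\}_{t\ge0}$, whose characteristic exponent is $s\mapsto\eta_{n}(s\phi)$; since the time marginals of these real \Levy{} processes converge by hypothesis, and for real \Levy{} processes convergence of the time marginals is classically equivalent to functional convergence in $D_{\infty}(\R)$, the family $(\mu_{n}\circ\Pi_{\phi}^{-1}:n\in\N)$ is uniformly tight. For (1), the \Levy--Khintchine formula \eqref{levyKhintchineFormulaLevyProcess} gives $\widehat{\mu}_{n,t}(\phi)=e^{t\eta_{n}(\phi)}$ with $\mathrm{Re}\,\eta_{n}(\phi)\le0$, whence $\abs{1-\widehat{\mu}_{n,t}(\phi)}\le t\abs{\eta_{n}(\phi)}\le T\abs{\eta_{n}(\phi)}$ for $t\in[0,T]$, and it suffices to dominate $\sup_{n}\abs{\eta_{n}(\phi)}$ by a continuous seminorm on $\Phi$ that vanishes as $\phi\to0$. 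The drift is controlled by \ref{condDrift}, because a relatively compact subset of the barrelled space $\Phi'_{\beta}$ is equicontinuous, so $\sup_{n}\abs{\goth{m}_{n}[\phi]}\le p_{1}(\phi)$ for some continuous seminorm $p_{1}$; the Gaussian part obeys $\frac12\mathcal{Q}_{n}(\phi)^{2}\le\frac12 q(\phi)^{2}$ since $\mathcal{Q}_{n}\le q$, with \ref{condGaussCova} securing the uniformity in $n$ needed to match the continuous part to the common seminorm $q$; and the small jumps, integrated over $B_{q'}(1)$, are bounded of order $q(\phi)^{2}$ by a second order Taylor estimate combined with $\abs{f[\phi]}\le q'(f)q(\phi)$ and \ref{condLevyMea}.

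The hard part will be the large jumps, that is the integral over $\{f:q'(f)>1\}$, together with the reconciliation of the $n$-dependent truncation seminorm $\rho_{n}$ with the fixed $q$. Since $\rho_{n}\le q$ one only gets $B_{\rho_{n}'}(1)\subseteq B_{q'}(1)$, so an annulus $B_{q'}(1)\setminus B_{\rho_{n}'}(1)$ appears on which the linear term is not subtracted; on this annulus and on $\{q'>1\}$ the integrand is merely bounded by $2$, and its smallness as $\phi\to0$ is only pointwise in $f$. To make it uniform in $n$ one needs the family $(\restr{\nu_{n}}{\{q'>1\}}:n\in\N)$ of bounded Radon measures to be uniformly tight, so that no jump mass escapes to infinity in $\Phi'_{\beta}$; this uniform tightness is supplied by combining \ref{condLevyMea} (which pins the total large-jump mass and the second moment of the small jumps to the single seminorm $q$) with the assumed convergence of the finite dimensional distributions, via the classical description of convergence of infinitely divisible laws through convergence of their \Levy--Khintchine triplets. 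I expect the genuinely delicate step to be the bookkeeping of the annulus: re-centring the truncation at $B_{q'}(1)$ and absorbing the resulting linear correction into a modified drift that remains equicontinuous in $n$.

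Finally, with $\mu_{n}\Rightarrow\mu$ on $D_{\infty}(\Phi'_{\beta})$ in hand, I would identify $\mu$ as the law of a regular $\Phi'_{\beta}$-valued \cadlag{} process $L$ carried by some $D_{\infty}((\widetilde{\Phi_{\theta}})'_{\beta})$ (as in Theorem \ref{theoRegulaTheoSkorokSpace}) and verify that $L$ is a \Levy{} process: $L_{0}=0$, while the independence and stationarity of increments pass to the weak limit because they are properties of the finite dimensional distributions, which converge by hypothesis. The limiting exponent $\eta=\lim_{n}\eta_{n}$ then yields the \Levy--Khintchine representation \eqref{levyKhintchineFormulaLevyProcess} for $L$ and the continuity at $0$ of $t\mapsto\mu_{t}$, so that $L$ is the desired \Levy{} process with $L^{n}\Rightarrow L$ in $D_{\infty}(\Phi'_{\beta})$, the uniform tightness of $(L^{n}:n\in\N)$ having been obtained along the way.
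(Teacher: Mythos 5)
Your overall architecture (reduce to Theorem \ref{theoWeakConveMeasures}, get tightness of the one-dimensional projections from the classical fact that real-valued \Levy{} processes converge in $D_{\infty}(\R)$ as soon as their time marginals converge, and identify the limit as a \Levy{} process from its finite-dimensional distributions) matches the paper. The genuine gap is in your verification of the equicontinuity condition (1), which you attempt by estimating $\sup_{n}\abs{\eta_{n}(\phi)}$ term by term from \eqref{levyKhintchineFormulaLevyProcess}. Two pieces of that estimate do not close under the stated hypotheses. First, on $\{f: q'(f)>1\}$ the integrand is only bounded by $2$ and tends to $0$ pointwise in $f$; to make this uniform in $n$ you correctly observe that you need uniform tightness of the family $(\restr{\nu_{n}}{\{q'>1\}}: n \in \N)$ in $\Phi'_{\beta}$, but the justification you offer --- convergence of the finite-dimensional distributions plus ``convergence of triplets'' --- only controls the one-dimensional projections $\nu_{n}\circ\pi_{\phi}^{-1}$; passing from one-dimensional control to tightness of the L\'{e}vy measures in the infinite-dimensional space $\Phi'_{\beta}$ is precisely the crux of the theorem and cannot be waved through. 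Second, the annulus $B_{q'}(1)\setminus B_{\rho_{n}'}(1)$ is worse than you suggest: its $\nu_{n}$-mass is \emph{not} uniformly bounded by condition \ref{condLevyMea} (take $\rho_{n}=q/n$, so the annulus is $\{1/n < q' \leq 1\}$ and its mass can grow like $n^{2}$), so the proposed re-centring of the truncation and absorption of the linear correction into a modified equicontinuous drift cannot be carried out by these elementary estimates.

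The paper avoids both problems by not estimating $\eta_{n}$ at all: it observes that $(\mu_{n}\circ\Pi_{1}^{-1}: n\in\N)$ is a family of infinitely divisible Radon measures on $\Phi'_{\beta}$ and invokes Satz 2.8 of Dettweiler \cite{Dettweiler:1976}, which says exactly that conditions \ref{condDrift}, \ref{condGaussCova}, \ref{condLevyMea} force this family to be uniformly tight on $\Phi'_{\beta}$; since $\Phi$ is barrelled, uniform tightness yields equicontinuity at zero of $(\widehat{\mu}_{n,1}: n\in\N)$ by \cite{DaleckyFomin}, Theorem III.2.7, and then $\widehat{\mu}_{n,t}=e^{t\eta_{n}}$ gives equicontinuity of the whole family $(\widehat{\mu}_{n,t}: t\in[0,T],\, n\in\N)$. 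This is the external input your argument is missing; without it (or an equivalent infinite-dimensional tightness result for infinitely divisible laws) the equicontinuity step does not go through.
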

\begin{prf}
For each $n\in \N$, let $\mu_{n}$ be the distribution of $L^{n}$ as a random variable in $D_{\infty}(\Phi'_{\beta})$. Then for each $t \geq 0$, the Fourier transform $\widehat{\mu}_{n,t}$ of $\mu_{n} \circ \Pi^{-1}_{t}$ is precisely the Fourier transform of $L^{n}_{t}$. Hence if for each $n \in \N$, $\eta_{n}$ is defined by \eqref{levyKhintchineFormulaLevyProcess},  then we have that 
$\widehat{\mu}_{n,t}(\phi)= e^{t \eta_{n}(\phi)}$ for each $t \geq 0$ and $\phi \in \Phi$. Then, in order to show that for every $T>0$ we have that $(\widehat{\mu}_{n,t}: t \in [0,T], n \in \N)$ is equicontinuous at zero, it is enough to show that $(\widehat{\mu}_{n,1}: n \in \N)$ is equicontinuous at zero. 

Now, because the family of measures $(\mu_{n} \circ \Pi^{-1}_{1}: n \in \N)$ is infinitely divisible (they correspond to the distributions of the sequence $(L^{n}_{1}: n \in \N)$; see \cite{FonsecaMora:Levy}, Theorem 3.5), then it follows from Satz 2.8 in \cite{Dettweiler:1976} that conditions \ref{condDrift}, \ref{condGaussCova}, \ref{condLevyMea} imply that the family  $(\mu_{n} \circ \Pi^{-1}_{1}: n \in \N)$ is uniformly tight on $\Phi'_{\beta}$. But as $\Phi$ is a barrelled space, then the above implies that $(\widehat{\mu}_{n,1}: n \in \N)$ is equicontinuous at zero (see \cite{DaleckyFomin}, Theorem III.2.7, p.104). Hence, $(\widehat{\mu}_{n,t}: t \in [0,T], n \in \N)$ is equicontinuous at zero for each $T>0$. 

Furthermore for every $t \geq 0$ and $\phi \in \Phi$ the sequence $(L^{n}_{t}[\phi])$ converges weakly and because $L^{n}[\phi]=(L^{n}_{t}[\phi])_{t \geq 0}$ is a real-valued c\`{a}dl\`{a}g L\'{e}vy process for each $n \in \N$, then for each $\phi \in \Phi$ the sequence $(L^{n}[\phi]: n \in \N)$ converges weakly in $D_{\infty}(\R)$ (see \cite{AndersenEtAl:2015}, Proposition 12.4). Therefore by Prokhorov theorem $(L^{n}[\phi]: n \in \N)$ is uniformly tight on $D_{\infty}(\R)$ and hence $(\mu_{n} \circ \Pi^{-1}_{\phi}: n \in \N)$ is uniformly tight on $D_{\infty}(\R)$. 

Then we have that all the conditions in Theorem \ref{theoWeakConvProcesses} are satisfied, and so we have that $( L^{n}: n \in \N)$ is uniformly tight on $D_{\infty}(\Phi'_{\beta})$ and also the existence of a $\Phi'_{\beta}$-valued c\`{a}dl\`{a}g process $L=\left\{ L_{t} \right\}_{t\geq 0}$ such that $L^{n} \Rightarrow L$ in $D_{\infty}(\Phi'_{\beta})$. Finally because $\forall \, m \in \N$, $\phi_{1}, \dots, \phi_{m} \in \Phi$, $t_{1}, \dots, t_{m} \in [0,T]$, $ (L^{n}_{t_{1}}(\phi_{1}), \dots, L^{n}_{t_{m}}(\phi_{m}))$ converges in distribution to 
$ (L_{t_{1}}(\phi_{1}), \dots, L_{t_{m}}(\phi_{m}))$, then it follows that $L$ is a cylindrical L\'{e}vy process. But then, $L$ is a $\Phi'_{\beta}$-valued L\'{e}vy process by Theorem 3.8 in \cite{FonsecaMora:Levy}. 
\end{prf}

\section{Tightness on the Skorokhod space of a Locally Convex Space}
\label{sectionApplSkoLocalConv}

% (definitions are analogue to those given in Sections \ref{sectionSkoSpac} and \ref{sectionUNSPINFTY})
\begin{assu}
For this section, $(\Phi, \tau)$ always denote a (Hausdorff) locally convex space.
\end{assu}

In this section we will show how the machinery developed on the last sections for the case of the dual of a nuclear space can be applied to study uniform tightness of probability  measures on $D_{\infty}(\Phi'_{\beta})$. This can be done through the use of the Sazonov topology whose definition will be recalled for the convenience of the reader. 
For further details the reader is referred to \cite{BogachevMT, SchwartzRM, SmolyanovFomin}. 

Let $\mathscr{P}(\Phi, \tau)$ denote the collection of all the seminorms on $(\Phi, \tau)$ defined in the following way: $p \in  \mathscr{P}(\Phi, \tau)$ if and only if $p$ is a continuous Hilbertian seminorm on $\Phi$ for which there exists a continuous Hilbertian seminorm $q$ on $\Phi$ such that $p \leq q$, $\Phi_{q}$ is separable, and such that the canonical inclusion $i_{p,q}: \Phi_{q} \rightarrow \Phi_{p}$ is Hilbert-Schmidt. The collection $\mathscr{P}(\Phi, \tau)$ is not-empty as every seminorm on $\Phi$ continuous with respect to the weak topology $\sigma$ is a member of $\mathscr{P}(\Phi, \tau)$.  

The locally convex topology on $\Phi$ generated by the family of seminorms $\mathscr{P}(\Phi, \tau)$ is called the \emph{Sazonov topology} on $\Phi$ with respect to the topology $\tau$ and is denote by $\tau_{S}$. Considering finite dimensional subspaces of $\Phi$ as Hilbert spaces, it is clear  that $\sigma$ is weaker than $\tau_{S}$. On the other hand, each $p \in  \mathscr{P}(\Phi, \tau)$ is a continuous Hilbertian seminorm on $\Phi$ and therefore we have that $\tau_{S}$ is weaker than $\tau$. Moreover, $\tau_{S} = \tau$ if and only if $(\Phi, \tau)$ is a nuclear space.

\begin{theo}\label{theoUniformTightnessInftylocalConvex} 
Let $(\mu_{\alpha}: \alpha \in A)$ be a family of probability measures on $D_{\infty}(\Phi'_{\beta})$ that satisfies the following conditions:
\begin{enumerate}
\item  For all $T>0$, the family of Fourier transforms $(\widehat{\mu}_{\alpha,t}: t \in [0,T], \alpha \in A)$ is equicontinuous at zero on $(\Phi,\tau_{S})$.
\item  For each  $\phi \in \Phi$, the family $(\mu_{\alpha} \circ \Pi_{\phi}^{-1}: \alpha \in A)$ of probability measures on $D_{\infty}(\R)$ is uniformly tight.    
\end{enumerate}  
Then there exists a weaker countably Hilbertian topology $\theta$ on $\Phi$ such that $(\mu_{\alpha}: \alpha \in A)$ is uniformly tight on $D_{\infty}((\widetilde{\Phi_{\theta}})'_{\beta})$. In particular, the family $(\mu_{\alpha}: \alpha \in A)$ is uniformly tight on $D_{\infty}(\Phi'_{\beta})$. 
\end{theo}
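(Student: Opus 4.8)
\emph{The plan is to re-run the proof of Theorem \ref{theoUniformTightnessInfty}, and hence of its finite-horizon engine Theorem \ref{theoThighnessMeasures}, replacing every appeal to nuclearity of $\Phi$ by an appeal to the defining property of the Sazonov topology.} First I would reduce to the finite horizon exactly as in Theorem \ref{theoUniformTightnessInfty}: via the truncation maps $r_{n}$ and the representation of a compact set as $\bigcap_{n}r_{n}^{-1}(\mathcal{K}_{n})$ it suffices, for each $T=n$ and each $\epsilon>0$, to produce a weaker countably Hilbertian topology and a compact subset of $D_{T}((\widetilde{\Phi_{\theta}})'_{\beta})$ of $\mu_{\alpha}$-mass $\ge 1-\epsilon$ uniformly in $\alpha$, and then to diagonalise over the countably many pairs $(n,\epsilon)$. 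So everything rests on a Sazonov analogue of Theorem \ref{theoThighnessMeasures}.

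For the finite-horizon statement I would follow the three stages of the proof of Theorem \ref{theoThighnessMeasures}. The key point is that hypothesis (1) is equicontinuity relative to $\tau_{S}$, so the seminorm $p$ controlling the Fourier transforms may be taken in $\mathscr{P}(\Phi,\tau)$; by the very definition of $\mathscr{P}(\Phi,\tau)$ it then has a continuous Hilbertian companion $\varrho\ge p$ with $\Phi_{\varrho}$ separable and $i_{p,\varrho}$ Hilbert--Schmidt. This single Hilbert--Schmidt embedding is exactly what nuclearity supplied before, and it is radonifying; consequently the argument of Proposition \ref{propFamilyMeasRadonInCHS} — run through the canonical cylindrical random variable of Theorem \ref{theoExistCanoCylRandVariable} and the Kallenberg inequality of Theorem \ref{theoMinlosSkorokhodSpace} — goes through with $\Phi$ merely locally convex and yields a weaker countably Hilbertian topology $\theta$, generated by the Sazonov seminorms so obtained, for which each $\mu_{\alpha}$ is Radon on $D_{T}((\widetilde{\Phi_{\theta}})'_{\beta})$. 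Since $\widetilde{\Phi_{\theta}}$ is automatically ultrabornological, Proposition \ref{propSuffiCondTightOnUltrabor} applies verbatim and, together with hypothesis (2), upgrades the pointwise control into the integrated bound $\sup_{\alpha}\int\sup_{t\in[0,T]}\abs{1-e^{ix(t)[\phi]}}\,d\mu_{\alpha}\le\epsilon$ on a ball $B_{p}(1)$.

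The only place a genuinely second, coarser embedding is needed is the Mitoma estimate together with the spatial compact-containment step, where one uses both a Hilbert--Schmidt embedding (to make $\sum_{k}p(\phi^{q}_{k})^{2}=\norm{i_{p,q}}^{2}_{\mathcal{L}_{2}(\Phi_{q},\Phi_{p})}<\infty$) and a strictly coarser \emph{compact} embedding (to turn the resulting ball of a dual Hilbert space into a compact subset of a larger one). Because $\tau_{S}$ is not nuclear I cannot simply manufacture a compact companion of $\varrho$; instead I would insert one by factoring the single Hilbert--Schmidt embedding already in hand. The factorisation lemma I would prove is: any Hilbert--Schmidt $i_{p,\varrho}$ between separable spaces splits as $i_{p,\varrho}=i_{p,q}\circ i_{q,\varrho}$ with $p\le q\le\varrho$, $i_{p,q}$ Hilbert--Schmidt and $i_{q,\varrho}$ compact. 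Concretely, if $(\sigma_{k})_{k}\in\ell^{2}$ are the singular values of $i_{p,\varrho}$ one defines the intermediate Hilbertian seminorm through the weights $\tau_{k}=a_{k}^{1/2}$ with $a_{k}=(\sum_{m\ge k}\sigma_{m}^{2})^{1/2}$; then $\sigma_{k}\le\tau_{k}\le 1$, $\tau_{k}\to 0$ (giving compactness of $i_{q,\varrho}$) and $\sum_{k}(\sigma_{k}/\tau_{k})^{2}=\sum_{k}\sigma_{k}^{2}/a_{k}\le 2a_{1}<\infty$ (giving the Hilbert--Schmidt property of $i_{p,q}$). Using $q$ in the Mitoma estimate and $\varrho$ for the containment, the remaining steps — choosing $C$, passing to the compact image $K$ of the ball under $i'_{q,\varrho}$, intersecting with cylinders over a sequence separating $\Phi'_{\varrho}$, and invoking Jakubowski's compactness criterion (Lemma 3.3 in \cite{Jakubowski:1986}) — are identical to Theorem \ref{theoThighnessMeasures} and deliver the desired compact set inside $D_{T}((\widetilde{\Phi_{\theta}})'_{\beta})$.

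\emph{The main obstacle is precisely the non-nuclearity of $\tau_{S}$:} one Sazonov seminorm comes with a single Hilbert--Schmidt companion, whereas the nuclear proof tacitly used an unlimited supply, and in particular a compact embedding sitting above the Hilbert--Schmidt one; the factorisation above is what replaces that supply. A secondary point to handle with care is the bookkeeping of topologies. The seminorm $p$ returned by Proposition \ref{propSuffiCondTightOnUltrabor} is only $\theta$-continuous and uncontrolled, so its Sazonov companions $q,\varrho$ need not themselves be $\theta$-continuous; I would therefore carry out all integral estimates on the coarser space $D_{T}((\widetilde{\Phi_{\theta}})'_{\beta})$, where $\mu_{\alpha}$ is Radon and the test functions $\phi^{q}_{k},\varphi^{\varrho}_{j}\in\Phi$ still pair with the paths, while reading off the conclusions in the finer space generated by adjoining $q$ and $\varrho$, and finally accumulate all seminorms arising over the countably many $(n,\epsilon)$ into one weaker countably Hilbertian topology. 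Uniform tightness on the resulting $D_{\infty}((\widetilde{\Phi_{\theta}})'_{\beta})$, and thence on $D_{\infty}(\Phi'_{\beta})$ by continuity of the inclusion, follows as before.
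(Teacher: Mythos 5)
Your proposal is correct and follows essentially the same route as the paper: reduce to the finite horizon as in Theorem \ref{theoUniformTightnessInfty}, observe that the Sazonov hypothesis lets the seminorm $p$ in \eqref{inequaSupCharaFunct} be taken $\tau_{S}$-continuous, and then factor the resulting Hilbert--Schmidt inclusion as a Hilbert--Schmidt map followed by a compact one so that the Mitoma estimate and the compact-containment step of Theorem \ref{theoThighnessMeasures} go through unchanged. The only difference is that you prove the factorisation explicitly via the singular values, whereas the paper obtains it by citing Proposition II.3.6 of \cite{SchwartzRM}.
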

\begin{prf}
The proof of this theorem can be done from a modification of the arguments used in the proofs of Theorems \ref{theoThighnessMeasures} and  \ref{theoUniformTightnessInfty}. For the benefit of the reader we will sketch the main steps. First, the regularization theorem (Theorem \ref{theoRegularizationTheoremCadlagContinuousVersion}) remains valid if we assume equicontinuity with respect to the Sazonov topology $\tau_{S}$ (see \cite{FonsecaMora:ReguCyliLCS}). Then the regularization theorem and the Minlos theorem on Skorokhod space (Theorems \ref{theoRegulaTheoSkorokSpace} and \ref{theoMinlosSkorokhodSpace}) (and therefore Proposition \ref{propFamilyMeasRadonInCHS}) remain valid if we assume equicontinuity with respect to $\tau_{S}$. In a similar way, Theorem \ref{theoThighnessMeasures} can be proved if we assume equicontinuity of the Fourier transforms with respect to the Sazonov topology $\tau_{S}$. In effect, it follows from our assumptions that in \eqref{inequaSupCharaFunct} we can choose the continuous Hilbertian seminorm $p$ to be $\tau_{S}$-continuous. Therefore, from the definition of the topology $\tau_{S}$ and since every Hilbert-Schmidt operator can be factored into the composition of a Hilbert-Schmidt operator and a compact operator (see \cite{SchwartzRM}, Proposition II.3.6, p.217), for the  $\tau_{S}$-continuous seminorm $p$ one can find two continuous Hilbertian seminorms $q$ and $\varrho$ on $\Phi$ such that the canonical inclusions $i_{p,q}: \Phi_{q} \rightarrow \Phi_{p}$ and $i_{q,\varrho}: \Phi_{\varrho} \rightarrow \Phi_{q}$ are respectively Hilbert-Schmidt and compact. The proof of Theorem \ref{theoThighnessMeasures} can now be replicated  with almost no changes. Finally, in the proof of Theorem \ref{theoUniformTightnessInfty} we only used the corresponding result from Theorem \ref{theoThighnessMeasures}, but since the conclusions of the latter theorem remains valid with the assumption of equicontinuity with respect to the Sazonov topology $\tau_{S}$, the conclusions of Theorem \ref{theoUniformTightnessInfty} are valid as well. In the above result we conclude tightness  of the family $(\mu_{\alpha}: \alpha \in A)$ on $D_{\infty}((\Phi,\tau_{S})'_{\beta})$, but since 
the inclusion from $D_{\infty}((\Phi,\tau_{S})'_{\beta})$ into 
$D_{\infty}(\Phi'_{\beta})$ is continuous (this as the topology  on $(\Phi,\tau_{S})'_{\beta}$ is finer that the induced topology from $\Phi'_{\beta}$) we conclude tightness of $(\mu_{\alpha}: \alpha \in A)$ on $D_{\infty}(\Phi'_{\beta})$. 
\end{prf}

In a similar way, modifiying the arguments in the proof of Theorem \ref{theoWeakConveMeasures} by using Theorem \ref{theoUniformTightnessInftylocalConvex} we can prove the following: 

\begin{theo} \label{theoWeakConveMeasureslocallyConvex}
Let $(\mu_{n}: n \in \N)$ be a sequence of probability measures on $D_{\infty}(\Phi'_{\beta})$ that satisfies \emph{(1)} and \emph{(2)} of Theorem \ref{theoUniformTightnessInftylocalConvex}, and such that for all $m \in \N$, $\phi_{1}, \dots, \phi_{m} \in \Phi$, $t_{1}, \dots, t_{m} \geq 0$, the sequence $ \mu_{n} \circ \left( \Pi^{\phi_{1}, \dots, \phi_{m}}_{t_{1}, \dots, t_{m}} \right)^{-1}$ converges weakly on $\R^{m}$.
Then, $\mu_{n} \Rightarrow \mu$ in $\goth{M}^{1}(D_{\infty}(\Phi'_{\beta}))$. 
\end{theo}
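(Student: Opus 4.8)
The plan is to replicate the proof of Theorem \ref{theoWeakConveMeasures} line by line, with Theorem \ref{theoUniformTightnessInftylocalConvex} playing the role that Theorem \ref{theoUniformTightnessInfty} played in the nuclear case. Since hypotheses \emph{(1)} and \emph{(2)} here are exactly those of Theorem \ref{theoUniformTightnessInftylocalConvex}, the first step would be to invoke that theorem to obtain a weaker countably Hilbertian topology $\theta$ on $\Phi$ for which the sequence $(\mu_{n}: n \in \N)$ is uniformly tight on $D_{\infty}((\widetilde{\Phi_{\theta}})'_{\beta})$. Crucially, the topology $\theta$ delivered by that theorem is generated by seminorms drawn from $\mathscr{P}(\Phi,\tau)$ (equivalently, $\tau_{S}$-dominated ones), so each associated Hilbert space $\Phi_{p_{n}}$ is separable; consequently $\widetilde{\Phi_{\theta}}$ is separable, complete, pseudo-metrizable and ultrabornological, exactly as in the nuclear setting.

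With tightness in hand, I would run the sub-subsequence argument verbatim. Because $\widetilde{\Phi_{\theta}}$ enjoys the four structural properties just listed, the space $D_{\infty}((\widetilde{\Phi_{\theta}})'_{\beta})$ is completely regular with metrizable compact subsets (the analogue of Proposition \ref{propProperSkoroSpaceWeakerCHT} together with the discussion in Section \ref{sectionUNSPINFTY}), so by \cite{BogachevMT}, Theorem 8.6.7, uniform tightness implies that every subsequence of $(\mu_{n})$ contains a further weakly convergent subsequence. Given two weakly convergent sub-subsequences with limits $\nu_{1}$ and $\nu_{2}$, the hypothesis that every finite-dimensional projection $\mu_{n}\circ(\Pi^{\phi_{1},\dots,\phi_{m}}_{t_{1},\dots,t_{m}})^{-1}$ converges weakly on $\R^{m}$ forces $\nu_{1}$ and $\nu_{2}$ to agree on all cylinder sets, hence on $\mathcal{C}(D_{\infty}((\widetilde{\Phi_{\theta}})'_{\beta}))$. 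Since $\mathcal{C}=\mathcal{B}$ on this space (Lemma \ref{lemmCylinAndBorelSigmaAlgCoincide} and its extension to $D_{\infty}$ recalled in Section \ref{sectionUNSPINFTY}), I would conclude $\nu_{1}=\nu_{2}=:\mu$. As every subsequence then has a further subsequence converging weakly to the common limit $\mu$, Theorem 2.6 in \cite{Billingsley} yields $\mu_{n}\Rightarrow\mu$ in $\goth{M}^{1}(D_{\infty}((\widetilde{\Phi_{\theta}})'_{\beta}))$.

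To finish, I would transfer the convergence to $D_{\infty}(\Phi'_{\beta})$. The canonical inclusion $j_{\theta}$ from $(\widetilde{\Phi_{\theta}})'_{\beta}$ into $\Phi'_{\beta}$ is linear and continuous, and therefore induces a continuous map between the corresponding Skorokhod spaces; hence $f\circ j_{\theta}$ is bounded and continuous on $D_{\infty}((\widetilde{\Phi_{\theta}})'_{\beta})$ whenever $f\in C_{b}(D_{\infty}(\Phi'_{\beta}))$, and the weak convergence on $D_{\infty}((\widetilde{\Phi_{\theta}})'_{\beta})$ pushes forward to $\mu_{n}\Rightarrow\mu$ in $\goth{M}^{1}(D_{\infty}(\Phi'_{\beta}))$. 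The same identity pins down $\mu$ as the unique such limit.

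The step requiring the most care — and the only genuine departure from the nuclear argument — is verifying that the structural facts about $D_{\infty}((\widetilde{\Phi_{\theta}})'_{\beta})$ (metrizability of compacts, the coincidence $\mathcal{C}=\mathcal{B}$, complete regularity) survive when $\Phi$ is merely locally convex. I expect the resolution to be precisely the observation made above: these facts depend only on $\widetilde{\Phi_{\theta}}$ being separable, complete, pseudo-metrizable and ultrabornological, and separability is exactly what membership of the generating seminorms in $\mathscr{P}(\Phi,\tau)$ secures, since a Hilbert-Schmidt map out of a separable $\Phi_{q}$ has dense separable image in $\Phi_{p}$, making $\Phi_{p}$ separable. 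This is why the Sazonov topology is the correct surrogate for nuclearity and why no further hypotheses on $\Phi$ are required.
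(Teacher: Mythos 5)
Your proposal is correct and follows exactly the route the paper intends: the paper's own ``proof'' is just the instruction to rerun the argument of Theorem \ref{theoWeakConveMeasures} with Theorem \ref{theoUniformTightnessInftylocalConvex} supplying the tightness step, which is precisely what you do. Your closing observation --- that separability of the Hilbert spaces $\Phi_{p}$ (hence the structural properties of $\widetilde{\Phi_{\theta}}$ needed for metrizability of compacts and $\mathcal{C}=\mathcal{B}$) is secured by the definition of $\mathscr{P}(\Phi,\tau)$ rather than by nuclearity --- is exactly the point the paper leaves implicit, and you have justified it correctly.
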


\begin{rema}
It should be clear to the reader that we can also prove a version of Theorem \ref{theoWeakConvProcesses} in the  context of locally convex spaces provided that we assume that the linear maps $\{ X^{n}_{t}: t \in [0,T], n \in \N \}$ from $\Phi$ into $L^{0} \ProbSpace$ are equicontinuous at zero on $(\Phi,\tau_{S})$. We leave the details to the reader.  
\end{rema}

% A generalized version of the Minlos theorem (see \cite{SmolyanovFomin}, Theorem IV.1) says that if $M$ is a set of cylindrical measures on $\Phi'$, and if the set $\widetilde{M}$ of its Fourier transforms is continuous at zero on $(\Phi,\tau_{S})$, then $\mu$ has a Radon extension on $\Phi'_{\beta}$.   In this context, $\tau_{S}= \tau$ if and only if $H$ is finite dimensional.

If $H$ is a separable Hilbert space, recall that the Sazonov topology $\tau_{S}$ on $H$ is generated by the seminorms on $H$ of the form $p_{S}(\phi)=\norm{S\phi}_{H}$ $\forall \, \phi \in H$, where $S$ runs over the totally of all Hilbert-Schmidt operators on $H$. This definition of Sazonov topology on $H$ is equivalent to the topology introduced above for general locally convex spaces (see \cite{SmolyanovFomin}). Since the conclusions of Theorem  \ref{theoMinlosSkorokhodSpace} remain valid if we assume equicontinuity with respect to the Sazonov topology, we obtain the following generalization of Sazonov's theorem on the Skorokhod space:  

\begin{theo} \label{theoSazonovSkorokhodHilbertSpace}
Let $H$ be a separable Hilbert space and let $\mu$ be a cylindrical probability measure on $D_{T}(H)$. Suppose that the family of its Fourier transforms $(\widehat{\mu}_{t}: t \in [0,T])$ is equicontinuous at zero on $(H, \tau_{S})$. Then there exists a Radon probability measure $\nu$ on $D_{T}(H)$ that is an extension of $\mu$. 
\end{theo}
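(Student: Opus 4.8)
The plan is to view $H$ as the strong dual of a locally convex space and then run the argument of the Minlos theorem on Skorokhod space (Theorem~\ref{theoMinlosSkorokhodSpace}) with the nuclearity hypothesis replaced by equicontinuity with respect to the Sazonov topology. Since $H$ is a separable Hilbert space I would identify it with its strong dual via the Riesz representation theorem, so that $\Phi=H$ is a (Hausdorff) locally convex space, $\Phi'_{\beta}=H'_{\beta}\cong H$, and $D_{T}(H)=D_{T}(\Phi'_{\beta})$, with canonical pairing $x(t)[\phi]=\inner{x(t)}{\phi}$. Under this identification $\mu$ is a cylindrical probability measure on $D_{T}(\Phi'_{\beta})$ whose time-projection Fourier transforms $(\widehat{\mu}_{t}:t\in[0,T])$ are equicontinuous at zero on $(H,\tau_{S})$, which is exactly the input the Sazonov-type argument requires.

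First I would produce a canonical object to regularize: by Theorem~\ref{theoExistCanoCylRandVariable} there is a cylindrical random variable $X$ in $D_{T}(\Phi'_{\beta})$, defined on some $\ProbSpace$, whose cylindrical distribution is $\mu$. The induced cylindrical process $\phi\mapsto\{X(\phi)(t)\}_{t\in[0,T]}$ automatically satisfies the c\`{a}dl\`{a}g condition (1) of the regularization theorem. To obtain the equicontinuity condition (2), I would invoke the Kallenberg inequality used in the proof of Theorem~\ref{theoMinlosSkorokhodSpace},
\[
\Prob(\abs{X(\phi)(t)}\geq\epsilon)\leq\frac{\epsilon}{2}\int_{-2/\epsilon}^{2/\epsilon}\bigl(1-\widehat{\mu}_{t}(s\phi)\bigr)\,ds,
\]
which shows that $\tau_{S}$-equicontinuity at zero of $(\widehat{\mu}_{t}:t\in[0,T])$ forces $\tau_{S}$-equicontinuity at zero of the family of maps $\phi\mapsto X(\phi)(t)$, $t\in[0,T]$, from $\Phi$ into $L^{0}\ProbSpace$.

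The decisive step is to apply the Sazonov-topology version of the regularization theorem (the extension of Theorem~\ref{theoRegularizationTheoremCadlagContinuousVersion} recorded in \cite{FonsecaMora:ReguCyliLCS} and already used in Theorem~\ref{theoUniformTightnessInftylocalConvex}), which is what replaces nuclearity here. This yields a weaker countably Hilbertian topology $\theta$ on $\Phi$ and a $D_{T}((\widetilde{\Phi_{\theta}})'_{\beta})$-valued random variable $Y$ with $Y[\phi]=X(\phi)$ $\Prob$-a.e.\ for every $\phi\in\Phi$. Since $D_{T}((\widetilde{\Phi_{\theta}})'_{\beta})$ is Souslin (Proposition~\ref{propProperSkoroSpaceWeakerCHT}(1)), the law $\nu$ of $Y$ is Radon there, and continuity of the inclusion into $D_{T}(\Phi'_{\beta})$ (Proposition~\ref{propProperSkoroSpaceWeakerCHT}(3)) makes $\nu$ a Radon probability measure on $D_{T}(\Phi'_{\beta})=D_{T}(H)$. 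Finally, exactly as at the end of the proof of Theorem~\ref{theoMinlosSkorokhodSpace}, evaluating $\nu$ on a cylinder $(\Pi^{\phi_{1},\dots,\phi_{m}}_{t_{1},\dots,t_{m}})^{-1}(A)$ and using $Y(t_{i})[\phi_{i}]=X(\phi_{i})(t_{i})$ shows $\nu$ agrees with $\mu$ on $\mathcal{C}(D_{T}(\Phi'_{\beta}))$, so $\nu$ extends $\mu$.

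The main obstacle is justifying this regularization step without nuclearity. In the proof of Theorem~\ref{theoMinlosSkorokhodSpace} the nuclear hypothesis enters only through Theorem~\ref{theoRegularizationTheoremCadlagContinuousVersion}, whose construction of $\theta$ and of the Hilbert-Schmidt embeddings rests on the fact that every continuous Hilbertian seminorm on a nuclear space is dominated by one inducing a Hilbert-Schmidt inclusion. On $H$ this fails, but $\tau_{S}$-equicontinuity at zero lets one choose the controlling seminorm $p$ inside $\mathscr{P}(H,\tau)$, so that by the very definition of the Sazonov topology there is a continuous Hilbertian $q\geq p$ with $\Phi_{q}$ separable and $i_{p,q}$ Hilbert-Schmidt. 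This is precisely the structure the nuclear argument exploited; once it is in hand, every estimate in the proof of Theorem~\ref{theoMinlosSkorokhodSpace} transcribes verbatim with $\tau$-continuity replaced by $\tau_{S}$-continuity, so the genuine content of the proof is simply verifying that the Sazonov topology furnishes the Hilbert-Schmidt factorization that nuclearity provided for free.
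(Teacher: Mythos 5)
Your proposal is correct and follows essentially the same route as the paper: the paper proves this theorem by observing that the conclusions of Theorem~\ref{theoMinlosSkorokhodSpace} remain valid when nuclearity is replaced by equicontinuity with respect to the Sazonov topology (justified via the $\tau_{S}$-version of the regularization theorem from \cite{FonsecaMora:ReguCyliLCS}, as explained in the proof of Theorem~\ref{theoUniformTightnessInftylocalConvex}), and then identifying $H$ with $\Phi'_{\beta}$ for $\Phi=H$. Your write-up merely makes explicit the details --- the Kallenberg inequality step, the Hilbert--Schmidt factorization supplied by $\mathscr{P}(H,\tau)$, and the cylinder-set comparison --- that the paper leaves implicit.
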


Now, if we use Theorems \ref{theoUniformTightnessInftylocalConvex} and \ref{theoWeakConveMeasureslocallyConvex} we obtain the following generalization of L\'{e}vy's continuity theorem on the Skorokhod space: 
 
\begin{theo} \label{theoLevyTheoremSkorokhodHilbertSpace} 
Let $H$ be a separable Hilbert space. For a sequence $(\mu_{n}: n \in \N)$ of probability measures on $D_{\infty}(H)$ to be uniformly tight it is sufficient that the following conditions are satisfied:
\begin{enumerate}
\item  For all $T>0$, the family of Fourier transforms $(\widehat{\mu}_{n,t}: t \in [0,T], n \in \N)$ is equicontinuous at zero on $(H, \tau_{S})$.
\item  For each  $h \in H$, the sequence $(\mu_{n} \circ \Pi_{h}^{-1}: n \in \N)$ of probability measures on $D_{\infty}(\R)$ is uniformly tight.    
\end{enumerate}  
If moreover $\forall \, m \in \N$, $h_{1}, \dots, h_{m} \in H$, $t_{1}, \dots, t_{m} \geq 0$, the sequence $ \mu_{n} \circ \left( \Pi^{h_{1}, \dots, h_{m}}_{t_{1}, \dots, t_{m}} \right)^{-1}$ converges weakly on $\R^{m}$,
 then $\mu_{n} \Rightarrow \mu$ in $\goth{M}^{1}(D_{\infty}(H))$.  
\end{theo}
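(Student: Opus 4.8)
The plan is to deduce this theorem as a direct specialization of Theorems \ref{theoUniformTightnessInftylocalConvex} and \ref{theoWeakConveMeasureslocallyConvex} to the locally convex space $(\Phi,\tau)=(H,\|\cdot\|_{H})$. The first step is to invoke the Riesz representation theorem to identify $H$ with its topological dual $H'$, under which the strong dual $\Phi'_{\beta}=H'_{\beta}$ becomes $H$ equipped with its norm topology; thus $D_{\infty}(\Phi'_{\beta})=D_{\infty}(H)$ and, for $h,h_{1},\dots,h_{m}\in H$, the canonical pairing $f[h]$ entering the space projection $\Pi_{h}$ and the space-time projection $\Pi^{h_{1},\dots,h_{m}}_{t_{1},\dots,t_{m}}$ is exactly the inner product $\inner{f}{h}$. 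Consequently the projections appearing in the statement of the present theorem coincide with those of the general theory developed in Sect. \ref{sectionApplSkoLocalConv}.

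Next I would check that the notion of equicontinuity at zero on $(H,\tau_{S})$ in condition \emph{(1)} matches the one used in Theorems \ref{theoUniformTightnessInftylocalConvex} and \ref{theoWeakConveMeasureslocallyConvex}. As recalled immediately before the statement, on a separable Hilbert space the Sazonov topology generated by the seminorms $p_{S}(\phi)=\norm{S\phi}$, with $S$ running over the Hilbert--Schmidt operators on $H$, coincides with the Sazonov topology $\tau_{S}$ defined in Sect. \ref{sectionApplSkoLocalConv} for a general locally convex space (see \cite{SmolyanovFomin}). Hence no translation between the two notions is required, and condition \emph{(1)} here is literally condition \emph{(1)} of Theorem \ref{theoUniformTightnessInftylocalConvex} for $\Phi=H$.

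With these identifications in place, the sufficiency of \emph{(1)} and \emph{(2)} for uniform tightness of $(\mu_{n}:n\in\N)$ on $D_{\infty}(H)$ is exactly the conclusion of Theorem \ref{theoUniformTightnessInftylocalConvex} applied to $\Phi=H$: one obtains a weaker countably Hilbertian topology $\theta$ on $H$ for which $(\mu_{n}:n\in\N)$ is uniformly tight on $D_{\infty}((\widetilde{\Phi_{\theta}})'_{\beta})$, and a fortiori uniformly tight on $D_{\infty}(H)$. Adjoining the further assumption that each finite-dimensional space-time projection $\mu_{n}\circ(\Pi^{h_{1},\dots,h_{m}}_{t_{1},\dots,t_{m}})^{-1}$ converges weakly on $\R^{m}$, the weak convergence $\mu_{n}\Rightarrow\mu$ in $\goth{M}^{1}(D_{\infty}(H))$ is precisely the conclusion of Theorem \ref{theoWeakConveMeasureslocallyConvex} applied to $\Phi=H$.

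Since $H$ is its own strong dual, there is no genuinely new analytic content beyond that already contained in the locally convex results; the only point requiring care is the bookkeeping of the dualities together with the verification that the Hilbert-space Sazonov topology agrees with the general one, which I expect to be the main (though minor) obstacle. Note in particular that neither barrelledness nor nuclearity of $H$ is invoked, consistently with the fact that Theorems \ref{theoUniformTightnessInftylocalConvex} and \ref{theoWeakConveMeasureslocallyConvex} only require $(\Phi,\tau)$ to be a Hausdorff locally convex space.
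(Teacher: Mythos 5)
Your proposal is correct and coincides with the paper's own argument: Theorem \ref{theoLevyTheoremSkorokhodHilbertSpace} is obtained exactly by specializing Theorems \ref{theoUniformTightnessInftylocalConvex} and \ref{theoWeakConveMeasureslocallyConvex} to $\Phi=H$, after identifying $H'_{\beta}$ with $H$ via the Riesz representation and noting (as the paper does just before Theorem \ref{theoSazonovSkorokhodHilbertSpace}) that the Hilbert--Schmidt description of the Sazonov topology agrees with the general locally convex one. Your bookkeeping of the dualities and projections is precisely the content the paper leaves implicit.
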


We hope that Theorems \ref{theoSazonovSkorokhodHilbertSpace} and \ref{theoLevyTheoremSkorokhodHilbertSpace} can serve as  useful sufficient conditions for tightness and weak convergence on the Skorokhod space of a Hilbert space; this as a contribution to the literature on the Skorokhod space of a metric space (see e.g. \cite{Billingsley, EthierKurtz, Skorokhod:1956}). 

\section{Remarks and Examples} \label{sectionExampCommen}

All throughout this paper we have considered random variables and probability measures on the dual of a nuclear space. Most of our results have been formulated under the context of a general nuclear space $\Phi$, but in certain occasions we have assumed some additional structure on $\Phi$, for example that $\Phi$ is barrelled or ultrabornological. The purpose of  this section is to provide concrete examples of nuclear spaces satisfying these conditions and to attribute to each of them the properties used throughout the paper and thus the results valid for them. Some additional remarks are given and comparison of our results with those obtained by other authors.

\subsection{The case of ultrabornological and barrelled nuclear spaces.}

\textbf{Examples:} There are many examples of spaces of functions widely used in analysis  that are nuclear spaces. For example, it is known (see e.g. \cite{Pietsch, Schaefer, Treves}) that the spaces functions $\mathscr{E}_{K} \defeq \mathcal{C}^{\infty}(K)$ ($K$: compact subset of $\R^{d}$), $\mathscr{E}\defeq \mathcal{C}^{\infty}(\R^{d})$, the rapidly decreasing functions $\mathscr{S}(\R^{d})$, and the space of harmonic functions $\mathcal{H}(U)$ ($U$: open subset of $\R^{d}$),  are all  examples of Fr\'{e}chet nuclear spaces. Their (strong) dual spaces $\mathscr{E}'_{K}$, $\mathscr{E}'$, $\mathscr{S}'(\R^{d})$, $\mathcal{H}'(U)$, are also nuclear spaces.
On the other hand, the space of test functions $\mathscr{D}(U) \defeq \mathcal{C}_{c}^{\infty}(U)$ ($U$: open subset of $\R^{d}$), the space of polynomials $\mathcal{P}_{n}$ in $n$-variables, the space of real-valued sequences $\R^{\N}$ (with direct sum topology) are strict inductive limits of Fr\'{e}chet nuclear spaces (hence they are also nuclear). The space of distributions  $\mathscr{D}'(U)$  ($U$: open subset of $\R^{d}$) is also nuclear. All the above are examples of (complete) ultrabornological nuclear spaces.   

\textbf{Compactness on Skorokhod space:} In Theorem \ref{theoCharacCompacSets} we have proved  that for a barrelled nuclear space $\Phi$ and $A \subseteq D_{T}(\Phi'_{\beta})$, compactness of finite dimensional projections of $A$ implies compactness. The same  characterization  but for the case when $\Phi$ is Fr\'{e}chet nuclear space was proved by Mitoma in \cite{Mitoma:1983}. We are not aware of any further extension of the result of Mitoma to any other classes of nuclear spaces. Moreover, since any ultrabornological space is also barrelled, the examples given in the previous paragraph are all examples of barrelled nuclear spaces for which Theorem \ref{theoCharacCompacSets} is valid. 

Now, it is important to mention that there are examples of (complete) nuclear spaces that are not barrelled and for which compactness of its finite-dimensional projections do not implies compactness (and hence the characterization given in Theorem \ref{theoCharacCompacSets} fails). To provide an example, let $E$ be an infinite dimensional Banach space. Then, it is known that $E$ is the strong dual of some complete nuclear space $\Phi$ (see Corollary 1 of Theorem IV.4.3.3 in  \cite{HogbeNlendMoscatelli}). Note that $\Phi$ cannot be barrelled because if that were the case the space $\Phi$ would be reflexible and  then $\Phi=E'_{\beta}$ (see \cite{Schaefer}, Theorems III.7.2 and IV.5.6). But this equality is impossible because in that case $\Phi$ would be both nuclear and Banach, and this is only possible if $\Phi$ is finite dimensional (see \cite{Pietsch}, Theorem 4.4.14) so we get a contradiction. 
Now, let $B$ denote the closed unit ball in $E$ and $\delta >0$. Let $A=A(B, \delta)$ denote the collection of all $x \in D_{T}(E)=D_{T}(\Phi'_{\beta})$ that are of the form $x(t) = f_{j}$ for $t \in [t_{j-i},t_{j})$, $j = 1,\dots,m$, where $t_{j}-t_{j-1}> \delta$, $f_{j} \in B$, $t_{0}=0$, $t_{m}=T$.
Observe that for each $\phi \in \Phi$, the set $\Pi_{\phi}(A)=\{ x[\phi]: x \in A\}$ is relatively compact in $D_{T}(\R)$. This is a consequence of the fact that $B[\phi]=\{ f[\phi]: f \in B\}$ is relatively compact in $\R$ and Lemma 2.4.1 in \cite{KallianpurXiong}. However, the set $A$ cannot be relatively compact in $D_{T}(E)$ because in that case the closure of the set $\{ x(t): t \in [0,T], x \in A\}$, that is equal to $B$, must be compact (see the proof of Proposition 1.6.vi) in \cite{Jakubowski:1986}); but this is imposible as $E$ is infinite dimensional. 

\textbf{Tightness and weak convergence on Skorokhod space:} In Theorems \ref{theoTightMeasuresUltrab} and \ref{theoUniformTightnessUltrabInfty} we have proved that if $\Phi$ is an ultrabornological space, tightness of a family of probability measures on the Skorokhod space in $\Phi'_{\beta}$ is equivalent to tightness of its one-dimensional projections on the Skorokhod space in $\R$. Under the same hypothesis on $\Phi$, in Theorem \ref{theoWeakConveMeasuresUltrab} we proved that for weak convergence of a sequence of probability measures on the Skorokhod space in $\Phi'_{\beta}$ it is sufficient to have tightness of one-dimensional projections on $D_{\infty}(\R)$ and weak convergence of time-space finite-dimensional projections. The analogous result for weak convergence of a sequence of $\Phi'_{\beta}$-valued c\`{a}dl\`{a}g processes is given in Theorem \ref{theoWeakConvProcessesUltrab}. 

The above results were firstly proved by Mitoma in \cite{Mitoma:1983} for the case when $\Phi$ is a Fr\'{e}chet nuclear space. These results were later extended by Fouque in \cite{Fouque:1984} to the case when $\Phi$ is a countable inductive limit of nuclear  Fr\'{e}chet spaces.
However, if the space $\Phi$ is a Fr\'{e}chet space or the countable inductive limit of Fr\'{e}chet spaces, then $\Phi$ is an ultrabornological space (see \cite{Jarchow}, Corollaries 4 and 5, Section 13.1, p.273). Hence, Theorems \ref{theoTightMeasuresUltrab}, \ref{theoUniformTightnessUltrabInfty},  \ref{theoWeakConveMeasuresUltrab} and \ref{theoWeakConvProcessesUltrab} generalize the results obtained by Mitoma and Fouque under the same hypothesis. Moreover, our results work for classes of ultrabornological nuclear spaces that are not covered by Mitoma and Fouque's assumptions, for example the space of real-analytic functions $\mathcal{A}(V)$ ($V$: closed subset of $\R^{d}$, see \cite{HogbeNlendMoscatelli}), or the space $\R^{\aleph}$ equipped with the product topology (where $\aleph$ denote the cardinal of the continuum, see \cite{Kalton:1970}).

\subsection{The case of general nuclear spaces.}

\textbf{Examples:} There are interesting examples of nuclear spaces that are not (or might not be) ultrabornological or barrelled. As examples we have the space $\R^{D}$ equipped with its product topology ($D$ arbitrary set, see \cite{Treves}) and the nuclear K\"{o}the sequence spaces (see \cite{HogbeNlendMoscatelli, Jarchow}). 

%Also, in \cite{Berner:1977} is shown that there exists a topology on the space of holomorphic functions $\mathcal{H}(V)$ defined on a domain $V$ in the space of distributions $\mathscr{D}'(\R^{d})$, and such that this topology is nuclear but not barrelled (hence not ultrabornological).

Many more examples can be generated if one consider spaces of functions defined on nuclear spaces or spaces whose strong dual is nuclear (also called dual nuclear spaces). One have for example the space of holomorphic functions defined on a  (quasi-)complete dual nuclear space (e.g. $\mathcal{H}(\mathscr{D}'(\R^{d}))$, see \cite{Dineen}), the space of continuous linear operators between a semi-reflexive dual nuclear space into a nuclear space (e.g. the space $\mathscr{D}'(U; \R^{\N}) \defeq \mathcal{L}(\mathcal{C}^{\infty}_{c}(U),\R^{\N})$ of distributions with values in $\R^{\N}$, $U \subseteq \R^{n}$ open), and tensor products of arbitrary nuclear spaces (e.g. the space of holomorphic functions with values in the space of distributions $\mathcal{H}(U;\mathscr{D}'(\R^{d})) \cong \mathcal{H}(U) \widehat{\otimes} \mathscr{D}'(\R^{d})$, $U \subseteq \R^{n}$ open); for references see \cite{Schaefer, Treves}. A particular example of a not-barrelled nuclear space that has been used on the study of 
weak convergence of a sequence of $\mathscr{S}'(\R^{d})$-valued c\`{a}dl\`{a}g processes is the space 
$\mathscr{D}(\R) \widehat{\otimes} \mathscr{S}(\R^{d}) $ (see \cite{BojdeckiGorostizaRamaswamy:1986} for details). 

\textbf{Tightness and weak convergence on Skorokhod space:} 
Apart from studying tightness and weak convergence of probability measures and random variables in the Skorokhod space $D_{T}(\Phi'_{\beta})$, in this article we have also introduced the more general concepts of cylindrical measures and cylindrical random variables in $D_{T}(\Phi'_{\beta})$. We are not aware of any other work that makes a systematic study of these objects. In particular, for the case of $\Phi$ being a nuclear space we have shown that there is an analogue of the regularization theorem (Theorem \ref{theoRegulaTheoSkorokSpace}) and of Minlos theorem (Theorem \ref{theoMinlosSkorokhodSpace}) for cylindrical random variables and cylindrical measures in $D_{T}(\Phi'_{\beta})$.  

Now, if $\Phi$ is a general nuclear space, in Theorems \ref{theoThighnessMeasures} and  \ref{theoUniformTightnessInfty} we have shown that for a family of probability measures on $D_{T}(\Phi'_{\beta})$ (or on $D_{\infty}(\Phi'_{\beta})$), the equicontinuity of Fourier transforms and  tightness of its one-dimensional projections on the Skorokhod space in $\R$ is sufficient for tightness on $D_{T}(\Phi'_{\beta})$ (or on $D_{\infty}(\Phi'_{\beta})$), and necessary if $\Phi$ is barrelled. There are two important comments we want to make on this result. First, the methodology used in our proofs shows that the use of the \emph{Baire category argument}, that was a common factor on the arguments of Mitoma and Fouque (see the proof of Lemma 3.3 in \cite{Mitoma:1983}, and of Lemme II.1 and Lemme IV.1 in \cite{Fouque:1984}), can be replaced in a very efficient way by the use of a weaker countably Hilbertian topology. This was a fundamental step in our arguments to extend the results of Mitoma to general nuclear spaces. Observe that equicontinuity of Fourier transforms played a fundamental role to show this existence of the weaker countably Hilbertian topology. 

Second, our condition of equicontinuity of Fourier transforms seems to be a less demanding condition than the \emph{compact containment condition} introduced by Jakubowski in \cite{Jakubowski:1986}. This condition, together with 
weak tightness with respect to some separating family, constitutes the main characterization for tightness on the Skorokhod space for a completely regular space whose compact subsets are metrizable. The more recent work of Kouritzin \cite{Kouritzin:2016} replaces the weak tightness assumption for some modulus of continuity conditions, but the compact containment condition is still present. Apart from the argument given above, observe that unlike the results in \cite{Jakubowski:1986} and \cite{Kouritzin:2016}, in Theorems \ref{theoThighnessMeasures} and \ref{theoUniformTightnessInftylocalConvex} we we do not need to assume that the compact subsets of $\Phi'_{\beta}$ are metrizable. The importance of this remark is in the fact that not for every locally convex space its compact subsets are metrizable. Necessary and sufficient conditions for the above to be true are given in \cite{FerrandoEtAl:2006}. 

In Theorem \ref{theoWeakConveMeasures} we introduced an analogue of Levy's continuity theorem for weak convergence of probability measures on $D_{\infty}(\Phi'_{\beta})$. The corresponding result for weak convergence of (cylindrical) processes in $\Phi'_{\beta}$ is given in Theorem  \ref{theoWeakConvProcesses}. We have illustrated the usefulness of these results in Theorem \ref{theoWeakConvLevyProcess} where we provide sufficient conditions for the weak convergence of a sequence of L\'{e}vy processes in $D_{\infty}(\Phi'_{\beta})$. We hope we could apply our results to the study of weak convergence of SPDEs taking values in the dual of a nuclear space defined by the author in \cite{FonsecaMora:2018-1}; this is done for example in \cite{FernandezGorostiza:1992, PerezAbreuTudor:1992} for  SPDEs in the dual of a Fr\'{e}chet nuclear space. 

\subsection*{Acknowledgements}
The author thank the referees for their helpful and insightful comments that contributed greatly  to improve the presentation of this manuscript.  
Thanks also to the The University of Costa Rica for providing financial support through the grant 820-B6-202	``Ecuaciones diferenciales en derivadas parciales en espacios de dimensi\'{o}n infinita''.


\begin{thebibliography}{HD}



\bibitem{AndersenEtAl:2015}  L. N. Andersen,  S. Asmussen, P. Glynn and M. Pihlsgard, \emph{L\'{e}vy processes with two-sided reflection}. In: L\'{e}vy Matters V, Lecture Notes Math. 2149, Springer (2015), 67--182. 

\bibitem{Billingsley} P. Billingsley, \emph{Convergence of Probability Measures}, Wiley Series in Probability and Statistics, Wiley, second edition (1999). 

\bibitem{BojdeckiGorostizaRamaswamy:1986} T. Bojdecki, L.G.  Gorostiza and S. Ramaswamy, \emph{Convergence of $\mathscr{S}'$-valued processes and space-time random fields}, J. Funct. Anal., {66}, no. 1 (1986), 21--41.


\bibitem{BogachevMT}  V. I. Bogachev, \emph{Measure Theory}, Springer, Vol. II. (2007).
 
\bibitem{DaleckyFomin} Y. L. Dalecky and S. V.  Fomin, \emph{Measure and Differential Equations in Infinite-Dimensional Space},  Mathematics and Its Applications 76, Springer Science+Business Media (1991).

\bibitem{DawsonVaillancourtWang:2000} D. A. Dawson, J. Vaillancourt and H. Wang,  \emph{Stochastic partial differential equations for a class of interacting measure-valued diffusions}, Ann. Inst. H. Poincar\'{e} Probab. Statist., {36}, no.2 (2000), 167--180. 

\bibitem{DeMasiGalvesLocherbachPresutti:2015}  A. De Masi, A. Galves, E. L\"{o}cherbach and E. Presutti, \emph{Hydrodynamic limit for interacting neurons}, J. Stat. Phys., {158}, no. 4 (2015), 866--902.

\bibitem{Dettweiler:1976} E. Dettweiler, \emph{Grenzwerts\"{a}tze f\"{u}r Wahrscheinlichkeitsma{\ss}e auf Badrikianschen R\"{a}umen}, Zeit. Wahrsch. Verw. Gebiete, {34} (1976), 285--311.

\bibitem{Dineen} S. Dineen, \emph{Complex Analysis in Locally Convex Spaces}, North-Holland Mathematics Studies, Vol.57, Notas de Mat., No. 83, North-Holland (1981).  

\bibitem{EthierKurtz} S. N. Ethier and T. G. Kurtz, \emph{ Markov Processes: Characterization and Convergence}, Wiley Series in Probability and Statistics, John Wiley \& Sons  (1986).

\bibitem{FernandezGorostiza:1992} B. Fern\'{a}ndez and L. G. Gorostiza, \emph{Stability of a class of transformations of distribution-valued processes and stochastic evolution equations}, J. Theor. Probab., {5}, no.4 (1992), 661--678.

\bibitem{FerrandoEtAl:2006} J. C. Ferrando, J. Kakol and M. L\'{o}pez-Pellicer,   \emph{Necessary and sufficient conditions for compact sets to be metrisable}, Bull. Austral. Math. Soc., {74} (2006), 7--13.

\bibitem{FonsecaMora:2018} C. A.  Fonseca-Mora, \emph{Existence of continuous and c\`{a}dl\`{a}g versions for cylindrical processes in the dual of a nuclear space}, J. Theor. Probab., {31}, no.2 (2018), 867--894.  

\bibitem{FonsecaMora:2018-1} C. A. Fonseca-Mora, \emph{Stochastic integration and stochastic PDEs driven by jumps on the dual of a nuclear space},  Stoch PDE: Anal Comp,  {6}, no.4 (2018), 618--689.

\bibitem{FonsecaMora:Levy} C. A. Fonseca-Mora,  \emph{L\'{e}vy Processes and infinitely divisible measures in the dual of a nuclear space}, to appear in J. Theor. Probab., arXiv:1701.06630. 

\bibitem{FonsecaMora:ReguCyliLCS} C. A. Fonseca-Mora,  \emph{Regularization of cylindrical processes in locally convex spaces}, arXiv: 1905.11223. 

\bibitem{Fouque:1984} J-P. Fouque, \emph{La convergence en loi pour les processus \`{a} valeurs dans un espace nucl\'{e}aire}, Ann. Inst. H. Poincar\'{e}, {20}, no.3  (1984), 225--245. 

\bibitem{HaademProske:2014} S. Haadem and F. Proske, \emph{On the construction and Malliavin differentiability of solutions of L\'{e}vy noise driven SDE's with singular coefficients}, J. Funct. Anal., {266}, no.8 (2014), 5321--5359.

\bibitem{HogbeNlendMoscatelli}  H. Hogbe-Nlend and V. Moscatelli, \emph{Nuclear and Conuclear Spaces}, North-Holland Math. Studies, Vol.52, Notas de Mat., No. 79, North-Holland (1981).

\bibitem{Jakubowski:1986}  A. Jakubowski, \emph{On the Skorokhod topology} Annales de l'I.H.P. Probabilit\'{e}s et statistiques, {22}, no.3 (1986), 263--285.

\bibitem{Jarchow} H. Jarchow, \emph{Locally Convex Spaces}, Mathematische Leitf\"{a}den, Springer (1981). 

\bibitem{Kallenberg}  O. Kallenberg, \emph{Foundations of Modern Probability}, Probability and Its Applications, Springer, second edition (2002). 

\bibitem{KallianpurXiong} G. Kallianpur and J. Xiong, \emph{Stochastic Differential Equations in Infinite Dimensional Spaces}, Lecture Notes-Monograph Series, Institute of Mathematical Statistics (1995).

\bibitem{Kalton:1970}  N. J. Kalton, \emph{A barrelled space without a basis}, Proc. Amer. Math. Soc., {26}, no.2 (1970), 465--466. 
     
\bibitem{KaspiRamanan:2013} H. Kaspi and K.  Ramanan, \emph{SPDE limits of many-server queues}, Ann. Appl. Probab., {23}, no.1 (2013), 145--229.

\bibitem{KatsarasBenekas:1995}  A. K. Katsaras and V. Benekas, \emph{Sequential convergence in topological vector spaces},  Georgian Math. J, {2}, no.2 (1995), 151--164.     

\bibitem{Kouritzin:2016} M. A. Kouritzin, \emph{On tightness of probability measures on Skorokhod spaces}, Trans. Amer. Math. Soc. {368}, no.8 (2016), 5675--5700.

\bibitem{KotheI} G. K\"{o}the, \emph{Topological Vector Spaces I}, Grundlehren der mathematischen Wissenschaften, Springer (1969).

\bibitem{Mitoma:1983} I. Mitoma, \emph{Tightness of Probabilities On $C(\lbrack 0, 1 \rbrack; \mathscr{Y}')$ and $D(\lbrack 0, 1 \rbrack; \mathscr{Y}')$}, Ann. Probab., {11}, no. 4 (1983), 989--999. 
     
\bibitem{NariciBeckenstein}  L. Narici and E. Beckenstein, \emph{Topological Vector Spaces}, Pure and Applied Mathematics, CRC Press, second edition (2011).  

\bibitem{Parthasarathy} K. R. Parthasarathy, \emph{Probability Measures on Metric Spaces}, New York Academic Press (1967).

\bibitem{PerezAbreuTudor:1992} V. P\'{e}rez-Abreu and C. Tudor,  \emph{Regularity and convergence of stochastic convolutions in duals of nuclear Fréchet spaces},  J. Mult. Anal., {43}, no. 2 (1992), 185--199.

\bibitem{PerezCarrerasBonet} P. P\'{e}rez-Carreras and J. Bonet,  \emph{Barrelled Locally Convex Spaces}, North-Holland, Amsterdam (1987).

\bibitem{Pietsch}  A. Pietsch, \emph{Nuclear Locally Convex Spaces}, Ergebnisse der Mathematikund ihrer Grenzgebiete, Springer (1972). 

\bibitem{ReedTalreja:2015}  J. Reed and R. Talreja, \emph{Distribution-valued heavy-traffic limits for the G/GI/$\infty$ queue}, Ann. Appl. Probab., {25}, no. 3 (2015), 1420--1474.
    
\bibitem{Schaefer} H. Schaefer, \emph{Topological Vector Spaces}, Graduate Texts in Mathematics, Springer, second edition (1999).

\bibitem{SchwartzRM} L. Schwartz, \emph{Radon Measures on Arbitrary Topological Spaces and Cylindrical Measures}, Tata Institute of Fundamental Research Studies in Mathematics, Oxford University Press (1973). 

\bibitem{Skorokhod:1956} A. V. Skorokhod,  \emph{Limit theorems for stochastic processes} (in Russian), Teor. Veroyatnost. i Primenen, {1} (1956), 289--319.

\bibitem{SmolyanovFomin} O. G. Smolyanov and S. V. Fomin, 
\emph{Measures on linear topological spaces} (in Russian), Uspekhi Mat. Nauk, {31}, no.4 (1976), 3--56; English transl. Russian Math. Surveys, {31}, no.4 (1976), 1--53. 

\bibitem{Treves} F. Tr\`{e}ves, \emph{Topological Vector Spaces, Distributions and Kernels}, Pure and Applied Mathematics, Academic Press (1967).

\end{thebibliography}
\end{document}